\newcommand{\N}{\mathbf N}
\newcommand{\ra}{\rightarrow}
\DeclareOldFontCommand{\rm}{\normalfont\rmfamily}{\mathrm}
\newtheorem{theorem}{Theorem}[section]
\newtheorem{corollary}[theorem]{Corollary}
\newtheorem{proposition}[theorem]{Proposition}
\newtheorem*{corollary*}{Corollary}
\theoremstyle{definition}
\newtheorem{definition}{Definition}[section]
\title{Metrics and stabilization in one parameter persistence\thanks{First author was funded by VR and G\"oran Gustafsson foundation. Second author was partly supported by a collaboration agreement between the University of Aberdeen and EPFL.}}
\author{Wojciech Chach\'olski\thanks{Mathematics Department, KTH, S-10044 Stockholm, Sweden.
		(\texttt{wojtek@kth.se}).}
	\and Henri Riihim\"aki \thanks{Mathematics Department, University of Aberdeen, Aberdeen AB243UE, Scotland, UK.
		(\texttt{henri.riihimaki@abdn.ac.uk}).}}
\begin{document}

\maketitle

\begin{abstract}
	We propose the use of persistent homology in a supervised way. We believe homological persistence is fundamentally not about decomposition theorems but a central role is played by a choice of metrics. Choosing a pseudometric between persistent vector spaces leads to a  model. Fitting this model is what we believe supervised homological persistence is.  We develop theory behind constructing such models and  we give evidence of the usefulness of this approach in concrete data analysis tasks. 
\end{abstract}

\section{Sense of geometry}\label{adfgafdghsfgh}
During the last decade  there has been a considerable increase in research focused on persistent homology. This has been fueled by an explosion of applications ranging from neuroscience \cite{neurons}, to vehicle tracking \cite{vehicles}, and the characterization of nanomaterials \cite{nanomat}, testifying to usefulness of homology to understand spaces described  by measurements and samplings. All these applications of persistent homology  have been   in principle exploratory in nature with some elements of learning based on persistent diagrams.  In fact, persistent homology can be regarded as  a generalization   to higher homologies   of clustering methods ($H_0$ persistence) that have been the core of exploratory data analysis for a long time.   Although exploratory tools are  important, the main research front in modern data science has shifted  from exploratory to supervised learning, due to even more spectacular applications of machine learning methods.  

Our aim in this paper is to explain how to use persistent homology  in a supervised way, allowing to optimize over various models for the observed homological information.
The focus is on studying the space of stable translations  from  homological  information into information that can be analysed through more basic operations such as counting and integration  enabling the use of statistical tools to its  outcomes. We show how  a pseudometric on the set $\text{Tame}([0,\infty),\text{Vec}_K)$ of tame $[0,\infty)$-parametrized $K$ vector spaces (see Section~\ref{sdfgdfghfgh}),  which is  a natural place where homological invariants of  data live, leads to  a stable translation. Every such pseudometric  gives  therefore a  model  for extracting information from persistent homology.
Fitting  this  model to the training data is what in our approach persistence based  supervised learning  is. In Section~\ref{contour_usage}  of the paper we  illustrate   that this strategy can indeed  lead to  improvements in  classification tasks. We consider two such tasks: distinguishing between random point processes on a unit square generated according to different distributions, and distinguishing between activities of ascending and descending stairs of 7 people based on the activity monitoring PAMAP2  data obtained from~\cite{PAMAP}.
By choosing a different model, in the first case the overall averaged accuracy improves from $73\%$ to $78\%$ and in the second case from $60\%$ to $65\%$. Our goal for this paper however is not   to  benchmark   our approach. 
Our goal is to  present  the proof of concept of the key ideas, explain their mathematical background, and  indicate  that  they   can lead to improvements  in analysing data. Discussing the effectiveness of this approach is planned for a sequel to this article.

Our models are built using a process  called  {\bf hierarchical stabilization of the rank} which assures the necessary stability requirement. It  builds  on work presented in~\cite{OliverThesis},~\cite{OliverWojtek}, and~\cite{MR3735858}.
The input to the process is a pseudometric $d$ on  the set $\text{Tame}([0,\infty),\text{Vec}_K)$. The output is a Lipschitz-continuous function $\widehat{\text{rank}}_d\colon \text{Tame}([0,\infty),\text{Vec}_K)\to {\mathcal M}$ where ${\mathcal M}$ is the space of Lebesgue measurable functions $[0,\infty)\to [0,\infty)$ in which   probability and statistical methods are well developed.  We think about this function as the  model associated to the pseudometric $d$. In this framework (supervised) persistence  analysis  is about identifying these pseudometrics $d$ for which structural properties of  the (training)  data are  reflected by the geometry of its image in ${\mathcal M}$ through the  function $\widehat{\text{rank}}_d$.  The strategy of looking for appropriate pseudometrics can only   work if we are able to parametrize explicitly  a rich subspace of  pseudometrics on $\text{Tame}([0,\infty),\text{Vec}_K)$.  Such  rich parametrizations  would enable the  use of for example stochastic gradient  descent techniques to search through the parameters for  suitable pseudometrics, which we intend to explore  in the mentioned sequel to this paper.  
This article builds on our discovery that such parametrizations are indeed possible using Lebesgue measurable functions $[0,\infty)\to (0,\infty)$ with positive values referred to as densities (see~\ref{sdfbhndfjhgh} and~\ref{shift type}). 

Parametrizing models for persistence analysis by pseudometrics should not be surprising. Discovering appropriate metrics and units to measure physical phenomena is essential in understanding these phenomena. Comparison and interpretation of observations should depend on the phenomena and the experiments they came from and not  simply just on their values. Different phenomena might require different comparison metrics.  We should not restrict ourselves to only  bottleneck or Wasserstein distances  to compare outcomes of persistence analysis of  diverse data sets obtained from a variety of different experiments. We should be able to choose metrics that fit particular experiments. Our goal is to  present  mathematical foundations of how to do it for outcomes of persistent homology. This fits also well with many recent studies (\cite{Bendich2016}, \cite{Hiraoka_amorphous}, \cite{XiaWei_protein}, \cite{Stolz2016}, \cite{XiaWei_CryoEM}) which challenge the traditional view in persistence that bars with long lifespans are of importance and smaller bars are to be considered as noise. These  studies show that also shorter  bars and their appearance in the filtration might carry important information. For example in \cite{Hiraoka_amorphous} the authors find that observed diffraction peaks of amorphous silica glass relate to small scale loops in the atomic configurations. It thus depends on the analysis at hand what is to be taken as  noise and we advocate that emphasizing the meaningful features is a question of choosing an appropriate metric on $\text{Tame}([0,\infty),\text{Vec}_K)$. 

\section{Hierarchical stabilization}\label{sec:hier_stab}

All the proofs of the propositions presented  in this  section are placed in  Appendix~\ref{adfdfsgfhd}.

A discrete invariant is  a function $R\colon T\to {\mathbf N}$ with values in the set of natural numbers
${\mathbf N}=\{0,1,\ldots\}$. We think about $T$ as a collection of data sets or objects that represent them. If $T$ consists of finite metric spaces for example, such an invariant might assign to a metric space in  $T$ the number of clusters obtained by applying some clustering algorithm. Our method of converting such a discrete invariant into a stable one, which we call {\bf hierarchical stabilization}, requires a choice of a pseudometric $d$ on $T$ and this is  the key step central to persistence analysis in our approach. Recall that a pseudometric is a function $d\colon T\times T\to [0,\infty]$ satisfying reflexivity $d(X,X)=0$, symmetry $d(X,Y)=d(Y,X)$ and triangular inequality $d(X,Y)+d(Y,Z)\geq d(X,Z)$ for any $X$, $Y$, and $Z$ in $T$. Here $[0,\infty]$ denotes the extended set of non-negative real numbers  including $\infty$ with the  standard arithmetic and order relation.  Its subset of real numbers is denoted by $[0,\infty)$.

Once a pseudometric $d$ on $T$ is chosen, for  $X$ in $T$, we  define $\widehat{R}_d(X)\colon [ 0,\infty)\to [ 0,\infty)$ to be the function given by the formula:
\[\widehat{R}_d(X)(t):=\text{min}\{R(Y)\ |\ d(X,Y)\leq t\}.\]
The number $\widehat{R}_d(X)(t)$ is the minimum among all the values $R$ takes on the   disk $B_d(X,t):=\{Y\ |\ d(X,Y)\leq  t\}$ around $X$ with radius $t$ with respect to the pseudometric $d$. 
This function is non-increasing with values in natural numbers and hence   Lebesgue measurable. Furthermore
there is $t$  such that, for  $s\geq t$ in $[0,\infty)$, the equality $\widehat{R}_d(X)(t)=\widehat{R}_d(X)(s)$ holds. 
This value  $\widehat{R}_d(X)(t)$ is  called the {\bf limit} of $\widehat{R}_d(X)$ and is denoted  by
$\text{lim}(\widehat{R}_d(X))$. Recall that  ${\mathcal M}$  denotes  the set of  Lebesgue measurable functions $[0,\infty)\to [0,\infty)$.

\begin{definition}\label{asdghsfghdfg}
	{\bf Hierarchical stabilization} of $R\colon T\to {\mathbf N}$ with respect to a pseudometric $d$ on $T$  is the  function $\widehat{R}_{d}\colon T\to{\mathcal M}$ that maps $X$ in $T$ to $\widehat{R}_d(X)$.
\end{definition}

The range ${\mathcal M}$ of $\widehat{R}_d$ has a much richer geometry than the set of natural numbers,  the range of $R$. For example, ${\mathcal M}$ has many interesting pseudometrics, among them  the standard \(L_p\)-metric (for $p\geq 1$) and  a so called   \textbf{interleaving} metric \(d_{\bowtie}\):
\begin{align*}
	L_p (f,g)&:=\left( \int_0^\infty|f(t)-g(t)|^p dt \right) ^{\frac{1}{p}}\\
	S& :=\{\epsilon \, | \, f(t) \ge g(t+\epsilon) \ \text{and} \ g(t) \ge f(t+\epsilon) \ \text{for all} \ t \in [0,\infty)\}\\
	d_{\bowtie} (f,g)&:=\begin{cases}
		\text{inf}(S), & \text{if $S$ is non-empty}\\
		\infty, & \text{otherwise.}
	\end{cases}
\end{align*}

The hierarchical stabilization satisfies the following Lipschitz properties:
\begin{proposition}\label{prop:Lipschitz}
	Let $d$ be a pseudometric on $T$, $R\colon T\to {\mathbf N}$  a function, and $p\geq 1$ a real number. Then,
	for  $X$ and $Y$ in $T$:
	\begin{enumerate}
		\item 
		$d(X,Y)\geq d_{\bowtie}\left(\widehat{R}_d(X),\widehat{R}_d(Y)\right).$
		\item  $c\, d(X,Y)^{\frac{1}{p}}\geq L_p \left(\widehat{R}_d(X),\widehat{R}_d(Y)\right),$
		where $c=\text{\rm max}\{\widehat{R}_d(X)(0),\widehat{R}_d(Y)(0)\} .$
	\end{enumerate}
\end{proposition}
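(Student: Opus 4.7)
My plan is to prove the interleaving bound (1) directly from the definition and the triangle inequality, and then derive the $L_p$ bound (2) from (1) via a layer cake decomposition.

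For part (1), let $\epsilon = d(X,Y)$ and fix $t \geq 0$. Since $R$ takes values in $\mathbf{N}$, the infimum defining $\widehat{R}_d(X)(t)$ is attained, say by some $Z$ with $d(X,Z)\leq t$. The triangle inequality gives $d(Y,Z)\leq t+\epsilon$, so $Z$ lies in the ball $B_d(Y, t+\epsilon)$ and therefore $\widehat{R}_d(Y)(t+\epsilon)\leq R(Z)=\widehat{R}_d(X)(t)$. By symmetry the opposite inequality also holds, so $\epsilon\in S$ and (1) follows.

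For part (2), write $f=\widehat{R}_d(X)$, $g=\widehat{R}_d(Y)$; these are non-increasing functions taking values in $\mathbf{N}$, bounded by $f(0)$ and $g(0)$ respectively, hence by $c$. Using integer-valued layer cakes, for each $n\geq 1$ set $A_n=\{t\,:\,f(t)\geq n\}$ and $B_n=\{t\,:\,g(t)\geq n\}$. Because $f$ and $g$ are non-increasing, these are intervals of the form $[0,a_n]$ (or $[0,a_n)$) and $[0,b_n]$ respectively, and for every $t$ one has $|f(t)-g(t)|=\sum_{n\geq 1}|\mathbf{1}_{A_n}(t)-\mathbf{1}_{B_n}(t)|$. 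The interleaving from (1) translates into the set inclusions $A_n\subseteq B_n+\epsilon$ and $B_n\subseteq A_n+\epsilon$, which force $|a_n-b_n|\leq\epsilon$ and hence $\mu(A_n\triangle B_n)\leq\epsilon$. Since $A_n,B_n$ are empty for $n>c$, summing over $n$ yields
\[
\int_0^\infty |f(t)-g(t)|\,dt \;=\; \sum_{n\geq 1}\mu(A_n\triangle B_n) \;\leq\; c\epsilon.
\]

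To pass from $L_1$ to $L_p$, I use the pointwise bound $|f(t)-g(t)|\leq c$, which gives $|f(t)-g(t)|^p\leq c^{p-1}|f(t)-g(t)|$. Integrating and using the $L_1$ estimate,
\[
L_p(f,g)^p \;=\; \int_0^\infty |f(t)-g(t)|^p\,dt \;\leq\; c^{p-1}\cdot c\epsilon \;=\; c^p\epsilon,
\]
so $L_p(f,g)\leq c\epsilon^{1/p}=c\,d(X,Y)^{1/p}$, as required.

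The only slightly delicate step is identifying the right layer cake identity and verifying that the interleaving inequality on functions transfers cleanly to a Hausdorff-type estimate on the intervals $A_n$, $B_n$; everything else is a routine application of the triangle inequality and the fact that $\max(f(0),g(0))$ is a uniform pointwise bound. The rest of the argument is purely bookkeeping.
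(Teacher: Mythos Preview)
Your proof is correct. Part (1) is essentially the paper's argument: both use the triangle inequality to obtain the ball inclusion $B_d(X,t)\subset B_d(Y,t+\epsilon)$ and read off the interleaving inequality (you phrase it via a realizing minimizer, the paper via the inclusion itself; you should also mention the trivial case $d(X,Y)=\infty$, which the paper disposes of first).

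For part (2) the two routes genuinely differ. The paper proves, for arbitrary non-increasing $f,g$, the general inequality $L_p(f,g)\le \max\{f(0),g(0)\}\,d_{\bowtie}(f,g)^{1/p}$ and then invokes (1). Its mechanism is to sandwich both $f$ and $g$ between $h=\max(f,g)$ and the shift $h_\epsilon(t)=h(t+\epsilon)$, and then bound $L_p(h,h_\epsilon)^p$ via the elementary inequality $(b-a)^p\le b^p-a^p$ and a telescoping integral, giving $\int_0^\infty (h-h_\epsilon)^p\le \int_0^\epsilon h^p\le h(0)^p\epsilon$. Your argument instead exploits that $f,g$ are $\mathbf{N}$-valued: the layer-cake identity $|f-g|=\sum_{n\ge 1}|\mathbf{1}_{A_n}-\mathbf{1}_{B_n}|$ reduces the $L_1$ estimate to the endpoint bound $|a_n-b_n|\le\epsilon$ for at most $c$ nonempty levels, and then the crude pointwise bound $|f-g|\le c$ upgrades $L_1$ to $L_p$. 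The paper's approach is more general (no integrality assumption) and treats all $p\ge 1$ uniformly in a single computation; yours is more combinatorial and makes the role of $c$ as the number of active layers completely explicit.
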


We  think about the hierarchical stabilization as a process of converting a discrete invariant $R\colon T\to{\mathbf N}$ into a stable invariant $\widehat{R}_d\colon T\to{\mathcal M}$ whose  values are in  a space in which rich probability and statistical methods are  well developed. We take advantage of this in our examples in Sections~\ref{point_processes} and~\ref{FASGDFSG}. Different  pseudometrics  on $T$ lead to different invariants. In our framework persistence analysis  is about  identifying pseudometrics  on $T$ for which the associated  invariants  
reflect structural properties of $T$. The expectation  is  that some of these properties  should be reflected by  the  geometry of the image of $\widehat{R}_d$ in ${\mathcal M}$ described   by the  $L_p$  or interleaving metrics if  
an appropriate pseudometric  $d$ on $T$ is chosen.  We  refer to  the function 
$\widehat{R}_d\colon T\to{\mathcal M}$ also as the {\bf hierarchical stabilization model} of $T$ associated to the invariant $R$ and the pseudometric $d$.

In general there is a loss of information as  $\widehat{R}_d\colon T\to{\mathcal M}$  may map objects that we do not  intend to identify to the same function.  For  retaining more   information we are going to consider families of pseudometrics on $T$ and the induced   stabilizations. Let ${\mathcal M}_2$ denote the set of measurable functions of the form $[0,\infty)^2\to [0,\infty)$.

\begin{definition}\label{asfgdsfghs}
	Let $R\colon T\to  {\mathbf N}$ be a function and 
	$\{d_\alpha\}_{\alpha\in[0,\infty]}$ a sequence of pseudometrics  on  $T$ indexed by $[0,\infty]$.
	\begin{enumerate}
		\item  The sequence  $\{d_\alpha\}_{\alpha\in[0,\infty]}$ is called 
		{\bf non-decreasing} if $d_\alpha(X,Y)\leq d_\beta(X,Y)$
		for all $\alpha<\beta$ in $[0,\infty]$ and $X$, $Y$ in $T$.
		\item For $X$ in $T$, $\overline{R}(X)\colon [0,\infty)^2\to [0,\infty)$ is a function  defined as follows:
		\[\overline{R}(X)(\alpha,t):=\widehat{R}_{d_{\alpha}}(X)(t).\]
		\item    The sequence $\{d_\alpha\}_{\alpha\in[0,\infty]}$ is called  {\bf admissible} for $R$ if  
		$\overline{R}(X)\colon [0,\infty)^2\to [0,\infty)$  is Lebesgue measurable for all $X$ in $T$.
		\item Assume  $\{d_\alpha\}_{\alpha\in[0,\infty]}$  is  admissible for $R$.
		Then  the  function
		$\overline{R}\colon T\to {\mathcal M}_2$,
		mapping  $X$ in $T$ to $\overline{R}(X)\colon [0,\infty)^2\to [0,\infty)$, is called
		the {\bf hierarchical stabilization} of $R$ along the  sequence $\{d_\alpha\}_{\alpha\in[0,\infty]}$.
	\end{enumerate}
\end{definition}

Non-decreasing   sequences are  key examples of universally admissible sequences:

\begin{proposition}\label{sgdsths}

	A non-decreasing  sequence  $\{d_\alpha\}_{\alpha\in[0,\infty]}$ of pseudometrics on $T$ is admissible for any 
	$R\colon T\to  {\mathbf N}$.
\end{proposition}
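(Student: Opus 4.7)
The plan is to show, for each fixed $X \in T$, that the function $\overline{R}(X)\colon [0,\infty)^2 \to [0,\infty)$ is jointly Lebesgue measurable. My first observation is that $\overline{R}(X)$ takes values in $\mathbf{N}$, since $Y = X$ always satisfies $d_\alpha(X,X) = 0 \leq t$, so the minimum defining $\widehat{R}_{d_\alpha}(X)(t)$ is attained. This reduces measurability to checking that each sublevel set $B_n := \{(\alpha,t) \in [0,\infty)^2 : \overline{R}(X)(\alpha,t) \leq n\}$ is Lebesgue measurable for $n \in \mathbf{N}$.

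The next step is to exploit joint monotonicity of $\overline{R}(X)$. Non-increasingness in $t$ is already stated in the excerpt. For monotonicity in $\alpha$, the non-decreasing hypothesis $d_\alpha \leq d_\beta$ (for $\alpha \leq \beta$) gives the inclusion of disks $B_{d_\beta}(X,t) \subseteq B_{d_\alpha}(X,t)$, so the minimum of $R$ over a larger set can only decrease, hence $\overline{R}(X)(\alpha,t) \leq \overline{R}(X)(\beta,t)$. Consequently $B_n$ is downward closed in $\alpha$ and upward closed in $t$: if $(\alpha,t) \in B_n$, $\alpha' \leq \alpha$, and $t' \geq t$, then $(\alpha',t') \in B_n$.

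The key step is a sandwich argument. Define $\phi_n\colon [0,\infty) \to [0,\infty]$ by $\phi_n(\alpha) := \inf\{t \geq 0 : (\alpha,t) \in B_n\}$, with $\inf\emptyset = \infty$. The downward-closedness of $B_n$ in $\alpha$ implies that $\phi_n$ is non-decreasing, and upward-closedness of each $\alpha$-slice yields
\[
\{(\alpha,t) : t > \phi_n(\alpha)\} \;\subseteq\; B_n \;\subseteq\; \{(\alpha,t) : t \geq \phi_n(\alpha)\}.
\]
Both sandwiching sets are Borel, because monotone functions are Borel measurable. Their difference lies in the graph $\{(\alpha,t) : t = \phi_n(\alpha)\}$ of $\phi_n$, which is Borel and has two-dimensional Lebesgue measure zero (each vertical slice is a single point, so Fubini yields measure zero). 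Therefore $B_n$ agrees with a Borel set up to a null set, and is Lebesgue measurable. Varying $n$ over $\mathbf{N}$ completes the proof.

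The only delicate point I anticipate is that $\widehat{R}_{d_\alpha}(X)(\cdot)$ is not obviously right-continuous in $t$, so the infimum in the definition of $\phi_n$ may or may not be attained on a given $\alpha$-slice; the sandwich formulation above sidesteps this by absorbing the ambiguity into the null graph of $\phi_n$, rather than trying to identify $B_n$ with a single epigraph.
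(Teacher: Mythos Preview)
Your proof is correct and takes a genuinely different route from the paper's.

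The paper argues by approximation: for each $\epsilon>0$ it replaces $d_\alpha$ by the step sequence $d_\alpha^\epsilon:=d_{\lfloor\alpha/\epsilon\rfloor\epsilon}$, observes that the resulting $\overline{R}_\epsilon(X)$ is constant on a grid of half-open rectangles and hence measurable, and then obtains $\overline{R}(X)$ as a pointwise limit of the $\overline{R}_\epsilon(X)$ as $\epsilon\to 0$. Your argument instead exploits the joint monotonicity of $\overline{R}(X)$ directly: non-decreasing in $\alpha$ and non-increasing in $t$ force each sublevel set $B_n$ to be a ``staircase'' region bounded by the graph of a monotone function $\phi_n$, and you sandwich $B_n$ between the strict and non-strict epigraphs of $\phi_n$, which differ only on a null set. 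What your approach buys is that it avoids having to verify the pointwise limit $\overline{R}_\epsilon(X)\to\overline{R}(X)$ (which, strictly speaking, uses left-continuity in $\alpha$ that the paper leaves implicit); what the paper's approach buys is that it is a one-line appeal to the standard fact that limits of measurable functions are measurable, without needing the graph/Fubini argument. Both are short and elementary; yours is arguably the more self-contained of the two.
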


Similarly to ${\mathcal M}$,   we consider following  pseudometrics on  ${\mathcal M}_2$, the  normalized $L_p$
(for $p\geq 1$) and the  interleaving metrics:
\begin{align*}
	\widehat{L_p}(f,g)&:=\lim_{a\to\infty}\frac{1}{a}\int_0^a \left(\int_0^\infty|f(\alpha,t)-g(\alpha,t)|^p dt \right)^{\frac{1}{p}} d\alpha  \\
	S& :=\left\{\epsilon \, | \, 
	\begin{array}{c}f(\alpha,t) \ge g(\alpha,t+\epsilon)\\
		g(\alpha,t) \ge f(\alpha,t+\epsilon) 
	\end{array}
	\ \text{for} \ (\alpha,t) \in [0,\infty)\times [0 ,\infty)\right\}\\
	d_{\bowtie} (f,g)&:=\begin{cases}
		\text{inf}(S), & \text{if $S$ is non-empty}\\
		\infty, & \text{otherwise.}
	\end{cases}
\end{align*}

The key parameter in the  hierarchical stabilization  is the choice of a pseudometric. It turns out that with respect to this key parameter  the  hierarchical stabilization along a sequence of pseudometrics $\{d_\alpha\}_{\alpha\in[0,\infty]}$ is also stable. Here is one manifestation of this stability:

\begin{proposition}\label{afgsfhg}
	Let $R\colon T\to {\mathbf N}$ be a function.   Assume $\{d_\alpha\}_{\alpha\in[0,\infty]}$ is a non-decreasing sequence  of pseudometrics on $T$.
	Let $\overline{R}\colon T\to {\mathcal M}_2$ be  the hierarchical stabilization of $R$   along this sequence.
	Then, for  $X$ and $Y$ in $T$:
	\begin{enumerate}
		\item 
		$d_{\infty}(X,Y)\geq d_{\bowtie}\left(\overline{R}(X),\overline{R}(Y)\right).$
		\item $c\, d_{\infty}(X,Y)^{\frac{1}{p}}\geq \widehat{L_p} \left(\overline{R}(X),\overline{R}(Y)\right),$
		where $c=\text{\rm max}\{\widehat{R}_{d_\infty}(X)(0),\widehat{R}_{d_\infty}(Y)(0)\}.$
	\end{enumerate}
\end{proposition}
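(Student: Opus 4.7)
The plan is to reduce both inequalities to Proposition~\ref{prop:Lipschitz} applied slice-by-slice in the parameter $\alpha$, then exploit the non-decreasing hypothesis to upgrade pointwise-in-$\alpha$ bounds to uniform ones.

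For (1), fix any $\alpha\in[0,\infty)$. Non-decreasingness gives $d_{\alpha}(X,Y)\leq d_{\infty}(X,Y)$, and Proposition~\ref{prop:Lipschitz}(1) applied to the pseudometric $d_{\alpha}$ yields
\[
d_{\infty}(X,Y)\ \geq\ d_{\alpha}(X,Y)\ \geq\ d_{\bowtie}\!\left(\widehat{R}_{d_{\alpha}}(X),\widehat{R}_{d_{\alpha}}(Y)\right).
\]
Unwinding the definition of $d_{\bowtie}$ on $\mathcal{M}$, this says that $\epsilon:=d_{\infty}(X,Y)$ is a valid interleaving parameter for the $\alpha$-slices $\overline{R}(X)(\alpha,\cdot)$ and $\overline{R}(Y)(\alpha,\cdot)$. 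Since this holds for every $\alpha$, the same $\epsilon$ lies in the set $S$ appearing in the interleaving metric on $\mathcal{M}_2$, so $d_{\bowtie}(\overline{R}(X),\overline{R}(Y))\leq d_{\infty}(X,Y)$.

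For (2), the additional ingredient is to control the constant $c_{\alpha}:=\max\{\widehat{R}_{d_{\alpha}}(X)(0),\widehat{R}_{d_{\alpha}}(Y)(0)\}$ as $\alpha$ varies. Since $\widehat{R}_{d_{\alpha}}(X)(0)=\min\{R(Z)\mid d_{\alpha}(X,Z)=0\}$, and the set $\{Z\mid d_{\alpha}(X,Z)=0\}$ shrinks as $\alpha$ grows (by non-decreasingness), the minimum can only grow, so $c_{\alpha}$ is non-decreasing in $\alpha$; in particular $c_{\alpha}\leq c:=c_{\infty}$. Applying Proposition~\ref{prop:Lipschitz}(2) to each $d_{\alpha}$ gives
\[
L_p\!\left(\widehat{R}_{d_{\alpha}}(X),\widehat{R}_{d_{\alpha}}(Y)\right)\ \leq\ c_{\alpha}\,d_{\alpha}(X,Y)^{1/p}\ \leq\ c\,d_{\infty}(X,Y)^{1/p},
\]
a bound uniform in $\alpha$. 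Averaging over $[0,a]$ and sending $a\to\infty$ in the definition of $\widehat{L_p}$ preserves this constant bound, giving $\widehat{L_p}(\overline{R}(X),\overline{R}(Y))\leq c\,d_{\infty}(X,Y)^{1/p}$. Measurability of $\overline{R}(X)$, which is needed for the integral to make sense, is already guaranteed by Proposition~\ref{sgdsths}.

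The substance of the argument is very light once Proposition~\ref{prop:Lipschitz} is in hand; the only non-automatic point is the monotonicity of $\alpha\mapsto c_{\alpha}$, which keeps the constant in (2) independent of the averaging variable. Edge cases where $d_{\infty}(X,Y)=\infty$ are trivial, and the symmetry of the argument in $X$ and $Y$ is inherited directly from Proposition~\ref{prop:Lipschitz}.
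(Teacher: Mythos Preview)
Your proof is correct and follows essentially the same approach as the paper's: the paper remarks that the argument is identical to Proposition~\ref{prop:Lipschitz} and illustrates (1) by the direct ball-inclusion $B_{d_\alpha}(Y,t)\subset B_{d_\alpha}(X,t+\epsilon)$ for each $\alpha$, which is exactly what your slice-by-slice invocation of Proposition~\ref{prop:Lipschitz} unpacks to. One tiny point worth making explicit in your part (1): from $d_{\bowtie}\bigl(\widehat{R}_{d_\alpha}(X),\widehat{R}_{d_\alpha}(Y)\bigr)\le\epsilon$ you conclude that $\epsilon$ itself lies in the interleaving set $S$, which uses that $S$ is upward closed for non-increasing functions (or, equivalently, that the proof of Proposition~\ref{prop:Lipschitz}(1) actually shows $d(X,Y)\in S$, not merely $\inf S\le d(X,Y)$).
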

The hierarchical stabilization process along   a non-decreasing  sequence   of pseudometrics on $T$   converts a discrete invariant $R\colon T\to {\mathbf N}$ into a stable invariant $\overline{R}\colon T\to {\mathcal M}_2$. We can now state our  key definition:

\begin{definition}\label{sdfgdghdgjn}
	Let $T$ be a category with its set of objects also denoted by  $T$.  A sequence of pseudometrics  $\{d_\alpha\}_{\alpha\in[0,\infty]}$  on $T$ is called {\bf ample} for  $R\colon T\to {\mathbf N}$ if  it is admissible  for $R$, and
	the hierarchical stabilization 
	$\overline{R}\colon T\to {\mathcal M}_2$  along this sequence has the following property:
	$X$ and $Y$ in $T$  are isomorphic if and only if $\overline{R}(X)=\overline{R}(Y)$.
\end{definition}

Let $T$ be a category. By definition  ample for $R\colon T\to {\mathbf N}$  sequences of pseudometrics on $T$ lead to stable embeddings of  isomorphism classes of objects in $T$  into ${\mathcal M}_2$. By choosing such an embedding, we can think about  $T$ as a subspace of  ${\mathcal M}_2$  in which rich probability and statistical methods are  well developed. Different  sequences which are ample for $R$ give different embeddings. Our expectation is that by choosing an appropriate such embedding, structural properties of $T$   relevant to a  data analysis task could be reflected by  the  geometry of the image of $\overline{R}$ in ${\mathcal M}_2$ described   by the  $L_p$  or interleaving metrics. The focus of this article is on $T$ being  the category $\text{Tame}([0,\infty),\text{Vec}_K)$ of tame $[0,\infty)$-parametrized $K$ vector spaces (also called one parameter tame persistence modules)  and 
$R\colon \text{Tame}([0,\infty),\text{Vec}_K)\to{\mathbf N}$ being the  minimal number of  generators or equivalently the number of bars in the bar decomposition.

\section{Formal homological persistence}\label{sdfgdfghfgh}
A typical input for  persistent  homology is a  collection $\{(X_i,d_i)\}$ of finite  pseudometric spaces. Since homological  persistence is about looking for homological features, each element of such  data needs to be transformed  into an object reflecting these features  more directly. The first step in this transformation is to convert 
the metric information into a simplicial complex parametrized by the poset  $[0,\infty)$ whose elements are referred to as scales in this context. In this paper  the Vietoris-Rips construction is used  for that purpose, which   at a scale $a\in [0,\infty)$  is a simplicial complex $\text{VR}_a(X_i,d_i)$ whose $k$-simplices are subsets of $X_i$ consisting of $k+1$ points which are pairwise at most distance $a$ from each other. For example, two elements  $p_1$ and $p_2$ in $X_i$ connect to an edge, or a 1-simplex, when $d_i(p_1,p_2) \le a$. If $a \le b$, then  $  \text{VR}_a(X_i,d_i) \subseteq \text{VR}_b(X_i,d_i) $. The obtained  filtration indexed by the poset $[0,\infty)$  is denoted by the  symbol $\text{VR}(X_i,d_i)$.

Note that $\text{VR}(X_i,d_i)$ does not add or forget any information about $(X_i,d_i)$ and hence is as complicated as  the metric space itself. Simplification is therefore necessary and this is the purpose of the second step in this transformation in which the $n$-th homology (with coefficients in a chosen field $K$) is applied to the simplicial complexes  in the Vietoris-Rips filtration. This results in  a functor $H_n (\text{VR}(X_i,d_i),K)\colon [0,\infty)\to \text{Vec}_K$  given by the linear functions  $ H_n (\text{VR}_a(X_i,d_i),K) \ra H_n (\text{VR}_b(X_i,d_i),K)$    induced by the inclusions  $\text{VR}_a(X_i,d_i) \allowbreak \subseteq \text{VR}_b(X_i,d_i)$ for  $a \leq b$  in $[0,\infty)$. A functor  of the form $V\colon  [0,\infty)\to \text{Vec}_K$ is also called a  {\bf $[0,\infty)$-parametrized $K$ vector space} and  the linear function $V_{a\leq b}\colon V_a\to V_b$, for  $a \leq b$  in $[0,\infty)$, is called a   {\bf transition function}. 

The functor  $H_n (\text{VR}(X_i,d_i),K)$ is not  an arbitrary  $[0,\infty)$-parametrized $K$ vector space. It satisfies additional two properties which  follow from  finiteness of  $X_i$:
\begin{definition}\label{agdshgffd}
	An $[0,\infty)$-parametrized $K$ vector space $V$ is called {\bf tame} if:
	\begin{enumerate}
		\item   the vector space $V_a$ is finite dimensional for   every  $a$ in $[0,\infty)$, 
		\item there are finitely many  $0<t_0<\cdots <t_k$ in $[0,\infty)$  such that, for $a\leq b$ in $[0,\infty)$, a transition function $V_{a\leq b}\colon V_{a}\to V_{b}$  may fail  to be an isomorphism only if  $a<t_i\leq b$ for some $i$. 
	\end{enumerate}
\end{definition}

For example constant functors are tame, in particular the $0$ functor. If $X$ is   finite, then, for any pseudometric $d$ on $X$, the functor  $H_n (\text{VR}(X,d),K)$  is also tame. The  symbol $\text{Tame}([0,\infty),\text{Vec}_K)$  denotes the collection of tame $[0,\infty)$-parametrized $K$ vector spaces. We refer to  the process of assigning to a  finite metric space $(X,d)$ a tame $[0,\infty)$-parametrized $K$ vector space as {\bf formal persistence} , the prominent example being the homology of the  Vietoris-Rips construction   
$ H_n (\text{VR}(X,d),K)$.

\section{The rank}\label{asgdhfyjgdm}
We  recall  how to define, calculate, and interpret the rank of a tame $[0,\infty)$-parametrized $K$ vector space. These are standard  known results,  which are included   since the rank is   the  key discrete invariant  
studied in this paper. The rank is  the number of bars in a bar decomposition. However  to calculate the rank one does  not need a bar decomposition. Calculating the rank is a much easier task than describing a bar decomposition.

\begin{subsection}{Rank of a parametrized vector space}
	Let $V$ be in $\text{Tame}([0,\infty),\text{Vec}_K)$. Choose 
	$0<t_0<\cdots < t_k$  in $[0,\infty)$ such that $V_{a\leq b}$ can fail to be an isomorphism
	only if  $a<t_i\leq b$ for some $i$. Set:
	\[H_0(V):=V_0\oplus \text{coker}(V_{0<t_0})\oplus
	\text{coker}(V_{t_0<t_1})
	\cdots\oplus \text{coker}(V_{t_{k-1}<t_k}).\]
	The vector space $H_0(V)$ is finite dimensional and does not depend on the choice of the sequence $0<t_0<\cdots < t_k$.
	Define:
	\[\text{rank}(V):=\text{dim}(H_0(V)).\]
	If   $V$ and $W$ are tame $[0,\infty)$-parametrized $K$ vector spaces, then their  direct sum
	$V\oplus W$ is also tame and
	$H_0(V\oplus W)$ is isomorphic to $H_0(V)\oplus H_0(W)$. In particular $\text{rank}(V\oplus W)=\text{rank}(V)+\text{rank}(W)$.
	Furthermore $\text{rank}(V)=0$ if and only if   $V=0$.
\end{subsection}
\begin{subsection}{Maps of parametrized vector spaces}
	A map or  a natural transformation between two  $[0,\infty)$-parametrized $K$ vector spaces $V$ and $W$,
	denoted by $f\colon V\to W$, is  a sequence
	$\{f_a\colon V_a\to W_a\}_{a\in [0,\infty)}$  of linear maps  for which the following diagram commutes for every  $a\leq b$ in $[0,\infty)$:
	\[\xymatrix@R=15pt{
		V_a\rrto^-{V_{a\leq b}}\dto_{f_a} & & V_b\dto^{f_b}\\
		W_a\rrto^-{W_{a\leq b}} & & W_b
	}\]
	
	The set of natural transformations between $V$ and $W$ is denoted by $\text{Nat}(V,W)$. Tame $[0,\infty)$-parametrized $K$ vector spaces together  with  maps between them and  the composition given by the parameter-wise composition is  a category which is denoted also by   $\text{Tame}([0,\infty),\text{Vec}_K)$. This is the only category structure we consider on the set $\text{Tame}([0,\infty),\text{Vec}_K)$. In this category   $f\colon V\to W$ is an epimorphism, a monomorphism or an isomorphism if and only if,  for every $a$,   the linear function $f_a$ is, respectively, an epimorphism, a monomorphism or an isomorphism of vector spaces.
\end{subsection}
\begin{subsection}{Bars}\label{sdfgfgshbd}
	Let $s< e$ be in $[0,\infty]$ ($s$ for start and $e$ for end). Note that $e$ might be equal to $\infty$. Define $K(s,e)$ to be the  $[0,\infty)$-parametrized $K$ vector space
	given by:
	\begin{align*}K(s,e)_a & =\begin{cases} K, & \text{if } s\leq a<e\\
			0, &\text{otherwise, } 
		\end{cases}\\
		K(s,e)_{a\leq b}\colon K(s,e)_a\to K(s,e)_b & =\begin{cases}
			\text{id}, & \text{if }\text{dim}\,K(s,e)_a= \text{dim}\,K(s,e)_b=1\\
			0,&\text{otherwise. }
		\end{cases}
	\end{align*}
	We call $K(s,e)$ the {\bf bar starting in $s$ and ending in $e$}. If $e<\infty$, then $K(s,e)$  is called {\bf finite}.
	Note that $K(s,e)$ is  tame and $\text{rank}(K(s,e))=1$. 
	
	Let $s$ be in $[0,\infty)$ and $V$ be  a $[0,\infty)$-parametrized $K$ vector space. The function $\text{Nat}(K(s,\infty), V)\to V_s$,  assigning to a map $f\colon K(s,\infty)\to V$ the element $f_s(1)$ in $V_s$, is a bijection. Thus every element $x$ in  $V_s$ yields a unique map denoted by the same symbol $x\colon K(s,\infty)\to V$ for which $x(1)=x$. Similarly a set of elements  $\{g_i\in V_{s_i}\}_{1\leq i\leq n}$  yields
	a unique map $[g_1\, \cdots\,g_n]\colon \oplus_{i=1}^n K(s_i,\infty) \to V$. Its image is denoted by $\langle g_1\, \cdots\,g_n\rangle$ and called the $[0,\infty)$-parametrized $K$ vector subspace of $V$ {\bf generated by $\{g_i\in V_{s_i}\}_{1\leq i\leq n}$}. It is the smallest subspace of $V$ containing all the $g_i$'s.  If  $\langle g_1\, \cdots\,g_n\rangle=V$, then the set $\{g_i\in V_{s_i}\}_{1\leq i\leq n}$  is  said to {\bf generate} $V$.	
	
	Assume $s< e<\infty$. Then  the function $\text{Nat}(K(s,e), V)\to V_s$, assigning to a map $f\colon K(s,e)\to V$
	the element $f_s(1)$ in $V_s$ is an inclusion. Its image coincides with $\text{ker}(V_{s\leq e}\colon V_s\to V_e)$.
	Thus every element $x$ in $\text{ker}(V_{s\leq e}\colon V_s\to V_e)$ yields a unique  $f\colon K(s,e)\to V$ for which $f_s(1)=x$.  This map is also denoted by the symbol $x$.
\end{subsection}
\begin{subsection}{Monotonicity of the rank}\label{asdgsdfhdfjn}
	Let $V$ be in $\text{Tame}([0,\infty),\text{Vec}_K)$.
	For any choice  of a finite set $\{g_i\in V_{s_i}\}_{1\leq i\leq n}$,  the subspace $\langle g_1\, \cdots\,g_n\rangle$ is tame.
	Furthermore $\text{rank}(\langle g_1\, \cdots\,g_n\rangle)\leq \text{rank}(V)$. We refer to this  property as the {\bf monotonicity of the rank}.
\end{subsection}

\begin{subsection}{Rank and the number of generators}
	Let $f\colon V\to W$ be a map between tame $[0,\infty)$-parametrized $K$ vector spaces.
	Let $0<t_0<\cdots < t_k$ be in $[0,\infty)$ such that $V_{a\leq b}$ or   $W_{a\leq b}$ can fail to be an isomorphism
	only if  $a<t_i\leq b$ for some $i$. For every $i>0$, there is a unique linear map
	$\overline{f_i}\colon \text{coker}(V_{t_{i-1}<t_i})\to \text{coker}(W_{t_{i-1}<t_i})$ making the following
	diagram commutative:
	\[\xymatrix@R=15pt{
		V_{t_{i-1}}\rrto^-{V_{t_{i-1}<t_i}} \dto_{f_{t_{i-1}}}& &V_{t_i}\rto \dto^{f_{t_{i}}} & \text{coker}(V_{t_{i-1}<t_i})
		\dto^{\overline{f_i}}\\
		W_{t_{i-1}}\rrto^-{W_{t_{i-1}<t_i}} && W_{t_i}\rto & \text{coker}(W_{t_{i-1}<t_i})
	}\]
	Define  $H_0(f)\colon H_0(V)\to H_0(W)$ to be $f_0\oplus\bigoplus_{i=1}^k \overline{f_i}$. Again the map
	$H_0(f)$ does not depend on the choice of the sequence $0<t_0<\cdots < t_k$.
	
	It turns out that $f\colon V\to W$ is an epimorphism  if and only if  $H_0(f)\colon H_0(V)\to H_0(W)$ is  surjective. This is the key observation that can be used to show that {\bf   $\text{\rm rank}(V)$ coincides with   the smallest number of elements  generating $V$}.  In particular  tame $[0,\infty)$-parametrized $K$ vector spaces are finitely generated.
\end{subsection}

\begin{subsection}{Ends of elements}\label{afsadfghsf}
	Let   $V$ be a tame  $[0,\infty)$-parametrized $K$ vector space.
	For   $x\not=0$   in $V_{s}$,    consider $L(x):=\{t\in [s,\infty)\ |\ 
	V_{s\leq t}(x)\not =0\}$ and define  {\bf the end of $x$} to be $e(x):=\text{sup}(L(x))$. 
	Note that either $e(x)=\infty$ or $e(x)<\infty$ in which case tameness implies $V_{s\leq e(x)}(x)=0$.
	The induced map $x\colon K(s,e(x))\to V$   is a monomorphism. 
\end{subsection}

\begin{subsection}{Rank and the number of bars}
	Let $V$ be a tame $[0,\infty)$-parametrized $K$ vector space.
	An element 
	$x\not=0$   in $V_{s}$  is defined to  {\bf generate a bar in $V$}, if
	$x\colon K(s,e(x))\to V$ has a retraction, i.e., a map $r\colon V\to  K(s,e(x))$ for which the following composition is the identity:
	\[\xymatrix{
		K(s,e(x))\rto^-{x}\ar@/_15pt/[rr]|{\text{id}} & V\rto^-{r} & K(s,e(x))
	}\]
	In this case $K(s,e(x))$ is a direct summand of $V$.
	
	A sequence  of elements  $\{g_i\in V_{s_i}\}_{1\leq i\leq n}$ is called a {\bf sequence of bar generators } for $V$ if the induced map $[g_1\, \cdots\,g_n]\colon \oplus_{i=1}^n K(s_i,e(g_i)) \to V$ is an isomorphism. The fundamental structure theorem states that  every  tame $[0,\infty)$-parametrized $K$ vector space admits a sequence of bar generators. In particular   every  tame $[0,\infty)$-parametrized $K$ vector space is isomorphic to a direct sum of bars. 
	
	This structure theorem can be proven by induction on the rank. If $\text{rank}(V)=0$, then the empty sequence  is a sequence of bar generators. Let $\text{rank}(V)=n>0$. Assume the statement is true if the rank is smaller than $n$.
	Let $\{x_i\in V_{s_i}\}_{1\leq i\leq n}$  be a set of generators of $V$. Set  $l:=\text{max}\{e(x_1)-s_1,\ldots,e(x_n)-s_n\}$. Among $\{x_i\ |\ e(x_i)-s_i=l\}$ choose $x_j$ 
	for which $e(x_j)$ is the largest.  We claim that $x_j$ generates  a bar. This implies that $V$ is isomorphic to $K\left(s_j,e(x_j)\right)\oplus W$. Thus  $\text{rank}(W)=n-1$ and by induction $W$ admits a sequence of bar generators. 
\end{subsection}

The discrete invariant we focus on in this paper is the rank or equivalently the minimal number of generators, or the number of bar generators:
\[\text{rank}\colon\text{Tame}([0,\infty),\text{Vec}_K)\to {\mathbf N}.\]
Our aim is to study its hierarchical stabilizations as explained in Section~\ref{sec:hier_stab}.
For that we need to   produce pseudometrics on $\text{Tame}([0,\infty),\text{Vec}_K)$.
Noise systems in \cite{MR3735858} were introduced exactly for this purpose. For implementing on a computer so called simple noise systems \cite[Definition 8.2]{OliverWojtek} are  more convenient. The reason is that simple noise systems are parametrized by {\bf  contours} \cite[Theorem 9.6]{OliverWojtek}. Instead of explaining the theory behind noise systems, we focus in this article on discussing only contours and how they can directly be used to define pseudometrics on $\text{Tame}([0,\infty),\text{Vec}_K)$.  Contours are also effective in   calculating induced   hierarchical stabilizations of the rank.  We believe however that it is important  to be aware of the relation between contours and noise systems. 

\section{Contours}\label{generalized_persistence and persistence contours}

\begin{definition} \label{perscont}
	A {\bf  contour} is a function $C: [0,\infty] \times [0,\infty) \ra [0,\infty]$  satisfying  the following inequalities   for all $a$ and  $b$ in $[0,\infty]$ and  $\epsilon$ and $\tau$  in $[0,\infty)$:
	\begin{enumerate}
		\item if $a \leq b$ and $\epsilon \leq \tau$, then $C(a,\epsilon) \leq C(b,\tau)$;
		\item $a \leq C(a,\epsilon)$;
		\item $C(C(a,\epsilon),\tau) \leq C(a,\epsilon + \tau)$.
	\end{enumerate}
	
	Let $C$ be a contour. $C$ is an  {\bf action} if   $C(a,0)= a$ and $C(C(a,\tau),\epsilon)= C(a,\tau+\epsilon)$ for all $a$ in $[0,\infty]$ and
	$\tau$ and $\epsilon$ in $[0,\infty)$.  $C$ is  {\bf closed} if the set $\{\epsilon\in [0,\infty)\  |\  C(a,\epsilon)\geq b\}$ is closed for all $a< b$  in $[0,\infty]$.  $C$  is  {\bf regular} if the following  conditions are satisfied:
	\begin{itemize}
		\item   $C(-,\epsilon )\colon[0,\infty]\to [0,\infty]$ 
		is a monomorphism  for every $\epsilon$  in $[0,\infty)$, 
		\item   $C(a,-)\colon[0,\infty)\to [0,\infty]$ is a monomorphism whose image is $[a,\infty)$
		for every $a$ in $[0,\infty)$.
	\end{itemize}
	
\end{definition}

The first condition of~\ref{perscont} makes sure that a contour  preserves the poset structures.  The second and third  one can be depicted graphically as:
\[\xymatrix{
	\bullet\ar@{|->}@/^10pt/[rr]|-{C(\bullet,\epsilon)}\ar@{|->}@/_16pt/[rrrrrr]|-{C(\bullet,\epsilon+\tau)} &\leq & \bullet \ar@{|->}@/^10pt/[rr]|-{C(\bullet,\tau)}& \leq & \bullet &\leq & \bullet
}\]

If $C$  is a regular contour, then for $a< b$ in $[0,\infty]$:
\[\{\epsilon\in [0,\infty)\ |\ C(a,\epsilon)\geq b\}=
\begin{cases} 
\{\epsilon\in [0,\infty)\ |\ \epsilon\geq C(a,-)^{-1}(b)\}, &\text{ if } a< b<\infty\\
\emptyset, &\text{ if } a<b=\infty.
\end{cases}
\]
Since all the sets on the right above are closed, regular contours are therefore  closed. For contours to be useful as tools in data analysis we need methods to produce them. We now present several of them  along with examples. 

Definition \ref{perscont} gives  three functional inequalities implicitly characterizing    contours. The last  inequality however makes it difficult to give explicit formulas. We can in any case make initial guesses for the  form of a contour and then try to find a formula satisfying the requirements  of Definition~\ref{perscont}.

\setcounter{subsection}{1}
\begin{subsection}{Exponential contours}
	Let $f\colon [0,\infty)\to  [0,\infty)$ be a  non-decreasing function such that $f(0)\geq 1$.
	For $(a,\epsilon) $ in  $[0,\infty] \times [0,\infty)$, define $C(a,\epsilon) :=f(\epsilon)a$.
	Then  $C$ satisfies the first two inequalities   of~\ref{perscont}. 
	The third inequality  is equivalent to:
	\[C(C(a,\epsilon),\tau) = C(f(\epsilon)a,\tau) = f(\tau)f(\epsilon)a \leq f(\epsilon + \tau)a = C(v,\epsilon + \tau).\]
	For instance, since $e^\tau e^\epsilon = e^{\epsilon + \tau}$, the function   $C$ associated with the exponential function $e^x$  is a  contour. In fact we could choose any positive base number $r$ other than $e$.  Such contours are called {\bf exponential}. Exponential contours are  actions.
\end{subsection}

\begin{subsection}{Standard contour}\label{standardcont}
	Let $f\colon [0,\infty)\to [0,\infty)$  be a  non-decreasing function.
	For $(a,\epsilon) $ in  $[0,\infty] \times [0,\infty)$, define $C(a,\epsilon) := a + f(\epsilon).$ Then $C$ satisfies  the first two inequalities of~\ref{perscont}. The third inequality is equivalent to   $a + f(\epsilon) + f(\tau) \leq a + f(\epsilon + \tau)$. Thus for $C$ to be a contour, $f$  should  be  superlinear:  $f(\epsilon) + f(\tau) \leq f(\epsilon + \tau)$. For  example  $C(a,\epsilon) = a + \epsilon$ is a contour  called the  {\bf standard contour}.  The standard contour is an action which is regular. Another  example is the {\bf parabolic contour} $C(a,\epsilon) = a + \epsilon^2$. The parabolic contour is not an action, however it is regular. In fact all contours of the form $C(a,\epsilon) = a + f(\epsilon)$ are regular 
	if \(f\) is superlinear and strictly increasing.
\end{subsection}
\smallskip

Contours can also be described by  integral equations. In Sections~\ref{sdfbhndfjhgh} and~\ref{shift type} we consider a Lebesgue measurable function  $f\colon [0,\infty)\to (0,\infty)$  with strictly positive values referred to as a {\bf density}. In Section \ref{contour_usage} we illustrate visualizations of some densities and  the associated contours of the following two  types.

\begin{subsection}{Contours of distance type}\label{sdfbhndfjhgh}
	Since $f$ has strictly positive values, for $(a,\epsilon) $ in  $[0,\infty) \times [0,\infty)$, there is a unique 
	$\text{D}_f(a,\epsilon)$ in $[a,\infty)$  for which: 
	$$\epsilon=\int_{a}^{D_f(a,\epsilon)}f(x) dx.$$
	Additivity of integrals gives for  $\epsilon$ and $\tau$ in $[0,\infty)$:
	\[\epsilon+\tau=\int_{a}^{D_f(a,\epsilon)}f(x) dx+\int_{D_f(a,\epsilon)}^{D_f(D_f(a,\epsilon),\tau)}f(x) dx=
	\int_{a}^{D_f(D_f(a,\epsilon),\tau)}f(x) dx\]
	implying  $D_f(D_f(a,\epsilon),\tau)=D_f(a,\epsilon+\tau)$. 
	The inequality $D_f(a,\epsilon)\leq D_f(b,\tau)$,  for $a\leq b<\infty$ and $\epsilon\leq \tau<\infty$, is a consequence of the monotonicity of integrals.  
	If in addition we set $D_f(\infty,\epsilon):=\infty$, then the obtained function $D_f\colon [0,\infty] \times [0,\infty)\to [0,\infty]$ is a contour, even an action. It is called of {\bf  distance type} as it describes 
	the distance  needed  to move from $a$ to the right  in order for the area under the graph of $f$ to reach $\epsilon$.
	Distance type contours are regular. If   density   is  the  constant  function $1$, then $\epsilon = \int_{a}^{D_1(a,\epsilon)}dx = D_1(a,\epsilon) - a$ and thus $D_1(a,\epsilon) = a+ \epsilon$ is the standard contour (see \ref{standardcont}).
\end{subsection}

\begin{subsection}{Contours of shift type}\label{shift type}
	For $a$ in $[0,\infty]$, there is a unique $y$ in $[0,\infty]$ such that $a=\int_{0}^{y}f(x)dx$. Define: 
	$$S_f(a,\epsilon):=\int_{0}^{y+\epsilon}f(x)dx.$$
	Monotonicity of integrals implies that $S_f$ satisfies the  first two inequalities of Definition~\ref{perscont}.
	Since $a=\int_{0}^{y}f(x)dx$  and $S_f(a,\epsilon)=\int_{0}^{y+\epsilon}f(x)dx$, by definition:
	\[S_f(S_f(a,\epsilon),\tau)=\int_{0}^{y+\epsilon+\tau}f(x)dx=S_f(a,\epsilon+\tau).\]
	The function $S_f$ is therefore a contour which is an action. By writing $S_f(a,\epsilon)=a+\int_{y}^{y+\epsilon}f(x)dx$ for $a=\int_{0}^{y}f(x)dx,$ we see  $S_f$ is a translation of $a$ by the $\epsilon$-step integral of the density. Therefore it is called of {\bf  shift type}. Shift type contours are regular. If the density is the constant function $1$, then $a=\int_{0}^{a}dx$ and hence $S_1(a,\epsilon)=\int_{0}^{a+\epsilon}dx=a+\epsilon$ is the standard contour.
\end{subsection}

\begin{subsection}{Truncating contours}  \label{pt trancations}

	Let  $C: [0,\infty] \times [0,\infty) \ra [0,\infty]$ be a  contour. Choose  an element  $\alpha$  in $[0,\infty]$.  For $(a,\epsilon)$ in  
	$ [0,\infty] \times [0,\infty)$ define:
	\[
	(C/\alpha)(a,\epsilon):= 
	\begin{cases}
	C(a,\epsilon), & \text{if } C(a,\epsilon)<\alpha\\
	\infty, & \text{if } C(a,\epsilon)\geq \alpha.
	\end{cases}
	\]
	For example $C/0=\infty$ and  $C/\infty=C$. We claim that $C/\alpha$ is a contour. The first two  inequalities of 	Definition~\ref{perscont} are clear.  It remains to show:
	\[(C/\alpha)\big( (C/\alpha)(a,\epsilon),\tau\big) \leq (C/\alpha)(a,\epsilon+\tau).
	\]
	The inequality is clear if $C(a,\epsilon+\tau)\geq \alpha$. Assume $C(a,\epsilon+\tau)< \alpha$. This implies that also  $C(a,\epsilon)<\alpha$ and $C(C(a,\epsilon),\tau)<\alpha$. Consequently $(C/\alpha)\big( (C/\alpha)(a,\epsilon),\tau\big) \allowbreak =C(C(a,\epsilon),\tau)$ and $(C/\alpha)(a,\epsilon+\tau)=C(a,\epsilon+\tau)$ and hence in this case  the  inequality follows from the fact that $C$ is a contour. 
	
	The contour  $C/\alpha$ is called the {\bf truncation} of $C$ at $\alpha$. If $C$ is closed, then so is its truncation $C/\alpha$ for  $\alpha$ in $[0,\infty]$. If $\alpha \leq  \beta$ in $[0,\infty]$, then for  $(a,\epsilon)$  in $[0,\infty] \times [0,\infty)$:
	\[\infty =(C/0)(a,\epsilon)\geq(C/\alpha)(a,\epsilon)\geq (C/\beta)(a,\epsilon)\geq C(a,\epsilon).
	\]
\end{subsection}
\begin{subsection}{Almost a contour}
	Let  $C: [0,\infty] \times [0,\infty) \ra [0,\infty]$ be a  contour. Choose an element  $\alpha$  in $[0,\infty]$. For $(a,\epsilon)$ in  \([0,\infty] \times [0,\infty)\) define:
	\[
	(C/\!\!/\alpha)(a,\epsilon) := \begin{cases}
	\infty, & \text{if } a=\infty\\
	C(a,\epsilon), & \text{if } a<\infty \text{ and }  C(a,\epsilon)-a<\alpha\\
	\infty, & \text{if } a<\infty \text{ and } C(a,\epsilon)-a\geq \alpha.
	\end{cases}
	\]
	Is the function $(C/\!\!/\alpha)$ a contour? The second inequality of Definition~\ref{perscont} is clear. The third inequality 
	$(C/\!\!/\alpha)\big( (C/\!\!/\alpha)(a,\epsilon),\tau\big) \leq(C/\!\!/\alpha)(a,\epsilon+\tau)$ is clear if $a=\infty$ or $a<\infty$ and $C(a,\epsilon+\tau)-a\geq \alpha$. Assume $a<\infty$ and $C(a,\epsilon+\tau)-a<\alpha$. This implies:
	\begin{align*} C(a,\epsilon)-a & \leq C(a,\epsilon+\tau)-a<\alpha,\\
		C(C(a,\epsilon),\tau)-C(a,\epsilon) & \leq C(a,\epsilon+\tau)-a<\alpha.
	\end{align*}
	Thus  $(C/\!\!/\alpha)\big( (C/\!\!/\alpha)(a,\epsilon),\tau\big)=C(C(a,\epsilon),\tau)$ and 
	$(C/\!\!/\alpha)(a,\epsilon+\tau)=C(a,\epsilon+\tau)$. The desired inequality follows    from the fact that $C$ 
	is a contour.  
	
	If $\epsilon\leq \tau$, then since $C(a,\epsilon)-a\leq C(a,\tau)-a$, the inequality $(C/\!\!/\alpha)(a,\epsilon)\leq (C/\!\!/\alpha)(a,\tau)$
	holds for any $a$.  Thus the function $C/\!\!/\alpha$ satisfies almost all of the requirements of the Definition~\ref{perscont} except possibly for the preservation 
	of the poset relation in  the first variable. This last requirement  can in fact  fail to be satisfied.  For example consider the distance  contour $D_f$ with respect to the density given in Figure~\ref{fig_point_processes_contour}. In this case $D_f(0.2,1)-0.2>1.7$ and  $D_f(0.3,1)-0.3<1.7$.  Thus
	$(D_f/\!\!/1.7)(0.2,1)=\infty$ and $(D_f/\!\!/1.7)(0.3,1)=D_f(0.3,1)<2$.  
	
	It turns out that all the results  in this article regarding contours until Theorem~\ref{agfhjnd} do not require the assumption of  the preservation of the poset relation in  the first variable.  Exploring generalizations of contours to functions that do not preserve order in the first variable   is a part of our carrent research.

\end{subsection}
Contours appeared independently  in \cite{MR3413628} under the name superlinear families
where  they were used to define interleaving distances between generalized persistence modules. Since then generalized persistence modules have gathered some interest, see \cite{MeehanMeyer} and \cite{Puuska}. In  this section  we presented  a variety of  ways of constructing contours  greatly enlarging \cite{MR3413628}, 
in which only the standard contour is given as a concrete example of  a superlinear family. The aim of the next section is to show how contours lead to  pseudometrics on $\text{Tame}([0,\infty),\text{Vec}_K)$.

\section{Constructing pseudometrics from contours}
In this section we explain how to use a  contour  to  define a   pseudometric on $\text{Tame}([0,\infty),\text{Vec}_K)$. The initial idea developed together with Oliver G\"{a}fvert and some of the text and diagrams below were written by him. It is also planned for some of this material to be  a part of Oliver's future work. We refer to the thesis work~\cite{OliverThesis} and paper~\cite{OliverWojtek} for relevant  background  that lead us together with Oliver to discover this connection between contours and pseudometrics.

To estimate   how far apart  tame parametrized vector spaces are from each other with respect  to a contour
we use the notion of equivalences:
\begin{definition}\label{sdgsfghs}
	Let $C\colon [0,\infty] \times [0,\infty)\to [0,\infty]$ be a contour,
	$V$ and $W$ be  tame $[0,\infty)$-parametrized  $K$ vector spaces, and 
	$\epsilon$  be in $[0,\infty)$. 
	\begin{enumerate}
		\item 
		A map $f\colon V\to W$ is called an {\bf $\epsilon$-equivalence}  (with respect to $C$) if, for every 
		$a$ in $[0,\infty)$ such that $C(a,\epsilon)<\infty$, there is a linear function  $W_a\to V_{C(a,\epsilon)}$ making the following diagram commutative:
		\[\xymatrix@R=15pt@C=18pt{
			V_a\dto_{f_a}\rrto^-{V_{a\leq C(a,\epsilon)}}  & & V_{C(a,\epsilon)}\dto^{f_{C(a,\epsilon)}}  \\
			W_a\rrto_-{W_{a\leq C(a,\epsilon)}}\urrto && W_{C(a,\epsilon)}
		}\] 
		\item   The objects $V$ and $W$ are called  {\bf $\epsilon$-equivalent}  (with respect to $C$) if there is a  tame $[0,\infty)$-parametrized  $K$ vector space $X$ and maps $f\colon V\to X\leftarrow W:\!g$ such that $f$ is an $\epsilon_1$-equivalence, $g$ is an $\epsilon_2$-equivalence, and $\epsilon_1+\epsilon_2\leq \epsilon$.
		\item Let $S:=\{\epsilon\in[0,\infty)\ |\ \text{$V$ and $W$ are $\epsilon$-equivalent}\}$. Define:
		\[d_C(V,W):=
		\begin{cases}
		\infty, &\text{ if } S=\emptyset\\
		\text{\rm inf}(S), &\text{ if } S\not=\emptyset.
		\end{cases}\]
	\end{enumerate}
\end{definition}

If $C=\infty$, then every map is a $0$-equivalence,  all   tame $[0,\infty)$-parametrized  $K$ vector spaces $V$ and $W$ are $0$-equivalent,  and   $d_C(V,W)=0$. A monomorphism  $f\colon V\to W$ in $\text{\rm Tame}([0,\infty),\text{\rm Vec}_K)$ is an $\epsilon$-equivalence with respect to $C$ if and only if the image of  $W_{a\leq C(a,\epsilon)}\colon W_a\to W_{C(a,\epsilon)}$ is included in the image of $f_{C(a,\epsilon)}\colon V_{C(a,\epsilon)}\to W_{C(a,\epsilon)}$ for all $a$ in $[0,\infty)$ such that $C(a,\epsilon)<\infty$. In particular  $0\to W$ is an  $\epsilon$-equivalence   if and only if $W_{a\leq C(a,\epsilon)}\colon W_a\to W_{C(a,\epsilon)}$  is the zero function for  all such $a$. Furthermore $W$ is $\epsilon$-equivalent to $0$ if and only if $0\to W$ is an $\epsilon$-equivalence. Thus $d_C(0,W)< \epsilon$ if and only if  $W_{a\leq C(a,\epsilon)}\colon W_a\to W_{C(a,\epsilon)}$  is the zero function for all $a$ in $[0,\infty)$ such that $C(a,\epsilon)<\infty$. It  is however not true in general that if $\epsilon=d_C(0,W)<\infty$, then $W_{a\leq C(a,\epsilon)}\colon W_a\to W_{C(a,\epsilon)}$  is the zero  function for  $a$ in $[0,\infty)$ such that $C(a,\epsilon)<\infty$. This depends if $C$ is closed or not.

According to Definition~\ref{sdgsfghs}, to estimate and  calculate $d_C(V,W)$, the objects $V$ and $W$ are compared through a  third tame $[0,\infty)$-parametrized vector space via a short zig-zag of equivalences $V\to X\leftarrow W$.  In principal  to assure the triangular inequality and obtain a pseudometric one should  compare $V$ and $W$   not via  short but long zig-zags $V\to X_0\leftarrow \cdots \to X_k\to W$ of equivalences. The main content of the next proposition is that in our case short zig-zags are sufficient.    

\begin{proposition}\label{adsfgdsfhgbd}
	$d_C$ is a pseudometric on $\text{\rm Tame}([0,\infty),\text{\rm Vec}_K)$.
\end{proposition}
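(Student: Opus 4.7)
My plan is to verify the three pseudometric axioms in turn. For reflexivity, $\text{id}_V\colon V\to V$ is a $0$-equivalence: since $C(a,0)\geq a$, the transition $V_{a\leq C(a,0)}$ itself serves as the back-map required by Definition~\ref{sdgsfghs}(1). Taking $X=V$ with both maps the identity in Definition~\ref{sdgsfghs}(2) gives $d_C(V,V)=0$. Symmetry is immediate because the zig-zag $V\to X\leftarrow W$ and the bound $\epsilon_1+\epsilon_2\leq\epsilon$ are already symmetric in the two objects.

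The substance is the triangle inequality $d_C(V,Z)\leq d_C(V,W)+d_C(W,Z)$, which I would establish by merging two zig-zags into a single short one via a pushout. Suppose $V\to X\leftarrow W$ exhibits $\epsilon$-equivalence (with $\epsilon_1+\epsilon_2\leq\epsilon$) and $W\to Y\leftarrow Z$ exhibits $\delta$-equivalence (with $\delta_1+\delta_2\leq\delta$). I would form the pushout $U:=X\cup_W Y$ computed pointwise in $\text{Tame}([0,\infty),\text{Vec}_K)$. Tameness of $U$ is inherited: at every scale the pushout of finite-dimensional vector spaces is finite-dimensional, and outside the finite union of transition points of $X$, $W$, and $Y$ the pushout of isomorphisms is an isomorphism. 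The composites $V\to X\to U$ and $Z\to Y\to U$ then provide a short zig-zag $V\to U\leftarrow Z$, and the task reduces to showing it witnesses $(\epsilon+\delta)$-equivalence.

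Two supporting lemmas drive this. \emph{Composition:} the composite of an $\alpha$-equivalence $f\colon V\to X$ and a $\beta$-equivalence $h\colon X\to U$ is an $(\alpha+\beta)$-equivalence; the back-map at scale $a$ is $V_{C(C(a,\beta),\alpha)\leq C(a,\alpha+\beta)}\circ\gamma_{C(a,\beta)}\circ\eta_a$, where $\gamma$ and $\eta$ are the given back-maps for $f$ and $h$, and the outer transition reconciles codomains using the contour inequality $C(C(a,\beta),\alpha)\leq C(a,\alpha+\beta)$ of Definition~\ref{perscont}(3); both triangle conditions follow by naturality combined with the triangle conditions for $f$ and $h$. \emph{Pushout preservation:} if $f_2\colon W\to X$ is an $\epsilon_2$-equivalence with back-map $\beta_a\colon X_a\to W_{C(a,\epsilon_2)}$, then $\iota_Y\colon Y\to U$ is an $\epsilon_2$-equivalence; writing $U_a=(X_a\oplus Y_a)/W_a$, the back-map on representatives is $(x,y)\mapsto Y_{a\leq C(a,\epsilon_2)}(y)+g_{1,C(a,\epsilon_2)}(\beta_a(x))$. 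Symmetrically $\iota_X\colon X\to U$ is a $\delta_1$-equivalence. Applying the composition lemma, $V\to U$ is an $(\epsilon_1+\delta_1)$-equivalence and $Z\to U$ is an $(\epsilon_2+\delta_2)$-equivalence, so $V$ and $Z$ are $(\epsilon+\delta)$-equivalent; passing to infima in the definition of $d_C$ yields the triangle inequality.

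The main technical obstacle is the pushout-preservation lemma: one must check that the candidate back-map descends to the quotient (this uses one of the two triangle conditions of the $\epsilon_2$-equivalence $f_2$ together with naturality of $g_1$), and that both triangles for $\iota_Y$ commute (each reducing to the other triangle condition for $f_2$ plus naturality). The composition lemma is by contrast a routine diagram chase essentially forced by the contour inequality. Once these two pieces are in place, the proposition follows.
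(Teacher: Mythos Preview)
Your proposal is correct and follows essentially the same route as the paper: both prove the triangle inequality by forming the pushout of the two middle maps in the concatenated zig-zag and invoking the two auxiliary facts that (i) a composite of an $\alpha$- and a $\beta$-equivalence is an $(\alpha+\beta)$-equivalence, and (ii) $\epsilon$-equivalences are preserved under pushout (this is exactly the content of Proposition~\ref{prop basicequiv}). The only differences are cosmetic: you verify reflexivity explicitly and give a concrete formula for the pushout back-map, whereas the paper obtains that back-map abstractly from the universal property of the pushout cube.
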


To prove this proposition and explain why short zig-zags are sufficient we  need:

\begin{proposition}\label{prop basicequiv}
	Let $C\colon [0,\infty] \times [0,\infty)\to [0,\infty]$ be a contour and $U$, $V$, and $W$ 
	be  tame $[0,\infty)$-parametrized  $K$ vector spaces.
	\begin{enumerate}
		\item  Composition of $\tau$- and $\epsilon$-equivalences is an $(\tau+\epsilon)$-equivalence.		
		\item  In the following   pushout square, $P$ is also tame and, if  $f$ is an $\epsilon$-equivalence, then so is $g$:
		\[\xymatrix@R=13pt@C=15pt{
			V\dto_-f \rto & U\dto^-{g}\\
			W\rto & P
		}\] 
	\end{enumerate}
\end{proposition}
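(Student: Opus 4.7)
The plan is to prove both parts by explicit diagram chases, using only naturality of the relevant maps, the triangle identities built into Definition~\ref{sdgsfghs}, and the contour axioms (1)--(3) of Definition~\ref{perscont}. For part~(1), let $f\colon V\to W$ be a $\tau$-equivalence and $g\colon W\to U$ an $\epsilon$-equivalence, and fix $a$ with $C(a,\tau+\epsilon)<\infty$. Monotonicity of $C$ gives $C(a,\epsilon)\leq C(a,\tau+\epsilon)<\infty$, so $g$ supplies a backward map $k_a\colon U_a \to W_{C(a,\epsilon)}$. Contour axiom~(3) then gives $C(C(a,\epsilon),\tau)\leq C(a,\epsilon+\tau)<\infty$, so $f$ supplies a backward map $h_{C(a,\epsilon)}\colon W_{C(a,\epsilon)}\to V_{C(C(a,\epsilon),\tau)}$. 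The candidate backward map $U_a\to V_{C(a,\tau+\epsilon)}$ for the composition $g\circ f$ is then $V_{C(C(a,\epsilon),\tau)\leq C(a,\tau+\epsilon)}\circ h_{C(a,\epsilon)}\circ k_a$. The two required triangles unwind into iterated applications of naturality of $f$ and $g$, combined with the triangle identities holding for each individual backward map.

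For part~(2), label the square so that $f\colon V\to W$ is the given $\epsilon$-equivalence, $\phi\colon V\to U$ is the top edge, $\psi\colon W\to P$ is the bottom edge, and $g\colon U\to P$ is the right edge we wish to study. Since colimits of functors into $\text{Vec}_K$ are computed pointwise, each $P_a$ is the pushout of the vector space diagram $W_a\leftarrow V_a\to U_a$, hence finite dimensional. For the discreteness condition in tameness, take the union of the three finite witnessing sets $0<t_0<\cdots<t_k$ for $V$, $W$, and $U$; outside those scales all three input transition maps are isomorphisms, and hence so is the induced pushout transition. For the $\epsilon$-equivalence statement, fix $a$ with $C(a,\epsilon)<\infty$, let $h_a\colon W_a\to V_{C(a,\epsilon)}$ witness the $\epsilon$-equivalence $f$, and define $k_a\colon P_a\to U_{C(a,\epsilon)}$ by the universal property of the pushout, specifying it to equal $U_{a\leq C(a,\epsilon)}$ on the $U_a$-summand and $\phi_{C(a,\epsilon)}\circ h_a$ on the $W_a$-summand. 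Compatibility over $V_a$ is the identity $\phi_{C(a,\epsilon)}\circ h_a\circ f_a = \phi_{C(a,\epsilon)}\circ V_{a\leq C(a,\epsilon)} = U_{a\leq C(a,\epsilon)}\circ \phi_a$, which uses the upper triangle for $h_a$ and naturality of $\phi$.

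With $k_a$ in hand, the upper triangle $k_a\circ g_a = U_{a\leq C(a,\epsilon)}$ holds by construction. For the lower triangle $g_{C(a,\epsilon)}\circ k_a = P_{a\leq C(a,\epsilon)}$, I would again invoke the pushout universal property and check equality after precomposing with the two structural maps $g_a\colon U_a\to P_a$ and $\psi_a\colon W_a\to P_a$. On the $U_a$ side this reduces to naturality of $g$; on the $W_a$ side it reduces to the commutativity $g\circ\phi=\psi\circ f$ of the pushout square, the lower triangle for $h_a$, and naturality of $\psi$. I expect the only real obstacle to be organizational: keeping track of which naturality square or triangle identity is being invoked at each step, and noticing that contour axiom~(3) must be applied twice in part~(1) --- once to guarantee finiteness of $C(C(a,\epsilon),\tau)$ and again to obtain the transition map that lands the composite in the correct codomain $V_{C(a,\tau+\epsilon)}$. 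No deeper ingredient is needed beyond these careful chases.
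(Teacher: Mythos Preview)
Your proposal is correct and follows essentially the same approach as the paper: in part~(1) you compose the two backward lifts and post-compose with the transition map coming from the contour inequality $C(C(a,\epsilon),\tau)\leq C(a,\epsilon+\tau)$, and in part~(2) you use the pointwise pushout universal property to produce the backward map $P_a\to U_{C(a,\epsilon)}$. The paper's proof of part~(2) is terser---it simply asserts the existence of the dotted arrow in a commutative cube---whereas you spell out the compatibility check over $V_a$ and the verification of both triangles; your added detail is accurate and welcome.
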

\begin{proof}
	\noindent
	(1):\quad 
	Consider an $\epsilon$-equivalence $g\colon U\to V$ and a $\tau$-equivalence $f\colon V\to W$.
	If $C(a,\tau+\epsilon)<\infty$, then  $C(a,\tau)\leq C(C(a,\tau),\epsilon)\leq C(a,\tau+\epsilon)<\infty$ and hence, for any such $a$, there are linear functions  $W_a\to V_{C(a,\tau)}\to U_{C(C(a,\tau),\epsilon)}$ making the following diagram commutative: 
	\[\xymatrix@R=10pt@C=25pt{
		U_a\dto_{g_a}\rto & U_{C(a,\tau)}\dto\rto & U_{C(C(a,\tau),\epsilon)}\dto \rto & U_{C(a,\tau+\epsilon)}\dto\\
		V_a\dto_{f_a}\rto & V_{C(a,\tau)}\dto\rto\urto & V_{C(C(a,\tau),\epsilon)}\dto \rto & V_{C(a,\tau+\epsilon)}\dto\\
		W_a\rto \urto& W_{C(a,\tau)}\rto & W_{C(C(a,\tau),\epsilon)} \rto & W_{C(a,\tau+\epsilon)}
	}\]
	The  diagonal morphism $W_a\to U_{C(a,\tau+\epsilon)}$ in this diagram  is a linear function whose existence is required for $fg$ to be an  $(\tau+\epsilon)$-equivalence.
	\smallskip
	
	\noindent
	(2):\quad  Tameness of $P$ is clear.
	Assume $C(a,\epsilon)<\infty$. Let $W_a\to V_{C(a,\epsilon)}$ be a function given by the fact that
	$f\colon V\to W$   is an $\epsilon$-equivalence.   This function fits into the following  cube where the dotted arrow  is the unique function making this cube commutative (its existence is guaranteed by the universal property of push-outs):
	\[\xymatrix@R=12pt@C=15pt{
		V_a\ddto_-{f_a}\drto\rrto & & U_a \ddto_(0.3){g_a}|\hole\drto \\
		& V_{C(a,\epsilon)}\ddto|(0.3){f_{C(a,\epsilon)}}\rrto&& U_{C(a,\epsilon)} \ddto^{g_{C(a,\epsilon)}}\\
		W_a\drto\urto\rrto|\hole  && P_a \drto \ar@{.>}[ur] \\
		&   W_{C(a,\epsilon)} \rrto & & P_{C(a,\epsilon)}
	}\]
\end{proof}

\begin{proof}[Proof of Proposition~\ref{adsfgdsfhgbd}]
	Symmetry is clear. For the triangle inequality consider $\epsilon$-equivalent   $U$ and  $V$,  and $\tau$-equivalent $V$ and  $W$ and form 
	the following diagram:
	\[\xymatrix@R=9pt@C=9pt{
		U\drto^-f & & V\dlto_-g\drto^-h & & W\dlto_-k\\
		& X\drto_-{h'} & &Y\dlto^-{g'}\\
		& & P
	}\]
	where $f$ is an $\epsilon_1$-equivalence,  $g$ is an $\epsilon_2$-equivalence, $\epsilon_1+\epsilon_2\leq\epsilon$,  $h$ is a $\tau_1$-equivalence, $k$ is a $\tau_2$-equivalence, $\tau_1+\tau_2\leq\tau$, and the central  square in this diagram is a push-out. According to~\ref{prop basicequiv}.(2),  $g'$ is  an $\epsilon_2$-equivalence and $h'$ is a $\tau_1$-equivalence. Thus~\ref{prop basicequiv}.(1) implies  $h'f$ is a $(\tau_1+\epsilon_1)$-equivalence and  $g'k$ is a $(\tau_2+\epsilon_2)$-equivalence. Since $\epsilon+\tau\geq \tau_1+\epsilon_1+\tau_2+\epsilon_2$, we can  conclude that  $U$ and $W$ are $(\epsilon+\tau)$-equivalent. The triangle inequality $d_C(U,W)\leq d_C(U,V)+d_C(V,W)$ follows.
\end{proof}

We are interested in  not just  individual pseudometrics but also in their sequences, particularly the non-decreasing ones (see Definition~\ref{asfgdsfghs}). To produce such sequences of pseudometrics on $\text{Tame}([0,\infty],\text{Vec}_K)$ we use:

\begin{proposition}\label{asdfgdsfhgfdn}
	Assume $C$ and $D$ are  contours such that   $C\geq D$. Then:
	\begin{enumerate}
		\item  An $\epsilon$-equivalence with respect to $D$ implies $\epsilon$-equivalence with respect to $C$.
		\item $d_C(V,W)\leq d_D(V,W)$.
	\end{enumerate}
\end{proposition}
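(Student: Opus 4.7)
The plan is to prove the two statements in sequence, with (2) being an immediate consequence of (1) together with Definition~\ref{sdgsfghs}.(3). The main work is therefore in part (1), and even that reduces to a diagram chase once we have identified the correct linear function. The intuition is simple: enlarging the contour only increases the indices $C(a,\epsilon)$ at which we are required to produce the ``backwards'' maps, so any backwards map supplied at the smaller index $D(a,\epsilon)$ can be pushed forward along the transition function of $V$ to the larger index $C(a,\epsilon)$.

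For (1), I would start with an $\epsilon$-equivalence $f\colon V\to W$ with respect to $D$ and fix an $a\in[0,\infty)$ with $C(a,\epsilon)<\infty$. Since $C\geq D$, we have $D(a,\epsilon)\leq C(a,\epsilon)<\infty$, so the hypothesis gives a linear map $h'\colon W_a\to V_{D(a,\epsilon)}$ that makes the two triangles commute at level $D(a,\epsilon)$. I would then define the candidate map as
\[
h:=V_{D(a,\epsilon)\leq C(a,\epsilon)}\circ h'\colon W_a\longrightarrow V_{C(a,\epsilon)}.
\]
Commutativity of the upper triangle follows by composing the identity $h'\circ f_a=V_{a\leq D(a,\epsilon)}$ with the transition $V_{D(a,\epsilon)\leq C(a,\epsilon)}$ and using functoriality of $V$. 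Commutativity of the lower triangle uses naturality of $f$ to move $V_{D(a,\epsilon)\leq C(a,\epsilon)}$ across to $W_{D(a,\epsilon)\leq C(a,\epsilon)}$, followed by $f_{D(a,\epsilon)}\circ h'=W_{a\leq D(a,\epsilon)}$ and functoriality of $W$. Both triangles thus commute, showing $f$ is an $\epsilon$-equivalence with respect to $C$.

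For (2), suppose $V$ and $W$ are $\epsilon$-equivalent with respect to $D$, witnessed by a zig-zag $V\xrightarrow{f} X\xleftarrow{g} W$ with $f$ an $\epsilon_1$-equivalence, $g$ an $\epsilon_2$-equivalence, and $\epsilon_1+\epsilon_2\leq\epsilon$. Applying (1) to each of $f$ and $g$ separately, the same zig-zag exhibits $V$ and $W$ as $\epsilon$-equivalent with respect to $C$. Therefore the set $S_D:=\{\epsilon\mid V,W \text{ are $\epsilon$-equivalent w.r.t.\ }D\}$ is contained in the analogous set $S_C$, and taking infima yields $d_C(V,W)\leq d_D(V,W)$ (with the convention $\inf\emptyset=\infty$ handling the case $S_D=\emptyset$ trivially).

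I do not anticipate a real obstacle here; the only subtle point is to be explicit about naturality of $f$ when commuting the auxiliary map $h'$ past the transition $V_{D(a,\epsilon)\leq C(a,\epsilon)}$, and to record once the trivial observation that $C(a,\epsilon)<\infty$ forces $D(a,\epsilon)<\infty$ so that the hypothesis of the $D$-equivalence is actually available at $a$.
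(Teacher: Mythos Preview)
Your proposal is correct and follows essentially the same approach as the paper: both observe that $C(a,\epsilon)<\infty$ forces $D(a,\epsilon)<\infty$, take the lift $W_a\to V_{D(a,\epsilon)}$ supplied by the $D$-equivalence, and post-compose with the transition $V_{D(a,\epsilon)\leq C(a,\epsilon)}$ to obtain the required lift for $C$. The paper presents this via a single commutative diagram and states (2) as a direct consequence of (1), while you spell out the two triangles and the zig-zag containment $S_D\subset S_C$ more explicitly, but the argument is the same.
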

\begin{proof}
	Statement (2) is a direct consequence of (1).  To show  (1), 
	let $f\colon V\to W$ be an $\epsilon$-equivalence with respect to $D$.
	If  $C(a,\epsilon)<\infty$, then also $D(a,\epsilon)<\infty$,
	and hence we can form the following commutative diagram whose  diagonal  is the function assuring  $f$ is an $\epsilon$-equivalence with respect to $C$:
	\[\xymatrix@R=13pt@C=18pt{
		V_a\dto_{f_a}\rrto^-{V_{a\leq D(a,\epsilon)}}  & & V_{D(a,\epsilon)}\dto^{f_{D(a,\epsilon)}}\ar@{->}[rrr]^-{V_{D(a,\epsilon)\leq C(a,\epsilon)}} &&& V_{C(a,\epsilon)}\dto^{f_{C(a,\epsilon)}} \\
		W_a\rrto_-{W_{a\leq D(a,\epsilon)}}\urrto && W_{D(a,\epsilon)}\ar@{->}[rrr]_-{W_{D(a,\epsilon)\leq C(a,\epsilon)}}  && & W_{C(a,\epsilon)}
	}\] 
\end{proof}

\section{Stable ranks}
We are now ready to discuss  our models for supervised persistence:
\begin{definition}
	Let $C$ be a contour and $d_C$ be the associated pseudometric on $\text{\rm Tame}([0,\infty),\text{\rm Vec}_K)$
	(see Proposition~\ref{adsfgdsfhgbd}). 
	The hierarchical stabilization (see Section~\ref{sec:hier_stab}) of  the rank function   $\text{\rm rank}\colon \text{\rm Tame}([0,\infty),\text{\rm Vec}_K)\to {\mathbf N}$  (see Section~\ref{asgdhfyjgdm}), with respect to $d_C$, is called {\bf  stable rank} and is denoted by:
	\[\widehat{\text{\rm rank}}_{C}\colon  \text{\rm Tame}([0,\infty),\text{\rm Vec}_K)\to {\mathcal M}.\]
\end{definition}
By Definition~\ref{asdghsfghdfg}, the stable rank assigns to  a tame $[0,\infty)$-parametrized  $K$ vector space $V$, the function $\widehat{\text{rank}}_{C} V\colon [0,\infty)\to[0,\infty)$   defined as follows:
\[\widehat{\text{rank}}_{C} V(t)=\text{min}\left\{\text{rank}(W)\   |\   W\in  \text{Tame}([0,\infty),\text{Vec}_K)\text{ and }d_C(V,W)\leq t\right\}.\]
Thus $\widehat{\text{rank}}_{C} V$ is non-increasing   with natural numbers as values and therefore  there are finitely many elements $0<\tau_0<\cdots< \tau_n$ in its domain $[0,\infty)$ such that $\widehat{\text{rank}}_{C} V$ is constant on the open intervals $(0,\tau_0)$,\ldots, $(\tau_{i},\tau_{i+1})$,\ldots, $(\tau_n,\infty)$. Depending on the contour,   $\widehat{\text{rank}}_{C} V$ may fail  to  be right or left continuous.

The aim of this section is to provide  effective ways of calculating the stable rank.
If a contour is closed (see Definition~\ref{perscont}), then the following fundamental properties of  the stable rank  explain how its values are related to a bar decomposition. One can then use for example the Ripser software~\cite{Ripser} for effective calculations of the stable rank.

\begin{theorem}\label{agfhjnd}
	If $C$ is  a closed contour, then 
	$\widehat{\text{\rm rank}}_{C}\colon  \text{\rm Tame}([0,\infty),\text{\rm Vec}_K)\to {\mathcal M}$
	satisfies  the following properties:
	\begin{enumerate}
		\item  The function $\widehat{\text{\rm rank}}_{C}(V)\colon [0,\infty)\to[0,\infty)$ is  right continuous for any $V.$
		\item  The function  $\widehat{\text{\rm rank}}_{C}$ is linear: $\widehat{\text{\rm rank}}_{C}(V\oplus W)=\widehat{\text{\rm rank}}_{C}(V)+\widehat{\text{\rm rank}}_{C}(W)$.
		\item $ \widehat{\text{\rm rank}}_{C}\left(\bigoplus_{i=1}^n K(s_i,e_i)\right)(t)=|\{i\ |\  C(s_i,t)<e_i\}|$.
	\end{enumerate}
\end{theorem}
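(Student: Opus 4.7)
The plan is to prove (3) first; (1) and (2) will then follow quickly from it. For the upper bound in (3), write $V = \bigoplus_{i=1}^n K(s_i, e_i)$ and set $I_t := \{i : C(s_i, t) < e_i\}$. Take $W := \bigoplus_{i \in I_t} K(s_i, e_i)$, so $V = W \oplus W'$ with $W' := \bigoplus_{i \notin I_t} K(s_i, e_i)$. I would verify that the projection $V \to W$ is a $t$-equivalence with respect to $C$: the required diagonal $W_a \to V_{C(a,t)}$ is the inclusion $W \hookrightarrow V$ composed with the transition in $V$. The only nontrivial check is that the transition of $V$ restricted to $W'$ vanishes between levels $a$ and $C(a,t)$, which holds because for $i \notin I_t$ we have $C(s_i, t) \geq e_i$, and then monotonicity of $C$ in the first variable forces $C(a, t) \geq C(s_i,t) \geq e_i$ for all $a \geq s_i$. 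Hence $V \to W \xleftarrow{\mathrm{id}} W$ witnesses $d_C(V, W) \leq t$, giving $\widehat{\mathrm{rank}}_C(V)(t) \leq |I_t|$.

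The lower bound is the technical heart and is where closedness of $C$ is essential. Given any $W$ tame with $d_C(V, W) \leq t$, closedness of $C$ says each set $\{t' : C(s_i, t') \geq e_i\}$ is closed, hence its open complement contains a neighborhood of $t$; pick $\delta > 0$ with $C(s_i, t + \delta) < e_i$ simultaneously for every $i \in I_t$. Since $d_C(V, W) < t + \delta$, there is a zig-zag $V \xrightarrow{f} X \xleftarrow{g} W$ with $f$ an $\epsilon_1$-equivalence, $g$ an $\epsilon_2$-equivalence, and $\epsilon_1 + \epsilon_2 \leq t + \delta$. The diagram chase underlying Proposition~\ref{prop basicequiv} then produces, for each admissible $a$, a factorization of the transition $V_{a \leq C(C(a, \epsilon_2), \epsilon_1)}$ through $W_{C(a, \epsilon_2)}$. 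Applying this at $a = s_i$ and tracing the bar generator $x_i$ (whose image in $V_{C(C(s_i,\epsilon_2),\epsilon_1)}$ is non-zero because $C(C(s_i, \epsilon_2), \epsilon_1) \leq C(s_i, t + \delta) < e_i$) yields, for each $i \in I_t$, a non-zero witness $y_i \in W_{C(s_i, \epsilon_2)}$ whose back-image $\psi_{C(s_i,\epsilon_2)}(y_i)$ lies in the $i$-th bar summand of $V$. The main obstacle, and the step I expect to be the most delicate, is to conclude that these $y_i$ force at least $|I_t|$ distinct bars in the bar decomposition of $W$: I would invoke the structure theorem to write $W = \bigoplus_j K(s_j', e_j')$ and construct an injective assignment $i \mapsto j(i)$ of alive bars of $V$ to bars of $W$, processing the $i \in I_t$ (say by increasing $C(s_i,\epsilon_2)$) and greedily assigning an unused bar whose summand has non-zero contribution to $y_i$; the crucial point is that because the $\psi(y_i)$ land in pairwise distinct bar summands of $V$, the $y_i$ cannot be simultaneously absorbed by fewer than $|I_t|$ bars of $W$. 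Monotonicity of rank (\ref{asdgsdfhdfjn}) then delivers $\mathrm{rank}(W) \geq |I_t|$.

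Parts (1) and (2) follow from (3) with little further work. For (1), closedness of $C$ says each $\{t : C(s_i, t) \geq e_i\}$ is closed, so each indicator $t \mapsto \mathbf{1}[C(s_i, t) < e_i]$ is lower semicontinuous; a finite sum of lower semicontinuous functions is lower semicontinuous; a non-increasing lower semicontinuous function on $[0, \infty)$ is right-continuous. For (2), the structure theorem (\ref{sdfgfgshbd}) writes $V$ and $W$ as direct sums of bars, and the bar decomposition of $V \oplus W$ is the disjoint union of the two, so the formula in (3) immediately yields $\widehat{\mathrm{rank}}_C(V \oplus W)(t) = \widehat{\mathrm{rank}}_C(V)(t) + \widehat{\mathrm{rank}}_C(W)(t)$ by inspection.
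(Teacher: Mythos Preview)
Your upper bound in (3) is fine and is essentially the paper's argument (the paper uses the inclusion $V_C[t]\subset V$, you use the projection onto the surviving bars; both witness $d_C(V,\,\cdot\,)\leq t$ with the same rank $|I_t|$). Your derivations of (1) and (2) from (3) are also correct.

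The gap is in the lower bound of (3). You correctly set up the factorization $V_a\to W_{C(a,\epsilon_2)}\to V_{C(C(a,\epsilon_2),\epsilon_1)}$ equal to the transition of $V$, and you obtain nonzero witnesses $y_i\in W_{b_i}$ with $\psi_{b_i}(y_i)$ nonzero in the $i$-th bar summand of $V$. But the greedy injective assignment $i\mapsto j(i)$ is not justified. The lifts $\beta_b\colon X_b\to V_{C(b,\epsilon_1)}$ supplied by the $\epsilon_1$-equivalence are chosen level by level; they do \emph{not} assemble into a natural transformation, so the maps $\psi_b=\beta_b g_b$ need not commute with transitions of $W$ and $V$. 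Consequently, nothing prevents, say, $y_{i_2}=W_{b_{i_1}\leq b_{i_2}}(y_{i_1})$ while $\psi_{b_{i_1}}(y_{i_1})$ and $\psi_{b_{i_2}}(y_{i_2})$ lie in different bar summands of $V$; your ``crucial point'' does not rule this out, and the claim that the $y_i$ cannot be absorbed by fewer than $|I_t|$ bars of $W$ remains unproven.

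The paper sidesteps this difficulty by reversing the direction of the argument. Instead of pushing bar generators of $V$ into $W$ and trying to count, it takes a \emph{minimal} generating set $\{w_j\}$ of $W$, pushes these into $V$ via $v_j:=\beta_{s_j}(g(w_j))$, and shows that the subfunctor $V':=\langle v_j\rangle\subset V$ contains $V_C[t']$ for any $t'>\epsilon_1+\epsilon_2$. Monotonicity of the rank for subfunctors (\S\ref{asdgsdfhdfjn}) then yields $\text{rank}(W)\geq \text{rank}(V')\geq \text{rank}(V_C[t'])=|I_{t'}|$, and closedness lets one take $t'$ with $I_{t'}=I_t$. The point is that ``rank of a subspace is $\leq$ rank of the ambient'' is a robust inequality that does not require any naturality of the lifts, whereas your combinatorial matching does.
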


According to the third statement of Theorem~\ref{agfhjnd}, the values of the stable rank are certain  counts of bars 
in a bar decomposition. Note that in our  entire set up of the hierarchical stabilization process and in the 
definition of the stable rank  no bar decomposition is  mentioned or used. The aim of the hierarchical stabilization
is to convert discrete invariants into  stable invariants by minimizing over discs.   The stable rank therefore encodes  in a stable  way some information about how ranks of tame $[0,\infty)$-parametrized vector spaces change in certain neighbourhoods. Achieving stability is the key objective of this process. The linearity property (Theorem~\ref{agfhjnd}.(2)) is then what connects the stable rank with  bar  decompositions. Traditionally, in persistent homology one considers  bar decompositions first and then proves that with respect to certain metrics  the associated persistence diagrams are stable. The reversal  of this  perspective, {\bf stability first and 	decompositions after}, has been an important step in our approach to  homological persistence. Stability is so fundamental for any method aimed at data analysis that we believe it should be the primary guiding principle in
homological  persistence.  Decompositions can be then used as effective tools for calculating the constructed stable invariants. This change of perspective is vital to multiparameter generalizations of homological persistence where decomposition methods are not available (see~\cite{OliverWojtek}).

Theorem~\ref{agfhjnd} is a direct consequence of Corollary~\ref{asfdfghfdghjn} and Proposition~\ref{asfgdfsghbn}.
This  strategy to prove Theorem~\ref{agfhjnd}   is taken from~\cite{OliverWojtek}
(see  \cite[Section  8]{OliverWojtek}) and  is based on:  
\begin{definition}
	Let $C$ be a contour,  $t$  in $[0,\infty)$, and $V$ in $\text{\rm Tame}([0,\infty),\text{\rm Vec}_K)$.
	The {\bf $t$-shift} of $V$ with respect to $C$, denoted by $V_C[t]$,  is the $[0,\infty)$-parametrized  $K$ vector subspace of $V$  generated by all the elements in the images of the transition functions 
	$V_{a\leq C(a,t)}\colon V_a\to V_{C(a,t)}$ for all $a$ in $[0,\infty)$ such that $ C(a,t)<\infty$.
\end{definition}

The   shift operation enjoys the following properties:

\begin{proposition}\label{asfgdfsghbn}
	Let $V,W$  be  in $\text{\rm Tame}([0,\infty),\text{\rm Vec}_K)$, 
	$t$  in $[0,\infty)$, and $C$ be  a contour.
	\begin{enumerate}
		\item If $V$ is generated by $\{g_i\in V_{s_i}\}_{1\leq i\leq n}$, then $V_C[t]$ is generated by:
		\[\{V_{s_i\leq C(s_i,t)}(g_i)\ |\ 1\leq i\leq n\text{ and } C(s_i,t)<\infty\}.\]
		\item  $V_C[t]$ is tame.
		\item The inclusion $V_C[t]\subset V$ is a $t$-equivalence with respect to $C$.
		\item A monomorphism $f\colon W\subset V$   is a $t$-equivalence if and only if
		$V_C[t]$ is contained in the image of $f$.
		\item The shift is linear: $(V\oplus W)_C[t]$ and $V_C[t]\oplus W_C[t]$ are isomorphic.
		\item $\left(\bigoplus_{i=1}^n K(s_i,e_i)\right)_C[t]$ is isomorphic to $ \bigoplus_{\{i\ |\ C(s_i,t)<e_i\}}K(C(s_i,t),e_i)$.
		\item   $\text{\rm rank}\left((\bigoplus_{i=1}^n K(s_i,e_i))_C[t]\right)=|\{i\ |\ C(s_i,t)<e_i\}|$.		
	\end{enumerate}
\end{proposition}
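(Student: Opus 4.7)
The seven statements build on each other, so I would prove them in order and exploit each earlier part in the later ones. The underlying principle throughout is that every feature of $V_C[t]$ comes from images of transition maps, and the contour only enters through its monotonicity in the first variable and the condition $C(a,t)<\infty$.

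First I would prove (1). An arbitrary element of $V_a$ has the form $\sum \lambda_j V_{s_j\leq a}(g_j)$ for those $j$ with $s_j\leq a$, so its image under $V_{a\leq C(a,t)}$ is $\sum \lambda_j V_{s_j\leq C(a,t)}(g_j)$. Monotonicity of $C$ in the first variable gives $C(s_j,t)\leq C(a,t)$, and $C(a,t)<\infty$ forces $C(s_j,t)<\infty$; hence $V_{s_j\leq C(a,t)}(g_j)= V_{C(s_j,t)\leq C(a,t)}\bigl(V_{s_j\leq C(s_j,t)}(g_j)\bigr)$ lies in the subspace generated by $\{V_{s_i\leq C(s_i,t)}(g_i)\}$. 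The reverse containment is immediate, giving (1). Then (2) is a one-line consequence: by (1) the subspace $V_C[t]$ is finitely generated, and monotonicity of the rank (Section~\ref{asdgsdfhdfjn}) yields tameness. For (3), the required diagonal $V_a\to V_C[t]_{C(a,t)}$ making the equivalence diagram commute is literally $V_{a\leq C(a,t)}$ itself, whose image lies in $V_C[t]_{C(a,t)}$ by the very definition of the shift.

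For (4), I would analyze the equivalence diagram for a monomorphism $f\colon W\subset V$. The diagonal we must produce is a map $\sigma_a\colon V_a\to W_{C(a,t)}$ satisfying $f_{C(a,t)}\sigma_a=V_{a\leq C(a,t)}$ and $\sigma_a f_a=W_{a\leq C(a,t)}$. Since $f_{C(a,t)}$ is injective, such a $\sigma_a$ exists if and only if the image of $V_{a\leq C(a,t)}$ sits inside the image of $f_{C(a,t)}$; and the second triangle is then automatic by naturality of $f$ together with injectivity of $f_{C(a,t)}$. So $f$ is a $t$-equivalence exactly when $\mathrm{im}(V_{a\leq C(a,t)})\subseteq \mathrm{im}(f_{C(a,t)})$ for every $a$ with $C(a,t)<\infty$, which by the definition of $V_C[t]$ is precisely $V_C[t]\subseteq \mathrm{im}(f)$.

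Parts (5)--(7) are then essentially bookkeeping. For (5), the transition maps of $V\oplus W$ are the direct sums of the transition maps of $V$ and $W$, so their images decompose as well, giving $(V\oplus W)_C[t]\cong V_C[t]\oplus W_C[t]$. By (5) it suffices to compute $K(s,e)_C[t]$ for a single bar in order to establish (6); for $a\geq s$ with $C(a,t)<e$ the transition $K(s,e)_{a\leq C(a,t)}$ is the identity on $K$, and for $a=s$ (if $C(s,t)<e$) the resulting subspace is $K(C(s,t),e)\subset K(s,e)$, which contains all the images for $a>s$ by the argument in (1). If $C(s,t)\geq e$, the subspace is $0$. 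This yields (6), and taking ranks gives (7) since each nontrivial summand $K(C(s_i,t),e_i)$ contributes $1$.

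I expect the proof of (1) to be the main technical step, since it is where the contour axioms interact with the generation of subspaces and is needed to make (2) and (6) go through cleanly. Part (4) is the most delicate diagrammatically, but once the two triangle equations are disentangled using injectivity of $f$, the equivalence with $V_C[t]\subseteq\mathrm{im}(f)$ follows without difficulty.
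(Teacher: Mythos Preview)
Your proof is correct and follows the same approach as the paper, which dispatches the whole proposition in one sentence (``(1) is a direct consequence of the definitions; (2), (3), (4), (5), and (6) follow from (1), and (7) from (6)''); you have simply spelled out the details the paper leaves implicit. One minor organizational difference: you derive (3) and (4) directly from the definition of $V_C[t]$ and the characterization of monomorphism $\epsilon$-equivalences given just after Definition~\ref{sdgsfghs}, whereas the paper nominally routes everything through (1), but the substance is the same.
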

\begin{proof}
	(1) is a direct consequence of the  definitions; (2), (3), (4), (5), and (6)  follow
	from (1), and (7) from (6).
\end{proof}

If   $t\leq t'$ in $[0,\infty)$, then $V_C[t]\supset V_C[t']$ and  therefore  by the monotonicity of rank
(see~\ref{asdgsdfhdfjn}), $\text{rank}(V_C[t])\geq\text{rank}(V_C[t'])$. Thus the function $t\mapsto \text{rank}(V_C[t])$ is non-increasing and,  similarly to the stable rank, there are finitely many elements $0<\tau_0<\cdots< \tau_n$ in  $[0,\infty)$ such that $\text{rank}(V_C[-])$ is constant on the open intervals $(0,\tau_0)$,\ldots, $(\tau_{i},\tau_{i+1})$,\ldots, $(\tau_n,\infty)$.

\begin{proposition}\label{sdGSGFJHDGHJD} Let $V$ be a tame $[0,\infty)$-parametrized  $K$ vector space. If $C$ is a closed contour, then $ \text{\rm rank}(V_C[-])$ is a right continuous function, i.e., there are finitely many  $0<\tau_0<\cdots <\tau_n$ in $[0,\infty)$  such that  $\text{\rm rank}(V_C[-])$	is constant on the  left closed and right open  intervals $[0,\tau_0),\ldots, [\tau_i,\tau_{i+1}),\ldots, [\tau_{n},\infty)$.
\end{proposition}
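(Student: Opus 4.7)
The plan is to reduce the statement to bar decompositions and extract the right-continuity directly from the closedness hypothesis on $C$. Using the structure theorem from Section~\ref{sdfgfgshbd}, fix an isomorphism $V \cong \bigoplus_{i=1}^n K(s_i,e_i)$. Then Proposition~\ref{asfgdfsghbn}.(7) gives
\[
\text{rank}(V_C[t]) = \big|\{i : C(s_i,t) < e_i\}\big|
\]
for every $t\in[0,\infty)$. This reduces the problem to analysing, for each fixed bar $K(s_i,e_i)$, the set of parameters $t$ at which that bar contributes to the count.

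For each $i$, define
\[
S_i := \{\epsilon \in [0,\infty) : C(s_i,\epsilon) \geq e_i\}.
\]
Since $s_i < e_i$ and $C$ is closed, $S_i$ is closed in $[0,\infty)$. The first axiom of Definition~\ref{perscont} makes $C(s_i,-)$ non-decreasing, so $S_i$ is also upward-closed. Thus $S_i$ is either empty or an interval of the form $[\sigma_i,\infty)$ for some $\sigma_i \in [0,\infty)$; set $\sigma_i := \infty$ in the empty case. The condition $C(s_i,t) < e_i$ becomes simply $t < \sigma_i$, and so
\[
\text{rank}(V_C[t]) = \big|\{i : t < \sigma_i\}\big|.
\]

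Now the right-continuous step-function structure follows at once. Let $0 < \tau_0 < \tau_1 < \cdots < \tau_n$ be the distinct finite values among $\{\sigma_1,\ldots,\sigma_n\} \cap (0,\infty)$ arranged in increasing order (discarding any $\sigma_i = 0$, which never contributes, and any $\sigma_i = \infty$, which always contributes). On each interval $[0,\tau_0)$, $[\tau_0,\tau_1),\ldots,[\tau_n,\infty)$ the set $\{i : t<\sigma_i\}$ does not change, so $\text{rank}(V_C[-])$ is constant there. I expect the only delicate points to be the bookkeeping at boundary cases (bars with $\sigma_i=0$ or $\sigma_i=\infty$, and the $e_i=\infty$ case where closedness of $C$ is applied with $b=\infty>s_i=a$, which is still covered by Definition~\ref{perscont}); the substantive content of the proof is entirely the closed-plus-monotone-implies-upward-ray observation about each $S_i$.
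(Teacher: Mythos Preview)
Your argument is correct. You invoke the structure theorem to write $V\cong\bigoplus_i K(s_i,e_i)$, apply Proposition~\ref{asfgdfsghbn}.(7) to get $\text{rank}(V_C[t])=|\{i:C(s_i,t)<e_i\}|$, and then use closedness plus monotonicity of $C(s_i,-)$ to show each contributing set $\{t:C(s_i,t)<e_i\}$ is $[0,\sigma_i)$. This is clean and fully rigorous; the boundary cases you flag are handled exactly as you describe.

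The paper's proof takes a different route. It does not pass through the bar decomposition or the formula of Proposition~\ref{asfgdfsghbn}.(7). Instead it fixes an arbitrary set of generators $\{g_i\in V_{s_i}\}$ and the tameness thresholds $0<t_0<\cdots<t_k$, and argues directly that for any $t$ one can find $t'>t$ with each $C(s_i,t')$ lying in the same interval $[t_{j-1},t_j)$ as $C(s_i,t)$; closedness enters to guarantee that $C(s_i,-)$ does not jump past the next threshold (or to $\infty$) immediately to the right of $t$. Since the transition maps within such intervals are isomorphisms, the generated subspaces $V_C[t]$ and $V_C[t']$ then have equal rank. Your approach buys a shorter and more transparent argument by leaning on the structure theorem and the already-established counting formula; the paper's approach is more intrinsic, needing only generators rather than a full bar decomposition, but is correspondingly less explicit about the actual step-structure of the function.
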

\begin{proof}
	It is enough to show that  for any $t$, there is $t<t'$ for which  $\text{\rm rank}(V_C[t])=\text{\rm rank}(V_C[t'])$. Let $0<t_0<\cdots<t_k$ be  such that  $V_{a\leq b}\colon V_{a}\to V_{b}$  may fail  to be an isomorphism only if  $a<t_i\leq b$ for some $i$. Choose a sequence  $\{g_i\in V_{s_i}\}_{1\leq i\leq n}$ of generators of $V$. Consider  only these $C(s_i,t)$ which lie in $[0,\infty)$. Since $C$ is closed, there are $t<t'$ for which both $C(s_i,t)$ and $C(s_i,t')$ are in one of the intervals $[0,t_0)$,\ldots, $[t_n,\infty)$. Consequently the transition functions $V_{C(s_i,t)\leq C(s_i,t')}$, for all $i$,  are  isomorphisms and  $V_C[t]$
	and $V_C[t']$ have the same rank.
\end{proof}

Here is the  key relation between the stable rank and the shift operation:
\begin{theorem}\label{asfsafdshj}
	Let  $C$ be a contour and  $V$ be in $\text{\rm Tame}([0,\infty),\text{\rm Vec}_K)$. Then for $t<t'$ in $[0,\infty)$:
	\[ \text{\rm rank}(V_C[t])\geq \widehat{\text{\rm rank}}_{C}(V)(t)\geq  \text{\rm rank}(V_C[t']).\]
\end{theorem}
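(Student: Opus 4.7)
The plan is to prove the two inequalities of the theorem separately. The right-hand inequality $\widehat{\text{\rm rank}}_{C}(V)(t) \leq \text{\rm rank}(V_C[t])$ is straightforward: by Proposition~\ref{asfgdfsghbn}.(3) the inclusion $V_C[t]\hookrightarrow V$ is a $t$-equivalence with respect to $C$, and composing with the identity on $V$ (which is trivially a $0$-equivalence) produces a zig-zag $V\xrightarrow{\mathrm{id}} V\leftarrow V_C[t]$ witnessing $V$ and $V_C[t]$ to be $t$-equivalent. Hence $d_C(V,V_C[t])\leq t$, and by the definition of $\widehat{\text{\rm rank}}_{C}(V)(t)$ as the minimum of $\text{\rm rank}(W)$ over $W$ with $d_C(V,W)\leq t$, we obtain the claimed inequality.

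For the left-hand inequality $\widehat{\text{\rm rank}}_{C}(V)(t)\geq \text{\rm rank}(V_C[t'])$, I fix any $W$ with $d_C(V,W)\leq t$ and show $\text{\rm rank}(W)\geq \text{\rm rank}(V_C[t'])$. The set of $\epsilon$'s for which $V$ and $W$ are $\epsilon$-equivalent is upward closed in $[0,\infty)$ (an $\epsilon$-equivalence stays an $\epsilon''$-equivalence for $\epsilon''\geq\epsilon$, by post-composing the comparison maps with the appropriate transition in $V$) and has infimum $d_C(V,W)\leq t<t'$. The strict inequality $t<t'$ is used exactly here: this infimum need not be attained, but it does force some $\epsilon<t'$ to lie in the set. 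Unpacking Definition~\ref{sdgsfghs}.(2) for such an $\epsilon$ yields a tame $X$ and maps $V\xrightarrow{f} X\xleftarrow{g} W$ with $f$ an $\epsilon_1$-equivalence, $g$ an $\epsilon_2$-equivalence, and $\epsilon_1+\epsilon_2\leq\epsilon$.

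The rest assembles the chain $\text{\rm rank}(W)\geq \text{\rm rank}(X_C[\epsilon_2])\geq \text{\rm rank}(V_C[\epsilon_1+\epsilon_2])\geq \text{\rm rank}(V_C[t'])$. For the first link, factor $g$ as $W\twoheadrightarrow \text{im}(g)\hookrightarrow X$. The monomorphism $\text{im}(g)\hookrightarrow X$ inherits $\epsilon_2$-equivalence from $g$ by post-composing the comparison maps $\psi_a\colon X_a\to W_{C(a,\epsilon_2)}$ with the surjection $W\twoheadrightarrow \text{im}(g)$, so Proposition~\ref{asfgdfsghbn}.(4) gives $X_C[\epsilon_2]\subseteq \text{im}(g)$. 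Monotonicity of rank~(\ref{asdgsdfhdfjn}) applied to the tame sub-persistence-module $X_C[\epsilon_2]\subseteq \text{im}(g)$ yields $\text{\rm rank}(X_C[\epsilon_2])\leq \text{\rm rank}(\text{im}(g))$, and the epimorphism $W\twoheadrightarrow \text{im}(g)$ together with surjectivity of the induced map on $H_0$ gives $\text{\rm rank}(\text{im}(g))\leq\text{\rm rank}(W)$. The last link follows from $\epsilon_1+\epsilon_2\leq\epsilon<t'$ combined with $V_C[s]\supseteq V_C[s']$ for $s\leq s'$ and monotonicity of rank.

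The principal obstacle is the middle link $\text{\rm rank}(X_C[\epsilon_2])\geq \text{\rm rank}(V_C[\epsilon_1+\epsilon_2])$. The $\epsilon_1$-equivalence $f$ provides pointwise (and in general non-natural) maps $\phi_a\colon X_a\to V_{C(a,\epsilon_1)}$ with $\phi_a\circ f_a=V_{a\leq C(a,\epsilon_1)}$. Applying $\phi$ at parameter $C(s,\epsilon_2)$ to a generator $v\in V_s$ of $V$ writes the generator $V_{s\leq C(s,\epsilon_1+\epsilon_2)}(v)$ of $V_C[\epsilon_1+\epsilon_2]$ as
\[
V_{C(C(s,\epsilon_2),\epsilon_1)\leq C(s,\epsilon_1+\epsilon_2)}\bigl(\phi_{C(s,\epsilon_2)}(X_{s\leq C(s,\epsilon_2)}(f_s(v)))\bigr),
\]
exhibiting it as a $\phi$-image of the element $X_{s\leq C(s,\epsilon_2)}(f_s(v))\in X_C[\epsilon_2]$. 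Because the $\phi_a$ are not natural in $a$, this correspondence does not assemble into a persistence-module morphism $X_C[\epsilon_2]\to V_C[\epsilon_1+\epsilon_2]$; the bound on numbers of generators must therefore be upgraded to a rank bound by invoking bar decompositions, refining the above correspondence into an injective matching from bars of $V$ surviving the shift $C(-,\epsilon_1+\epsilon_2)$ onto distinct bars of $X$ surviving $C(-,\epsilon_2)$. This induced-matching step is the technical heart of the proof.
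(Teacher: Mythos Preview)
Your treatment of the inequality $\text{rank}(V_C[t])\geq\widehat{\text{rank}}_C(V)(t)$ is correct and matches the paper's argument.

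For the other inequality your proposal has a genuine gap: the ``middle link'' $\text{rank}(X_C[\epsilon_2])\geq\text{rank}(V_C[\epsilon_1+\epsilon_2])$ is never proved. You correctly observe that the lifts $\phi_a$ are not natural, so the displayed formula only shows that each generator of $V_C[\epsilon_1+\epsilon_2]$ is a $\phi$-image of \emph{some} element of $X_C[\epsilon_2]$; this is far weaker than a rank comparison, and you defer the actual work to an unspecified ``induced-matching step''. As written, the argument is incomplete, and nothing in the paper supplies such a matching.

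The paper's route is different and avoids this detour entirely---no bar decompositions or matchings are used. Choose a minimal generating set $\{w_i\in W_{s_i}\}_{i=1}^{n}$ of $W$ (so $n=\text{rank}(W)$), push the $w_i$ to $x_i:=g_{s_i}(w_i)\in X_{s_i}$, and then apply the lifts $\beta_{s_i}\colon X_{s_i}\to V_{C(s_i,\epsilon_1)}$ coming from the $\epsilon_1$-equivalence $f$ to obtain $v_i:=\beta_{s_i}(x_i)\in V_{C(s_i,\epsilon_1)}$. Set $V':=\langle v_i\rangle\subset V$. Since $V'$ has at most $n$ generators, $\text{rank}(V')\leq n=\text{rank}(W)$. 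A diagram chase using both lifts $\alpha_a$ and $\beta_{C(a,\epsilon_2)}$ at the parameters $a$, $C(a,\epsilon_2)$, $C(C(a,\epsilon_2),\epsilon_1)$, $C(a,\epsilon_1+\epsilon_2)$ shows that the image of every transition $V_{a\leq C(a,\epsilon_1+\epsilon_2)}$ lies in $V'$; hence $V_C[t']\subset V_C[\epsilon_1+\epsilon_2]\subset V'$, and monotonicity of the rank (see~\ref{asdgsdfhdfjn}) yields
\[
\text{rank}(V_C[t'])\leq\text{rank}(V')\leq\text{rank}(W).
\]
The idea you are missing is that the non-natural lifts should be used once, at the parameters $s_i$ where the generators of $W$ live, to produce a \emph{subobject of $V$ with few generators}, rather than to compare ranks across a pointwise map that is not a morphism of persistence modules.
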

\begin{proof}
	Since $V_C[t]\subset V$ is a $t$-equivalence (see Proposition~\ref{asfgdfsghbn}.(3)), $d_C(V,V_C[t])\leq t$. This  gives the first inequality.
	
	Let $W$ be a tame $[0,\infty)$-parametrized  $K$ vector space for which
	$d_C(V,W)\leq t $ and $\text{rank}(W)=\widehat{\text{\rm rank}}_{C}V(t)$.
	Since $d_C(V,W)<t' $, by definition there are  maps 
	$f\colon V\to X\leftarrow W:\!g$
	in  $\text{Tame}([0,\infty),\text{Vec}_K)$
	where $f$ is $\tau_1$-equivalence, $g$ is $\tau_2$-equivalence, and $\tau_1+\tau_2<t'$.
	For any  $a$ in $[0,\infty)$ such that $C(a,\tau_1)<\infty$, consider the following commutative diagram
	where the vertical arrows are the transition functions  and $\beta_a$  is  the  lift given by the fact that $f$ is $\tau_1$-equivalence:
	\[\xymatrix@R=13pt@C=18pt{
		V_a\rto^{f}\dto &X_a\dto\dlto|-{\beta_a} & W_a\dto\lto_{g} \\
		V_{C(a,\tau_1)}\rto_{f} & X_{C(a,\tau_1)} & W_{C(a,\tau_1)}\lto^{g}
	}\]
	Let $n=\text{rank}(W)$ and $\{w_i\in W_{s_i}\}_{1\leq i\leq n}$ be a minimal set of generators of $W$.
	Set $x_i:=g(w_i)$. For any $i$ in the set $I:=\{i\ |\  C(s_i,\tau_1)<\infty\}$, define $v_i:=\beta_{s_i}(g(w_i))\in V_{C(s_i,\tau_1)}$ and  $V':=\langle v_i\ |\ i\in I\rangle\subset V$. 
	We claim that the following  inclusions hold:  \[V_C[t']\subset V_C[\tau_1+\tau_2]\subset V'\subset V,\]
	which imply $\widehat{\text{\rm rank}}_{C}V(t)=n\geq  \text{rank}(V')\geq \text{rank}(V_C[t'])$, proving the second inequality.
	The first inclusion is a consequence of  $\tau_1+\tau_2<t'$.  The last inclusion is by definition. It remains to show the middle inclusion  $V_C[\tau_1+\tau_2]\subset V'$.
	
	For any $a$ in $[0,\infty)$ such that
	$C(a,\tau_1+\tau_2)<\infty$  we have  the following commutative diagram where all the horizontal arrows indicate the transition functions, vertical arrows are functions induced by $f$ and $g$, and $\beta_{C(a,\tau_2)}$ and $\alpha_a$ are  lifts  guaranteed by the fact that $f$ is $\tau_1$-equivalence and  $g$ is $\tau_2$-equivalence, respectively:
	\[\xymatrix@C=30pt@R=15pt{
		V_a\dto _{f_a} \rto & V_{C(a,\tau_2)}\dto_{f}\rrto
		&&  V_{C(C(a,\tau_2),\tau_1)}\rto\dto^{f} & V_{C(a,\tau_1+\tau_2)}\dto^{f}\\
		X_a\rto\drto|{\alpha_a} & X_{C(a,\tau_2)}\rrto
		\urrto|{\beta_{C(a,\tau_2)}} 
		& & X_{C(C(a,\tau_2),\tau_1)}\rto & X_{C(a,\tau_1+\tau_2)}
		\\
		W_a\uto^{g_a}\rto& W_{C(a,\tau_2)}\uto_{g}\rrto &  &W_{C(C(a,\tau_2),\tau_1)}\rto\uto_{g} & W_{C(a,\tau_1+\tau_2)}\uto_{g}
	}\]
	Commutativity of this diagram implies that, for every such $a$, the image of  the transition function  $V_{a\leq C(a,\tau_1+\tau_2)}$
	belongs to  $V'$. Since $V_C[\tau_1+\tau_2]$ is generated by  these images, the inclusion $V_C[\tau_1+\tau_2]\subset V'$ holds.
\end{proof}

According to  Theorem~\ref{asfsafdshj}, the functions  $\widehat{\text{\rm rank}}_{C}(V)$ and $\text{\rm rank}(V_C[-])$ agree for all but finitely many points:
\begin{corollary}
	Let $C$ be a contour and $V$ be in   $\text{\rm Tame}([0,\infty),\text{\rm Vec}_K)$. Then
	there are   $0<\tau_0<\cdots< \tau_n$ in  $[0,\infty)$
	such that the functions $\widehat{\text{\rm rank}}_{C}(V)$ and $\text{\rm rank}(V_C[-])$ agree on 
	the open intervals $(0,\tau_0)$,\ldots, $(\tau_{i},\tau_{i+1})$,\ldots,  $(\tau_n,\infty)$. 
	In particular, for  $p\geq 1$:
	\[d_{\bowtie}\left(\widehat{\text{\rm rank}}_{C}(V), \text{\rm rank}(V_C[-])\right)=0=L_p\left(\widehat{\text{\rm rank}}_{C}(V), \text{\rm rank}(V_C[-])\right).\]
\end{corollary}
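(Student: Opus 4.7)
The proof is a short deduction from Theorem~\ref{asfsafdshj} together with the step-function nature of both sides. The plan is to first identify a finite set outside of which the two functions provably agree, and then extract the two metric statements from that pointwise agreement.

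First I would note that both $\widehat{\text{rank}}_{C}(V)$ and $\text{rank}(V_C[-])$ are non-increasing functions $[0,\infty)\to [0,\infty)$ whose values lie in the finite set $\{0,1,\ldots,\text{rank}(V)\}$, and are therefore piecewise constant with only finitely many jumps. (The fact that the values are bounded by $\text{rank}(V)$ follows from taking $W=V$ in the defining infimum and from the monotonicity of rank~\ref{asdgsdfhdfjn}.) Let $0<\tau_0<\cdots<\tau_n$ be any finite set of points that contains every jump of either function; then both functions are constant on each open interval $(\tau_{i},\tau_{i+1})$ and on $(\tau_n,\infty)$.

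Next, on any such open interval I would pick $t<t'$ in $(\tau_{i},\tau_{i+1})$ and invoke Theorem~\ref{asfsafdshj} to obtain
\[\text{rank}(V_C[t]) \geq \widehat{\text{rank}}_{C}(V)(t)\geq \text{rank}(V_C[t']).\]
Since $t$ and $t'$ belong to the same constancy interval of $\text{rank}(V_C[-])$, the left and right members coincide, which forces $\widehat{\text{rank}}_{C}(V)(t)=\text{rank}(V_C[t])$. This gives pointwise agreement of the two functions on the complement of the finite set $\{\tau_0,\ldots,\tau_n\}$.

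For the metric consequences I would reason as follows. Because the two functions differ on at most a finite (hence Lebesgue-null) set and are both bounded, the integrand defining $L_p$ vanishes almost everywhere, so $L_p(\widehat{\text{rank}}_{C}(V),\text{rank}(V_C[-]))=0$. For the interleaving distance I would write $f=\widehat{\text{rank}}_{C}(V)$ and $g=\text{rank}(V_C[-])$, and check that every $\epsilon>0$ lies in the set $S$ from the definition of $d_{\bowtie}$. Theorem~\ref{asfsafdshj} applied with $t'=t+\epsilon$ gives $f(t)\geq g(t+\epsilon)$, while the same theorem yields $g(t)\geq f(t)$ pointwise, which combined with monotonicity of $f$ gives $g(t)\geq f(t)\geq f(t+\epsilon)$. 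Hence $\epsilon\in S$ for every $\epsilon>0$, so $\inf S=0$. There is no real obstacle here; the only subtlety worth double-checking is the interleaving inequality at the corner cases near the $\tau_i$, which is handled uniformly by the pointwise bound $g\geq f$ and the monotonicity of $f$.
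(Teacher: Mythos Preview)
Your argument is correct and is exactly the deduction the paper intends: the paper gives no explicit proof of this corollary, merely prefacing it with ``According to Theorem~\ref{asfsafdshj}, the functions $\widehat{\text{rank}}_{C}(V)$ and $\text{rank}(V_C[-])$ agree for all but finitely many points,'' relying on the already-established fact that both functions are non-increasing natural-number-valued step functions. Your write-up fills in precisely those details, including the sandwich $g(t)\geq f(t)\geq g(t')$ on a constancy interval and the direct verification that every $\epsilon>0$ lies in the interleaving set $S$.
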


Theorem~\ref{asfsafdshj} together with Proposition~\ref{sdGSGFJHDGHJD} gives:
\begin{corollary}\label{asfdfghfdghjn}
	Assume $C$ is a closed contour.  Then $\widehat{\text{\rm rank}}_{C}(V)= \text{\rm rank}(V_C[-])$ for any $V$  in   $\text{\rm Tame}([0,\infty),\text{\rm Vec}_K)$.
\end{corollary}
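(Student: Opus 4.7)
The plan is straightforward: combine Theorem~\ref{asfsafdshj} with the right continuity statement of Proposition~\ref{sdGSGFJHDGHJD}. Fix $V$ in $\text{Tame}([0,\infty),\text{Vec}_K)$ and a point $t$ in $[0,\infty)$; I want to show $\widehat{\text{rank}}_{C}(V)(t)=\text{rank}(V_C[t])$.

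First, the inequality $\text{rank}(V_C[t])\geq \widehat{\text{rank}}_{C}(V)(t)$ is immediate from the left-hand side of Theorem~\ref{asfsafdshj} (applied with any $t'>t$, or directly: by Proposition~\ref{asfgdfsghbn}.(3) the inclusion $V_C[t]\subset V$ is a $t$-equivalence, so $d_C(V,V_C[t])\leq t$ and hence $\widehat{\text{rank}}_C(V)(t)\leq \text{rank}(V_C[t])$). No closedness assumption is needed for this direction.

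For the reverse inequality I would use the right-hand side of Theorem~\ref{asfsafdshj}: for every $t'>t$ one has $\widehat{\text{rank}}_{C}(V)(t)\geq \text{rank}(V_C[t'])$. Because $C$ is closed, Proposition~\ref{sdGSGFJHDGHJD} provides a threshold $t'>t$ on some interval $[t,\tau_{i+1})$ on which $\text{rank}(V_C[-])$ is constant and equals $\text{rank}(V_C[t])$. Picking any such $t'$ yields $\widehat{\text{rank}}_{C}(V)(t)\geq \text{rank}(V_C[t'])=\text{rank}(V_C[t])$.

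Combining the two inequalities gives the equality for every $t$, i.e.\ $\widehat{\text{rank}}_{C}(V)=\text{rank}(V_C[-])$ as functions $[0,\infty)\to[0,\infty)$. The only place closedness is essential is in invoking Proposition~\ref{sdGSGFJHDGHJD} to pass from the strict inequality $t'>t$ in Theorem~\ref{asfsafdshj} to the value at $t$ itself; without it one only gets agreement off a finite set (as noted in the preceding corollary). I do not foresee any real obstacle here — everything has been set up by the previous two results, and the argument is essentially a one-line limit.
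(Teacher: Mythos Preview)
Your proposal is correct and follows exactly the approach the paper intends: the paper states this corollary as an immediate consequence of Theorem~\ref{asfsafdshj} together with Proposition~\ref{sdGSGFJHDGHJD}, and your argument spells out precisely that deduction. The only minor imprecision is the phrasing ``on some interval $[t,\tau_{i+1})$''---more accurately one picks $t'$ in the half-open interval $[\tau_i,\tau_{i+1})$ containing $t$---but the reasoning is sound.
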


We finish this section with:
\begin{corollary}\label{dsfadfssfhf}
	Assume  $C$ is a closed contour such that 
	$C(a,0)=a$ for any $a$ in $[0,\infty]$.
	Let  $V$ be a tame  $[0,\infty)$-parametrized  $K$ vector space. Then:
	\begin{enumerate}
		\item $\widehat{\text{\rm rank}}_{C}V(0)=\text{\rm rank}(V)$.
		\item $\widehat{\text{\rm rank}}_{C}V=0$ if and only if  $V=0$.
	\end{enumerate}
\end{corollary}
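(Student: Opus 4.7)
My plan is to reduce everything to Corollary~\ref{asfdfghfdghjn}, which says that for a closed contour $C$ the stable rank $\widehat{\text{rank}}_C(V)$ agrees pointwise with the function $t\mapsto \text{rank}(V_C[t])$. Both parts of the corollary then amount to understanding what the shift $V_C[0]$ looks like under the additional hypothesis $C(a,0)=a$.

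For part (1), I would first pick a sequence of generators $\{g_i\in V_{s_i}\}_{1\leq i\leq n}$ of $V$, which exist because $V$ is tame (see the subsection on the rank and the number of generators). By Proposition~\ref{asfgdfsghbn}.(1), the shift $V_C[0]$ is generated by $\{V_{s_i\leq C(s_i,0)}(g_i)\mid C(s_i,0)<\infty\}$. Since $C(s_i,0)=s_i<\infty$ and the transition $V_{s_i\leq s_i}$ is the identity, the generating set is simply $\{g_i\}_{1\leq i\leq n}$, and therefore $V_C[0]=V$. Applying Corollary~\ref{asfdfghfdghjn} at $t=0$ then yields
\[\widehat{\text{rank}}_C V(0)=\text{rank}(V_C[0])=\text{rank}(V).\]

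For part (2), one direction is immediate: if $V=0$, then every $W$ in $\text{Tame}([0,\infty),\text{Vec}_K)$ is $0$-equivalent to $V$ (e.g.\ via the zero object itself), so $d_C(V,W)=0$ for all $W$, forcing $\widehat{\text{rank}}_C V$ to be the zero function (one may also read this off from $V_C[t]=0$ for all $t$). Conversely, if $\widehat{\text{rank}}_C V=0$, then in particular $\widehat{\text{rank}}_C V(0)=0$, and by part (1) we get $\text{rank}(V)=0$; the basic fact recalled in the subsection on the rank ($\text{rank}(V)=0\Leftrightarrow V=0$) finishes the argument.

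There is no real obstacle here—the corollary is essentially a sanity check that, under the normalization $C(\cdot,0)=\mathrm{id}$, the stable rank genuinely extends the discrete rank. The only point to be careful about is that the generators $g_i$ live at finite parameters $s_i\in[0,\infty)$, which is exactly what makes the condition $C(s_i,0)<\infty$ automatic and lets the shift at $t=0$ recover all of $V$.
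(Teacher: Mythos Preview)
Your proof is correct and follows essentially the same route as the paper: both use Corollary~\ref{asfdfghfdghjn} together with Proposition~\ref{asfgdfsghbn}.(1) to identify $\widehat{\text{rank}}_C V(0)=\text{rank}(V_C[0])=\text{rank}(V)$, and then reduce part (2) to part (1) via the value at $0$. Your write-up just makes the computation $V_C[0]=V$ more explicit than the paper does.
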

\begin {proof}
Corollary~\ref{asfdfghfdghjn}    and Proposition~\ref{asfgdfsghbn}.(1) imply 
$\widehat{\text{\rm rank}}_{C}V(0)=\text{rank}(V_{C}[0])=\text{rank}(V)$.
Since 
$\widehat{\text{\rm rank}}_{C}V$ is  non-increasing, the identity  $\widehat{\text{\rm rank}}_{C}V=0$
is equivalent to  $\widehat{\text{\rm rank}}_{C}V(0) \allowbreak =0$, which 
by  statement (1) is equivalent to $\text{rank}(V)=0$,  proving (2).
\end{proof}
\section{Life span}\label{adfsdfhgfhn}
Let 
$s<e$ be  in $[0,\infty]$.  Since $\text{rank}(K(s,e))=1$, then, for a contour $C$, the value of 
$\widehat{\text{rank}}_{C} K(s,e)(t)$ is either $1$ or $0$.
As the function $\widehat{\text{rank}}_{C} K(s,e) $ is non-increasing,
there is $l$ in $[0,\infty]$ such that:
\[\widehat{\text{rank}}_{C}K(s,e)(t)=\begin{cases} 1, & \text{if } t<l\\
0, & \text{if } t> l.
\end{cases}\]
We define  $\text{life}_C K(s,e):=l$   and  call this element in $[0,\infty]$ the {\bf life span} of  $K(s,e)$. If $l=\text{life}_C K(s,e) <\infty$, then the value $\widehat{\text{rank}}_{C}K(s,e)(l)$
can be either $1$ or $0$, depending  on the contour. 
If  $C$ is closed, then according to  Theorem~\ref{agfhjnd}.(1),
$\widehat{\text{rank}}_{C}K(s,e)(l)=0$. This with the additivity property in Theorem~\ref{agfhjnd}.(3)
gives:
\begin{proposition}\label{asfdsdfgfd}
If  $C$ is  a closed contour, then:
\[  \widehat{\text{\rm rank}}_{C}\left(\bigoplus_{i=1}^n K(s_i,e_i)\right)(t) =|\{i\ |\ t<\text{\rm life}_CK(s_i,e_i)\}|.\]
\end{proposition}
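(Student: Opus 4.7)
The plan is to reduce the claim to a statement about single bars and then unwind the definition of the life span, using the two nontrivial properties of $\widehat{\text{rank}}_C$ for closed contours established in Theorem~\ref{agfhjnd}: linearity and right continuity.

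First, apply Theorem~\ref{agfhjnd}.(2) iteratively to the finite direct sum $\bigoplus_{i=1}^n K(s_i,e_i)$. This gives, for every $t \in [0,\infty)$,
\[\widehat{\text{rank}}_C\Bigl(\bigoplus_{i=1}^n K(s_i,e_i)\Bigr)(t) = \sum_{i=1}^n \widehat{\text{rank}}_C K(s_i,e_i)(t).\]
Hence it suffices to show that, for each single bar $K(s_i,e_i)$ and each $t \in [0,\infty)$, the value $\widehat{\text{rank}}_C K(s_i,e_i)(t)$ is $1$ if $t < \text{life}_C K(s_i,e_i)$ and $0$ otherwise; summing indicator values then produces exactly the cardinality $|\{i \mid t < \text{life}_C K(s_i,e_i)\}|$.

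Next, fix a bar $K(s,e)$ and set $l := \text{life}_C K(s,e)$. By the definition of life span recalled immediately before the proposition, $\widehat{\text{rank}}_C K(s,e)(t) = 1$ for $t < l$ and $\widehat{\text{rank}}_C K(s,e)(t) = 0$ for $t > l$. The only ambiguity is at $t = l$ when $l < \infty$, where the value can a priori be $0$ or $1$. Here Theorem~\ref{agfhjnd}.(1) enters: since $C$ is closed, $\widehat{\text{rank}}_C K(s,e)$ is right continuous, so
\[\widehat{\text{rank}}_C K(s,e)(l) = \lim_{t \to l^+}\widehat{\text{rank}}_C K(s,e)(t) = 0.\]
Thus $\widehat{\text{rank}}_C K(s,e)(t) = 1$ if and only if $t < l$, which is the indicator description needed above.

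There is no real obstacle in this argument; it is a bookkeeping consequence of Theorem~\ref{agfhjnd}. The only subtle point, which deserves to be stated explicitly, is the use of right continuity to pin down the value at the boundary $t = \text{life}_C K(s_i,e_i)$; without closedness of $C$ the equality in the proposition could fail precisely at those finitely many threshold values of $t$.
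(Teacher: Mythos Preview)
Your proof is correct and follows essentially the same route as the paper: the paper also reduces to single bars, invokes right continuity from Theorem~\ref{agfhjnd}.(1) to force $\widehat{\text{rank}}_{C}K(s,e)(l)=0$ at the threshold, and then appeals to the additivity in Theorem~\ref{agfhjnd} to assemble the direct sum formula. The only cosmetic difference is that the paper cites the additivity as Theorem~\ref{agfhjnd}.(3) rather than (2), but the substance of the argument is identical.
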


According to Proposition~\ref{asfdsdfgfd}, the  stable rank, with respect to a closed contour,    counts  bars whose life span strictly exceeds $t$.  Here is how to calculate the life span for regular contours:
\begin{proposition}\label{sasfagdfhgb}
Let $s<e$ be in $[0,\infty]$ and $\alpha$ be in $[0,\infty)$.
If  $C$ is a regular contour (see Definition~\ref{perscont}), then:
\begin{align*}
	\text{\rm life}_C K(s,e) &= \begin{cases}
		\infty, &\text{if } e=\infty\\
		C(s,-)^{-1}(e),&\text{if } e<\infty,
	\end{cases} \\
	\text{\rm life}_{C/\alpha} K(s,e) &= 
	\begin{cases}  0, &\text{if }   \alpha\leq s\\
		C(s,-)^{-1}(\alpha), & \text{if }  s<\alpha\leq e\\
		C(s,-)^{-1}(e), &\text{if }  e<\alpha.
	\end{cases}
\end{align*}
\end{proposition}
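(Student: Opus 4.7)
My plan is to reduce the entire statement to the single observation that for one bar, Proposition~\ref{asfdsdfgfd} forces $\widehat{\text{rank}}_C K(s,e)(t)=1$ if $C(s,t)<e$ and $0$ otherwise, so that
\[ \text{life}_C K(s,e)=\sup\{t\in[0,\infty)\mid C(s,t)<e\}, \]
and likewise with $C/\alpha$ in place of $C$. To invoke that proposition I first need the relevant contour to be closed: regular contours are closed (the remark immediately following Definition~\ref{perscont}), and truncations of closed contours are closed by the discussion in~\ref{pt trancations}, so both $C$ and $C/\alpha$ qualify. I then exploit regularity: $C(s,-)\colon[0,\infty)\to[0,\infty]$ is a strictly increasing bijection onto $[s,\infty)$. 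This immediately gives $C(s,0)=s$ and, for every $b\in[s,\infty)$, the equivalence $C(s,t)<b \Leftrightarrow t<C(s,-)^{-1}(b)$.

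For the first formula, I would argue by cases on $e$. If $e=\infty$, then $C(s,t)\in[s,\infty)$ is always finite, so the supremum above is $\infty$. If $e<\infty$, then $s<e$ puts $e$ inside the image of $C(s,-)$ and $\{t\mid C(s,t)<e\}=[0,C(s,-)^{-1}(e))$, whose supremum is $C(s,-)^{-1}(e)$.

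For the truncated formula I first note that $(C/\alpha)(s,t)$ equals $C(s,t)$ when $C(s,t)<\alpha$ and equals $\infty$ otherwise, so $(C/\alpha)(s,t)<e$ iff $C(s,t)<\min(\alpha,e)$; this reduces everything to the same kind of supremum as before. I then split into the three cases from the statement. If $\alpha\leq s$, then $C(s,t)\geq C(s,0)=s\geq\alpha\geq\min(\alpha,e)$ for every $t$, so the set is empty and the life span is $0$. If $s<\alpha\leq e$, then $\min(\alpha,e)=\alpha\in(s,\infty)$ lies in the image of $C(s,-)$ and the inverse formula yields $C(s,-)^{-1}(\alpha)$. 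Finally, if $e<\alpha$, then $e<\infty$ (since $\alpha\in[0,\infty)$), $\min(\alpha,e)=e$ and the answer is $C(s,-)^{-1}(e)$. There is essentially no serious obstacle; the only point requiring care is checking that each claimed inverse value actually lies in the domain $[0,\infty)$ of $C(s,-)^{-1}$, which is exactly what forces the degenerate $\alpha\leq s$ case to collapse to $0$ rather than producing a meaningless negative value.
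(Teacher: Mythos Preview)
Your argument is correct and follows essentially the same route as the paper: both proofs rest on the identity $\widehat{\text{rank}}_{C}K(s,e)(t)=1$ iff $C(s,t)<e$ for a closed contour, and then read off the life span using the regularity of $C(s,-)$. Your treatment of the truncated case via $\min(\alpha,e)$ is a clean way to organize the three subcases that the paper leaves implicit.

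One citation needs fixing, though. You invoke Proposition~\ref{asfdsdfgfd} to obtain ``$\widehat{\text{rank}}_C K(s,e)(t)=1$ if $C(s,t)<e$ and $0$ otherwise,'' but that proposition says $\widehat{\text{rank}}_C K(s,e)(t)=1$ iff $t<\text{life}_C K(s,e)$, which is essentially the definition of life span and would make your argument circular. The statement you actually need is Theorem~\ref{agfhjnd}.(3), applied to a single bar; that is also what the paper cites. With this correction your proof stands.
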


\begin{proof}
According to Theorem~\ref{agfhjnd}.(3):
\[\widehat{\text{\rm rank}}_{C}K(s,e)(t)=\begin{cases}
1, &\text{ if } C(s,t)<e\\
0,&\text{ if } C(s,t)\geq e.
\end{cases}\]
This together with the regularity of $C$ implies all the  claimed equalities.
\end{proof}
\begin{corollary}\label{afgdjhdghj}
If  $C$ is  a regular contour, then:
\begin{align*}  
	\text{\rm lim}\left(   \widehat{\text{\rm rank}}_{C}\left(\bigoplus_{i=1}^n K(s_i,e_i)\right) \right) &=
	|\{i\ |\ e_i=\infty\}|.
\end{align*}

\end{corollary}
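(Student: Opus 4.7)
The plan is to chain together the two results stated just above the corollary. Since every regular contour is closed (as noted just after Definition~\ref{perscont}), Proposition~\ref{asfdsdfgfd} applies and gives an explicit formula for the stable rank of a finite sum of bars in terms of life spans:
\[\widehat{\text{\rm rank}}_{C}\left(\bigoplus_{i=1}^n K(s_i,e_i)\right)(t) = \bigl|\{i \mid t < \text{\rm life}_C K(s_i,e_i)\}\bigr|.\]
The task therefore reduces to computing the limit as $t$ grows of the right hand side, i.e.\ identifying which indices $i$ satisfy $t<\text{\rm life}_C K(s_i,e_i)$ for all sufficiently large $t$.

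Next I would invoke Proposition~\ref{sasfagdfhgb} to split the index set $\{1,\ldots,n\}$ into two parts. For indices with $e_i=\infty$, the proposition gives $\text{\rm life}_C K(s_i,e_i)=\infty$, so the strict inequality $t<\text{\rm life}_C K(s_i,e_i)$ holds for every $t\in[0,\infty)$; such indices are always counted. For indices with $e_i<\infty$, the proposition gives $\text{\rm life}_C K(s_i,e_i)=C(s_i,-)^{-1}(e_i)$, which lies in $[0,\infty)$ by the regularity assumption (the image of $C(s_i,-)$ is $[s_i,\infty)$, so the preimage of a finite value is finite). Hence for any $t$ strictly greater than the maximum of these finitely many finite life spans, none of the indices with $e_i<\infty$ contribute.

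Combining these two observations, for $t$ large enough the set $\{i \mid t < \text{\rm life}_C K(s_i,e_i)\}$ is exactly $\{i \mid e_i=\infty\}$, so the eventually constant value of $\widehat{\text{\rm rank}}_{C}\bigl(\bigoplus_{i=1}^n K(s_i,e_i)\bigr)$, which by definition is its limit, equals $|\{i \mid e_i=\infty\}|$. There is no real obstacle here beyond carefully justifying the finiteness of the finite life spans via the regularity of $C$, which is precisely what guarantees that $C(s_i,-)^{-1}(e_i)$ is a well-defined element of $[0,\infty)$ whenever $e_i<\infty$.
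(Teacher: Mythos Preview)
Your proof is correct and is exactly the argument the paper intends: the corollary is stated without proof precisely because it follows immediately from combining Proposition~\ref{asfdsdfgfd} with the life span formula of Proposition~\ref{sasfagdfhgb}, together with the observation that regular contours are closed. There is nothing to add.
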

\section{Regular contours and ampleness}
Let $C$ be a contour. For every $\alpha$ in $[0,\infty]$, we can  take its truncation $C/\alpha$ (see~\ref{pt trancations}).  In this way we get a sequence of contours indexed by $[0,\infty]$ such that for  $\alpha < \beta$ in $[0,\infty]$:
\[\infty =C/0\geq\cdots \geq C/\alpha\geq\cdots\geq C/\beta\geq\cdots\geq C/\infty=C.\]
Each of these contours  induces a pseudometric on $\text{Tame}([0,\infty),\text{Vec}_K)$ as defined in~\ref{sdgsfghs}. In this way, according to Proposition~\ref{adsfgdsfhgbd}, we obtain a sequence of pseudometrics
$\{d_{C/\alpha}\}_{\alpha\in [0,\infty]}$. Furthermore, for all $\alpha<\beta$ in $[0,\infty]$ and  $V,W$ in  $\text{\rm Tame}([0,\infty),\text{\rm Vec}_K)$, Proposition~\ref{asdfgdsfhgfdn}.(2) gives the following  inequalities: 
\[
0=d_{C/0}(V,W)\leq \cdots \leq d_{C/\alpha}(V,W)\leq\cdots\leq  d_{C/\beta}(V,W)\leq\cdots\leq d_{C}(V,W).
\]
A  contour therefore induces  a non-decreasing  sequence of pseudometrics on  $\text{Tame}([0,\infty),\text{Vec}_K)$, leading to hierarchical stabilization (Definition~\ref{asfgdsfghs}.(4)):
\[\xymatrix@R=35pt{
\text{Contours}\ar[d]_-{\{d_{-/\alpha}\}_{\alpha\in [0,\infty]}}\\
\text{Sequences of  pseudometrics on $\text{Tame}([0,\infty),\text{Vec}_K)$}
}
\ \ 
\xymatrix@R=35pt{
\text{Tame}([0,\infty),\text{Vec}_K)\ar[d]_{\overline{\text{rank}}_C} \\ {\mathcal M}_2 
}
\]

We are now ready to state and prove our key ampleness  result (see Definition~\ref{sdfgdghdgjn} and discussion after):

\begin{theorem}\label{adfgsdfg} Consider the category  $\text{\rm Tame}([0,\infty),\text{\rm Vec}_K)$.
If $C$ is a regular contour, then the sequence of pseudometrics $\{d_{C/\alpha}\}_{\alpha\in [0,\infty]}$ is ample for the rank  function $\text{\rm rank}\colon \text{\rm Tame}([0,\infty),\text{\rm Vec}_K)\to {\mathbf N}$.
\end{theorem}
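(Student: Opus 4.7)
The plan is to verify the two clauses of Definition~\ref{sdfgdghdgjn}. Admissibility comes for free: the display just before the theorem shows that $\{d_{C/\alpha}\}_{\alpha\in[0,\infty]}$ is non-decreasing in $\alpha$, so Proposition~\ref{sgdsths} applies. The direction $V\cong W\Rightarrow\overline{\text{rank}}(V)=\overline{\text{rank}}(W)$ is immediate: any isomorphism is a $0$-equivalence with respect to each $C/\alpha$, so $d_{C/\alpha}(V,W)=0$, which via the triangle inequality forces the closed balls around $V$ and around $W$ used to define $\widehat{\text{rank}}_{C/\alpha}$ to coincide. All the substance lies in the converse.

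For the converse, invoke the structure theorem to write $V\cong\bigoplus_{i=1}^{n}K(s_i,e_i)$ and $W\cong\bigoplus_{j=1}^{m}K(s'_j,e'_j)$; it then suffices to show the two multisets of pairs coincide. Since a regular contour is closed and truncations of closed contours are closed, each $C/\alpha$ is closed, so Propositions~\ref{asfdsdfgfd} and~\ref{sasfagdfhgb} combine to give
\[F_V(\alpha,t):=\overline{\text{rank}}(V)(\alpha,t)=|\{i : C(s_i,t)<\alpha \text{ and } C(s_i,t)<e_i\}|,\]
and similarly for $W$. The reconstruction will proceed in two rounds: first the multiset of starts, then, for each start $s$, the multiset of ends.

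For round one, set $t=0$ and use $C(s_i,0)=s_i$ (valid for regular contours since the image of $C(s_i,-)$ is $[s_i,\infty)$) to get $F_V(\alpha,0)=|\{i:s_i<\alpha\}|$; the jump locations and sizes of this non-decreasing integer-valued step function in $\alpha$ determine the multiset $\{s_i\}$. For round two, fix a start $s$ and, for each $t$, let $J(s,t)$ denote the size of the jump of $\alpha\mapsto F_V(\alpha,t)$ at $\alpha=C(s,t)$. Regularity makes $C(-,t)$ a strictly increasing injection, so the only indices producing this jump are those with $s_i=s$ and $C(s,t)<e_i$; consequently $J(s,t)=|\{i:s_i=s,\ t<\text{life}_C K(s,e_i)\}|$. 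As a function of $t$, this is a non-increasing integer-valued step function whose limit at $t\to\infty$ counts bars with $s_i=s$ and $e_i=\infty$, while its finite drops occur at $t=C(s,-)^{-1}(e_i)$ for those $i$ with $s_i=s$ and $e_i<\infty$; the regular bijection $C(s,-)\colon[0,\infty)\to[s,\infty)$ then recovers the multiset $\{e_i:s_i=s\}$. Assembling over all $s$ gives the desired equality of bar multisets, whence $V\cong W$. I expect the main obstacle to be this jump-attribution step: it depends crucially on both injectivities supplied by regularity---one of $C(-,t)$ to match a jump location to a unique start, and one of $C(s,-)$ to recover ends from life spans.
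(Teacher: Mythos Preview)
Your argument is correct. The admissibility and forward implication are handled exactly as in the paper, and your explicit formula $F_V(\alpha,t)=|\{i:C(s_i,t)<\alpha\text{ and }C(s_i,t)<e_i\}|$ follows directly from Theorem~\ref{agfhjnd}.(3) applied to the closed contour $C/\alpha$. The two-round reconstruction---starts from the slice $t=0$, then ends from the $\alpha$-jumps of $F_V(\cdot,t)$ at $\alpha=C(s,t)$---is sound, and you correctly isolate the two places where regularity is indispensable: injectivity of $C(-,t)$ to attribute an $\alpha$-jump to a unique start, and the bijection $C(s,-)\colon[0,\infty)\to[s,\infty)$ to translate the $t$-drops of $J(s,\cdot)$ back into ends.

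The paper's own proof takes a genuinely different route. Rather than reconstructing the whole bar multiset at once, it first reduces to the case of finite bars (replacing each infinite bar $K(s_j,\infty)$ by $K(s_j,\beta)$ for a large common $\beta$, checking that the hypothesis survives this truncation), and then inducts on the rank: using $\widehat{\text{rank}}_C$ it matches the maximal life span $l_{\max}$, then uses a single truncated contour $C/e_{\max}$ to force the maximal end among bars of that life span to agree, whence regularity pins down a common bar $K(s,e_{\max})$ to split off. Your approach is more direct and global---it reads the multiset off in one pass and handles infinite bars via $\lim_{t\to\infty}J(s,t)$ without any preliminary reduction---whereas the paper's inductive peeling is closer in spirit to the proof of the structure theorem itself and uses only one truncation value per step rather than the full jump profile in $\alpha$.
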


\begin{proof}
Let $V$ and $W$ be tame  $[0,\infty)$-parametrized  $K$ vector spaces. Assume, for every $\alpha$ in $[0,\infty]$, $\widehat{\text{\rm rank}}_{C/\alpha}V=\widehat{\text{\rm rank}}_{C/\alpha}W$.   
We need to show  $V$ and $W$ are isomorphic. 

Since $C$ is regular, then it is closed and  $C(a,0)=a$ for all $a$, and hence according to Corollaries~\ref{asfdfghfdghjn} and \ref{dsfadfssfhf}:
\[\text{rank}(V)=\text{rank}(V_C[0])=\widehat{\text{\rm rank}}_{C}V(0)=
\widehat{\text{\rm rank}}_{C}W(0)=\text{rank}(W_C[0])=\text{rank}(W).\]
Thus $V$ and $W$  have the same rank. Assume $V$ is isomorphic to $\oplus_{i=1}^nK(s_i,e_i)$ and $W$ is isomorphic to $\oplus_{i=1}^nK(s'_i,e'_i)$. 
\smallskip

\noindent
{\bf Step 1: Reduction to finite bars.}
According to Corollary~\ref{afgdjhdghj}:
\[|\{i \ |\ e_i=\infty\}|=\text{lim}\left(\widehat{\text{\rm rank}}_{C}V\right)=\text{lim}\left(\widehat{\text{\rm rank}}_{C}W\right)=|\{i \ |\ e'_i=\infty\}|.\]
Thus $V$ and $W$ are isomorphic to, respectively:
\[\bigoplus_{i=1}^{n_1}K(s_i,e_i)\oplus \bigoplus_{j=1}^{n_2}K(s_j,\infty)\ \ \ \ \ \ \ \ \ \ 
\bigoplus_{i=1}^{n_1}K(s'_i,e'_i)\oplus \bigoplus_{j=1}^{n_2}K(s'_j,\infty),\]
where  $e_i,e'_i<\infty $ for  $i=1,\ldots,n_1$. 

Choose $\beta$ in $[0,\infty)$   such that $\beta> e_i, e'_i, s_j, s'_j$ for all $i$ and $j$, and define:
\[V/\beta:=\bigoplus_{i=1}^{n_1}K(s_i,e_i)\oplus \bigoplus_{j=1}^{n_2}K(s_j,\beta)\ \ \ \  \ \ \ \ 
W/\beta:=\bigoplus_{i=1}^{n_1}K(s'_i,e'_i)\oplus \bigoplus_{j=1}^{n_2}K(s'_j,\beta).\]
Note that $V$ and $W$ are isomorphic if and only if $V/\beta$ and $W/\beta$ are isomorphic. Thus to prove the theorem it is enough to show   $V/\beta$ and $W/\beta$ are isomorphic.

We claim that, for every $\alpha$ in $[0,\infty]$, 
$\widehat{\text{\rm rank}}_{C/\alpha}(V/\beta)=\widehat{\text{\rm rank}}_{C/\alpha}(W/\beta)$.
This follows from the assumption  $\widehat{\text{\rm rank}}_{C/\alpha}V=\widehat{\text{\rm rank}}_{C/\alpha}W$,  
the additivity of the stable rank (Theorem~\ref{agfhjnd}.(2)),  and Proposition~\ref{sasfagdfhgb} which gives that for all $s<\beta$:
\[\begin{array}{ccc}
\text{\rm life}_{C/\alpha}K(s,\beta)=C^{-1}(s,\alpha)=\text{\rm life}_{C/\alpha}K(s,\infty),
&\text{if } \alpha\leq \beta,\\
\text{\rm life}_{C/\alpha}K(s,\beta)=C^{-1}(s,\beta)=\text{\rm life}_{C/\beta}K(s,\infty),
&\text{if } \alpha> \beta.
\end{array}
\]

We reduced the theorem to the case when all the bars in the bar decompositions of $V$ and $W$ are finite. 
\smallskip

\noindent
{\bf Step 2: Induction on the rank.}
Assume 
$V$ is  isomorphic to $\oplus_{i=1}^nK(s_i,e_i)$ and 
$W$ is    isomorphic to $\oplus_{i=1}^nK(s'_i,e'_i)$ where $e_i,e'_i<\infty$. We are going to prove by induction 
on the rank  that $V$ and $W$ are isomorphic.  The statement is clear if  $\text{rank}(V)=\text{rank}(W)=0$, since in this case both $V$ and $W$ are isomorphic to  $0$.
Assume $n=\text{rank}(V)>0$.
Let $l_i=\text{life}_C K(s_i,e_i) $ and $l_i'=\text{life}_C K(s'_i,e'_i) $ (see Section~\ref{adfsdfhgfhn}). 
Recall that according to Proposition~\ref{asfdsdfgfd}, for  $t$ in $[0,\infty)$:
\[|\{i\ |\ t < l_i\}|=\widehat{\text{\rm rank}}_{C}V(t)=\widehat{\text{\rm rank}}_{C}W(t)=|\{i\ |\ t < l'_i\}|.\]
It follows that $l_{\text{max}}:=\text{max}\{l_i\ |\ 1\leq i\leq n\}=\text{max}\{l'_i\ |\ 1\leq i\leq n\}$.
Let $e_{\text{max}}:=\text{max}\{e_i\ |\ l_i=l_{\text{max}}\}$ and $e'_{\text{max}}:=\text{max}\{e'_i\ |\ l'_i=l_{\text{max}}\}$. We claim that $e_{\text{max}}=e'_{\text{max}}$. If $e_{\text{max}}<e'_{\text{max}}$, then
$\widehat{\text{\rm rank}}_{C/e_{\text{max}}}V >\widehat{\text{\rm rank}}_{C/e_{\text{max}}}W$ contradicting the assumption. 

Since $C$ is regular,  there is a unique $s$ such that $C(s,l_{\text{max}})=e_{\text{max}}$. Thus both $V$ and $W$ contain the bar of the form $K(s,e_{\text{max}})$ in their bar decompositions. We can then split off this bar and proceed  by induction. 
\end{proof}

\begin{corollary}\label{asfasdfdghfgsn}
Tame  $[0,\infty)$-parametrized  $K$ vector spaces $V$ and $W$ are isomorphic if and only if  $\widehat{\text{\rm rank}}_{C}V=\widehat{\text{\rm rank}}_{C}W$  for all contours $C$.
\end{corollary}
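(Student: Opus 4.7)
The forward direction is immediate from the setup: if $V$ and $W$ are isomorphic in $\text{Tame}([0,\infty),\text{Vec}_K)$, then for any contour $C$ the pseudometric $d_C$ cannot distinguish them (an isomorphism $V \to W$ is a $0$-equivalence with respect to any $C$), so $d_C(V, X) = d_C(W, X)$ for every tame $X$, and therefore $\widehat{\text{rank}}_C V = \widehat{\text{rank}}_C W$ by the defining minimization formula.

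For the converse, the plan is to reduce immediately to Theorem~\ref{adfgsdfg}. I would pick any regular contour, for concreteness the standard contour $C(a,\epsilon) = a + \epsilon$ (which is verified to be regular in Section~\ref{standardcont}). The hypothesis provides $\widehat{\text{rank}}_{C'} V = \widehat{\text{rank}}_{C'} W$ for all contours $C'$, and in particular this holds for each truncation $C/\alpha$ as $\alpha$ ranges over $[0,\infty]$ (each of which is a contour by~\ref{pt trancations}). Unwinding Definition~\ref{asfgdsfghs}.(2), this says exactly that
\[
\overline{\text{rank}}_C(V)(\alpha,t) = \widehat{\text{rank}}_{C/\alpha}(V)(t) = \widehat{\text{rank}}_{C/\alpha}(W)(t) = \overline{\text{rank}}_C(W)(\alpha,t)
\]
for all $(\alpha,t) \in [0,\infty]\times[0,\infty)$, so the hierarchical stabilizations of $V$ and $W$ along $\{d_{C/\alpha}\}_{\alpha\in[0,\infty]}$ coincide.

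Since the standard contour is regular, Theorem~\ref{adfgsdfg} tells us the sequence $\{d_{C/\alpha}\}_{\alpha\in[0,\infty]}$ is ample for the rank function. By Definition~\ref{sdfgdghdgjn}, ampleness is exactly the statement that equality of the hierarchical stabilizations forces $V$ and $W$ to be isomorphic, which completes the proof.

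There is essentially no obstacle at this stage: all the substantive work has been carried out in Theorem~\ref{adfgsdfg}. The only thing to be careful about is the packaging — verifying that the family of pseudometrics indexed by $\alpha$ is precisely the family produced by truncating a single regular contour, and noting that the forward direction requires only the trivial observation that isomorphisms are $0$-equivalences with respect to any contour.
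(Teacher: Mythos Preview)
Your proof is correct and is exactly the argument the paper intends: the corollary is stated without proof immediately after Theorem~\ref{adfgsdfg}, and your reduction---restricting the hypothesis to the truncations $C/\alpha$ of a single regular contour and invoking ampleness---is precisely how it follows. The only cosmetic slip is that $\overline{\text{rank}}_C(V)$ is defined on $[0,\infty)^2$ rather than $[0,\infty]\times[0,\infty)$ (Definition~\ref{asfgdsfghs}.(2)), but this does not affect the argument.
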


Let us summarize our methods of producing embeddings of  isomorphism classes of tame $[0,\infty)$-parametrized  $K$ vector spaces into the space ${\mathcal M}_2$ of measurable functions of the form $[0,\infty)^2\to [0,\infty)$.  A density $f\colon[0,\infty)\to(0,\infty)$, which is a measurable function with strictly positive values, leads to two regular contours: the distance type $D_f$ (see~\ref{sdfbhndfjhgh}) and the shift type $S_f$
(see~\ref{shift type}). According to Theorem~\ref{adfgsdfg} each of these contours then leads to a sequence of pseudometrics on $\text{Tame}([0,\infty),\text{Vec}_K)$ which is ample for the rank. In this way any density leads to embeddings illustrated in the following diagrams:
\[\xymatrix@R=38pt{
*\txt{Densities \\ $f\colon[0,\infty)\to(0,\infty)$}*
\ar@/_20pt/[d]_{\text{distance type}}
\ar@/^20pt/[d]^{\text{shift type}}\\
\text{Regular contours}\ar[d]_-{\{d_{-/\alpha}\}_{\alpha\in [0,\infty]}}\\
*\txt{Sequences of pseudometrics on \\$\text{Tame}([0,\infty),\text{Vec}_K)$ ample for  \\
	$ \text{rank}\colon \text{Tame}([0,\infty),\text{Vec}_K)\to {\mathbf N}$ 
}*
}\ \ \ 
\begin{xy}
(0,0)*+{\text{Tame}([0,\infty),\text{Vec}_K)}="A",
(0,-40)*+{ {\mathcal M}_2}="B",
"A";"B" **\crv{(20,-20) }?>*\dir{>} \POS?(.4)*+!L^{\overline{\text{rank}}_{S_f}},
"A";"B" **\crv{(-20,-20) }?>*\dir{>}\POS?(.4)*+!R^{\overline{\text{rank}}_{D_f}},
\end{xy}\]
If  the density is  $1$, then the distance and the shift type contours coincide and so do the induced embeddings. 
For other densities the contours and the  embeddings are different. For example consider the constant densities $1$ and $5$, and the density $f$ displayed in Figure~\ref{fig_point_processes_contour}. In Figure~\ref{embedings} we illustrate the following functions:

\[\begin{array}{l|l}
D1:=\widehat{\text{\rm rank}}_{D_1}V =\widehat{\text{\rm rank}}_{S_1}V  &Df/0.13:=\overline{\text{rank}}_{D_f}V(0.13,-)=\widehat{\text{\rm rank}}_{D_f/0.13}V\\ 
D5:=\widehat{\text{\rm rank}}_{D_5}V &Df/0.3:=\overline{\text{rank}}_{D_f}V(0.3,-)=\widehat{\text{\rm rank}}_{D_f/0.3}V\\
S5:=\widehat{\text{\rm rank}}_{S_5}V&Sf/0.13:=\overline{\text{rank}}_{S_f}V(0.13,-)=\widehat{\text{\rm rank}}_{S_f/0.13}V\\
& Sf/0.3:=\overline{\text{rank}}_{S_f}V(0.3,-)=\widehat{\text{\rm rank}}_{S_f/0.3}V
\end{array}\]
where $V$ is a  tame $[0,\infty)$-parametrized vector space given by the first homology with ${\mathbf F}_2$ coefficients of the Vietoris-Rips construction on an IFS  point process on a unit square as described in Section~\ref{point_processes}.
\begin{figure}[!h]
\begin{minipage}{0.99\textwidth}
	\centering
	\includegraphics[width=0.48\textwidth]{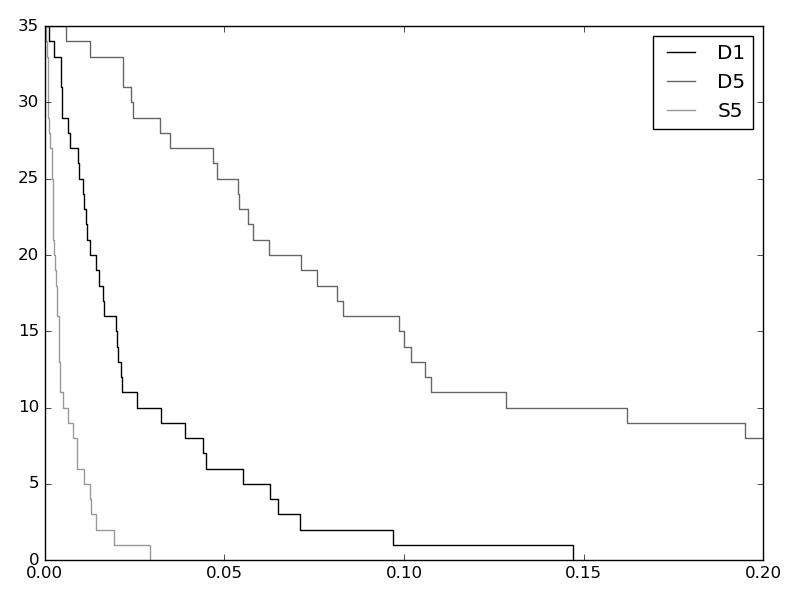}
	\includegraphics[width=0.48\textwidth]{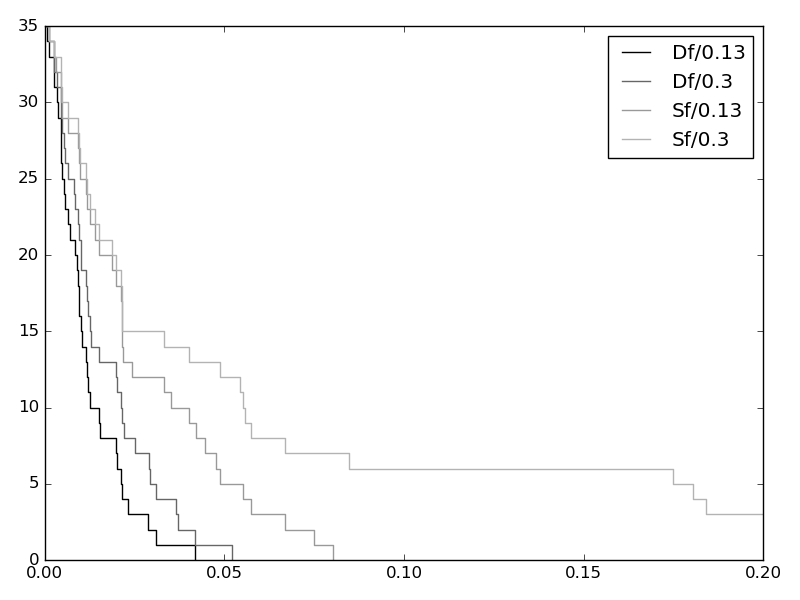}	
\end{minipage}
\caption{For  densities different than  $1$, the embeddings described after Corollary~\ref{asfasdfdghfgsn} are different.}
\label{embedings}
\end{figure}

\section{Using contours}\label{contour_usage}
In this section we illustrate how  choosing a  density and a contour can lead to improvements  in classification results based on  stable ranks. We emphasize that the focus is not on finding an optimal classifier for a specific case. The aim is to give concrete evidence to support our claim that the choice of metrics is fundamental in homological  persistence and to explain how contours are used in concrete analysis. We also hope to convey that the presented theory leads to a practical TDA pipeline amenable particularly to machine learning. Further study of the efficacy of this pipeline and the choice of contours for particular tasks is the aim of ongoing research. Two case studies are considered, point processes on a unit square and real data from human activities. To generate bars needed for our calculations we used the Ripser software \cite{Ripser} with Vietoris-Rips filtration and homology with ${\mathbf F}_2$ coefficients.

\subsection{Visualizing bars and contours}\label{contour_visualization}

The bars \(K(s,e)\) of a $[0,\infty)$-parametrized $K$ vector space can be parametrized by the start \(s\) and the life span with respect to the standard contour \(C\), \(\text{\rm life}_C K(s,e)= e-s\). Bars can then be visualized in an \((s, e-s)\)-plot as vertical stems. We call this presentation {\bf stem plot}. For a reader accustomed to barcodes, the stem plot contains exactly the same information but plotted vertically. The horizontal axis is the filtration value and the vertical axis is the bar length. Taking into account multiplicity of more than one bar having the same start value we extend the domain of the stem plot to $[0,\infty) \times \N$, where $\N$ is used to index bars with the same birth. However with real data this is needed basically only for the $0$th homology since in the standard Vietoris-Rips filtration all the points and hence all the $0$th homology classes are present at filtration value 0. 

For a fixed $t$, the relation $C(s_i,t)<e_i$ in Theorem~\ref{agfhjnd}.(3) describes an area above the parametric curve $\gamma_{t}(s_i)=(s_i,C(s_i,t))$ in the $(s, e)$-plane. Setting $C(s_i,t) = e_i$ and applying the transformation $(s,e) \mapsto (s,e-s)$, we get a curve $\widehat{\gamma}_{t}(s_i)=(s_i,C(s_i,t)-s_i)$. Such curves are typically called contour lines, hence the name contours.

Significance of stem plots comes when overlaid with contour lines. The right plot of Figure \ref{fig_contours_use} illustrates a persistence stem plot along with contour lines of distance and shift contours for few values of \(t\) (dashed curves). Stem plot comes from one realization of point processes of Section \ref{point_processes}. Density function used to calculate contours is also shown in the left plot. With contour lines the vertical axis of a stem plot corresponds to \(t\) in $C(s,t)$ and the horizontal axis is the filtration value \(s\) as explained above.

Stem plot and contour lines make it easy to understand visually Proposition~\ref{asfdsdfgfd}: the value of the stable rank at $t$ is the number of those bars that exceed the contour line at \(t\). Thus on those regions where contour lines obtain low values, homological features are magnified and vice versa for larger values of contour lines. Stem plot can be an effective tool to gain understanding of stable ranks with respect to different contours and to explore appropriate ones for a given task. 

\begin{figure}
	\begin{minipage}{0.99\textwidth}
		\includegraphics[width=0.49\textwidth]{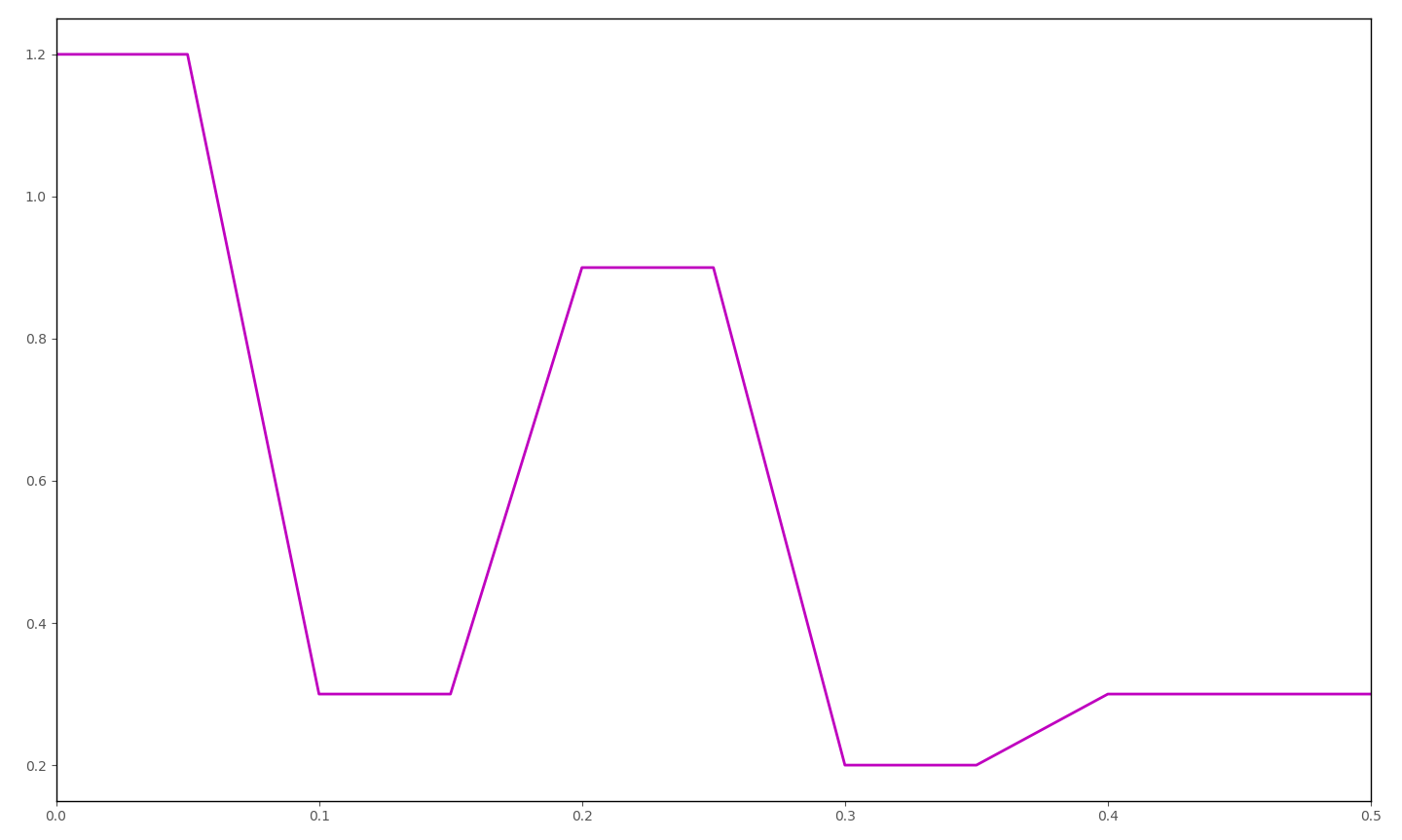}
		\includegraphics[width=0.49\textwidth]{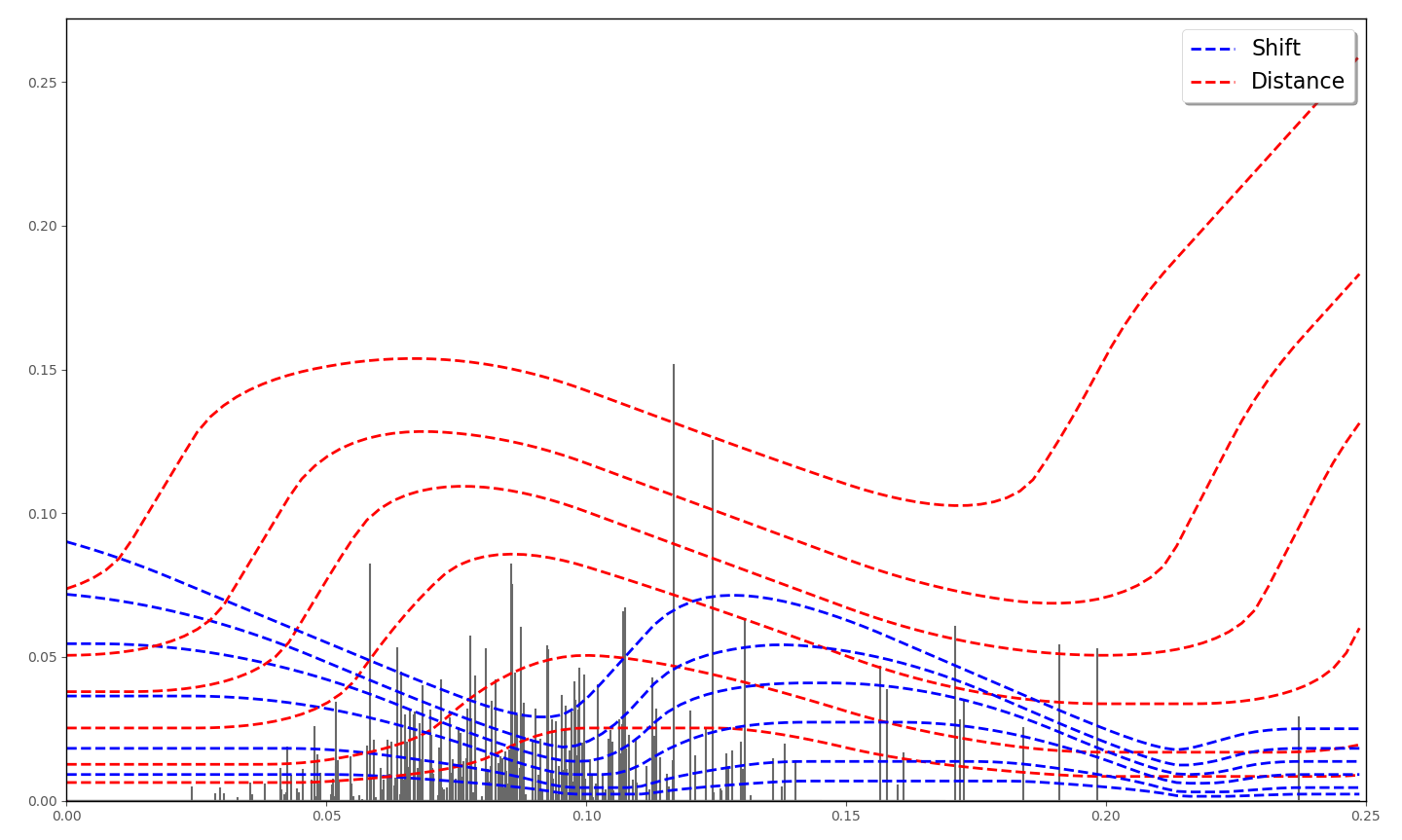}	
	\end{minipage}
	\caption{Distance and shift type contours visualized on a stem plot (right) with the same density function (left).}
	\label{fig_contours_use}
\end{figure}

\subsection{Point processes}\label{point_processes}
Point processes have gathered interest in TDA community, see for example \cite{APF, LimitTheorems, HypothesisTesting}. We simulated six different classes of point processes on a unit square, see their descriptions below. For each class we produced 500 simulations on average containing 200 points. Let $X \sim PD(k)$ denote that random variable $X$ follows probability disribution $PD$ with parameter $k$. In particular, $\text{Poisson}(\lambda)$ denotes the Poisson distribution with event rate $\lambda$.
\smallskip

\noindent
\textbf{Poisson}: We first sampled number $N$ of events, where $N \sim \text{Poisson}(\lambda)$. We then sampled $N$ points from a uniform distribution defined on the unit square $[0,1] \times [0,1]$. Here $\lambda=200$.
\smallskip

\noindent
\textbf{Normal}: Again number $N$ of events was sampled from $\text{Poisson}(\lambda)$, $\lambda = 200$. We then created $N$ coordinate pairs $(x,y)$, where both $x$ and $y$ are sampled from normal distribution $N(\mu,\sigma^2)$ with mean $\mu$ and standard deviation $\sigma$. Here $\mu=0.5$ and $\sigma = 0.2$.
\smallskip

\noindent
\textbf{Matern}: Poisson process as above was simulated with event rate $\kappa$. Obtained points represent parent points, or cluster centers, on the unit square. For each parent, number $N$ of child points  was sampled from $\text{Poisson}(\mu)$. A disk of radius $r$ centered on each parent point was defined. Then, for each parent, the corresponding number  $N$ of child points were placed on the disk. Child points were distributed by a uniform distributions on the disks. Note that parent points are not part of the actual data set. We set $\kappa$=40, $\mu$=5, and $r=0.1$.
\smallskip

\noindent
\textbf{Thomas}: Thomas process is similar to Matern process except that instead of uniform distributions, child points are sampled from bivariate normal distributions defined on the disks. The distributions were centered on the parents and had diagonal covariance $\bigl[\begin{smallmatrix} 0.1^2 & 0 \\ 0 & 0.1^2 \end{smallmatrix} \bigr]$. \smallskip

\noindent
\textbf{Baddeley-Silverman}: For this process the unit square was divided into equal size tiles with side lengths $1/14$. Then for each tile, $N$ points were sampled, $N \sim \text{Baddeley-Silverman}$. Baddeley-Silverman distribution is a discrete distribution defined on values $(0,1,10)$ with  probabilities  $(\frac{1}{10},\frac{8}{9},\frac{1}{90})$. For each tile, associated number of  $N$ points were then uniformly distributed on the tile.
\smallskip

\noindent
\textbf{Iterated function system (IFS)}: We also generated point sets with an iterated function system. For this a discrete distribution is defined on values $(0,1,2,3,4)$ with corresponding probabilities 
$\left(\frac{1}{3},\frac{1}{6},\frac{1}{6},\frac{1}{6},\frac{1}{6}\right)$. We denote this distribution by IFS. Starting from an initial point $(x_0,y_0)$ on the unit square, $N \sim \text{Poisson}(200)$ new points were generated by the recursive formula $(x_n,y_n) = f_i(x_{n-1},y_{n-1}),$ where $n \in \{1,...,N\}$, $i \sim \text{IFS}$ and the functions $f_i$ are given as
\[f_0(y,x) = \left(\frac{x}{2},\frac{y}{2}\right), f_1(y,x)= \left(\frac{x}{2}+\frac{1}{2},\frac{y}{2}\right), 
f_2(y,x) = \left(\frac{x}{2},\frac{y}{2}+\frac{1}{2}\right),\] 
\[f_3(y,x) = \left(\left|\frac{x}{2}-1\right|,\frac{y}{2}\right), 
f_4(y,x)= \left(\frac{x}{2},\left|\frac{y}{2}-1\right|\right). 
\]
\smallskip

Figure \ref{fig_point_processes} shows realizations of the point processes with given parameters. From topological data analysis point of view the point sets hold no distinct large scale topology. It is therefore ideal to study the geometric correlations or features in the filtration captured by homologies in degrees 0 and 1, denoted $H_0$ and $H_1$ respectively.

\begin{figure}
	\begin{minipage}{0.99\textwidth}
		\centering
		\includegraphics[width=0.32\textwidth]{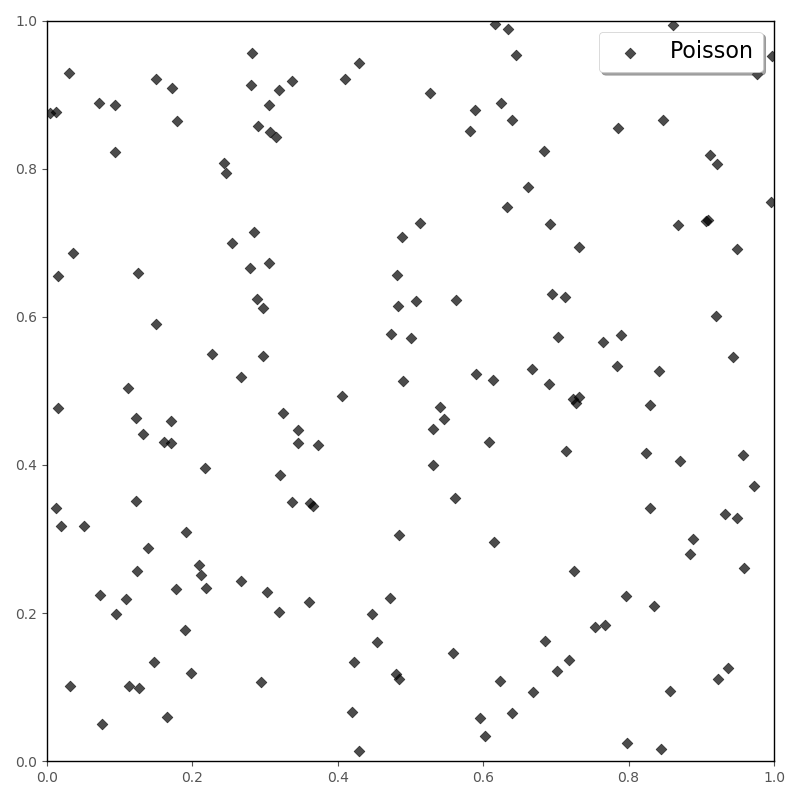}
		\includegraphics[width=0.32\textwidth]{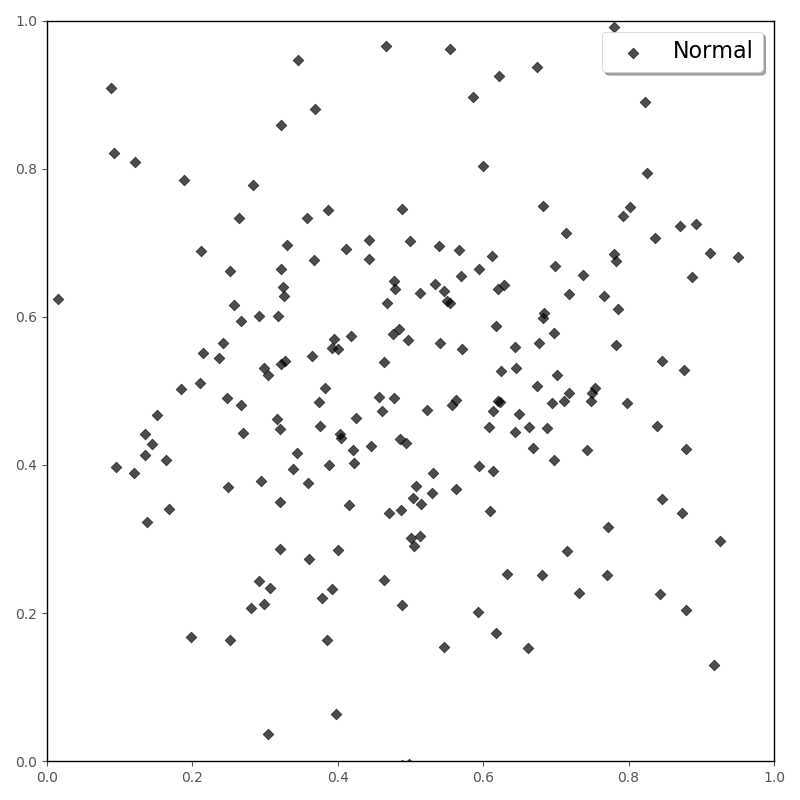}
		\includegraphics[width=0.32\textwidth]{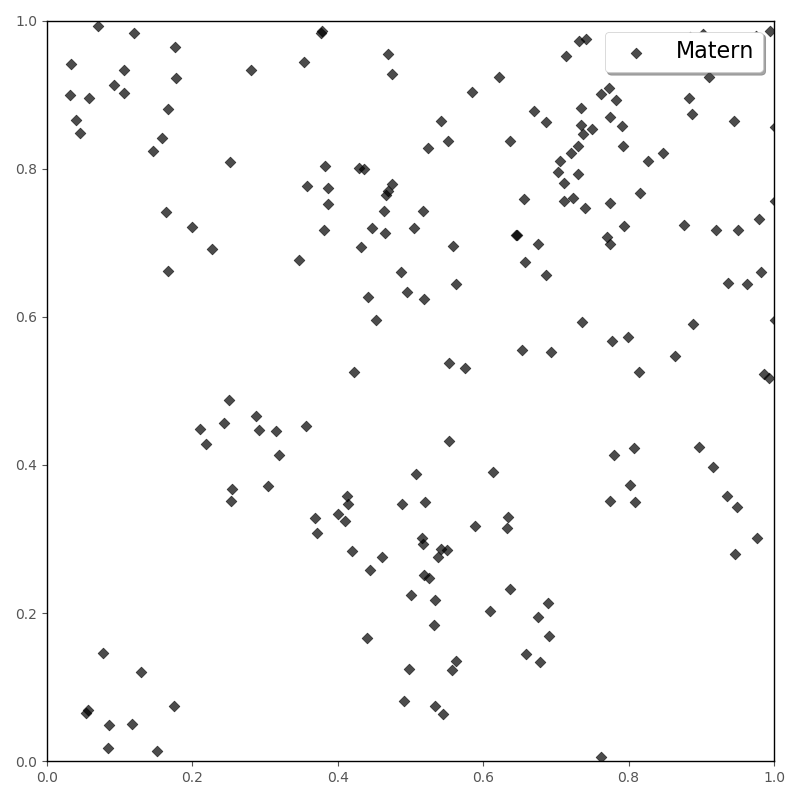}	
	\end{minipage}
	\begin{minipage}{0.99\textwidth}
		\centering
		\includegraphics[width=0.32\textwidth]{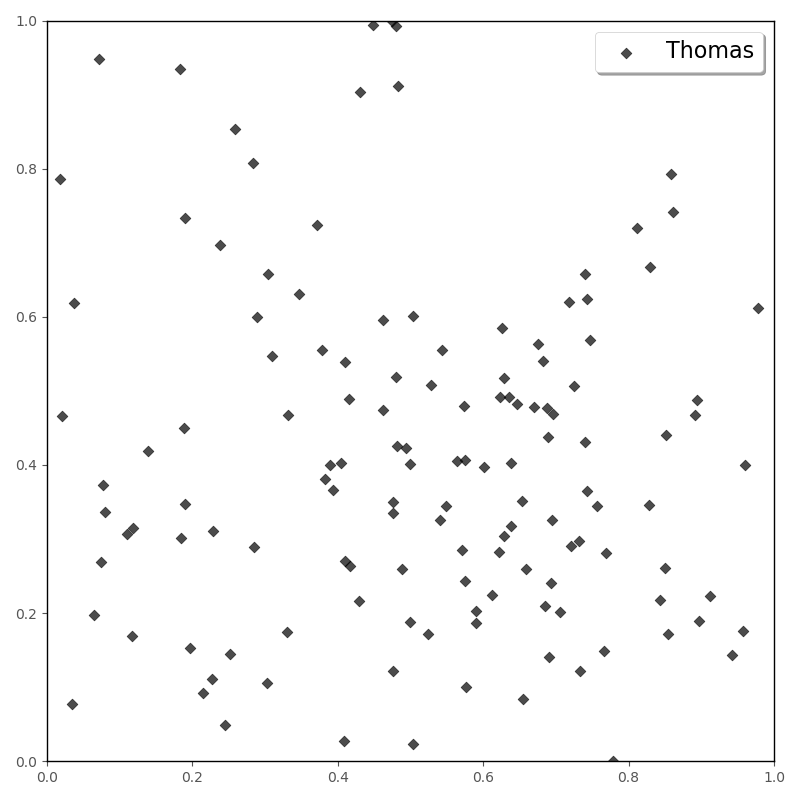}
		\includegraphics[width=0.32\textwidth]{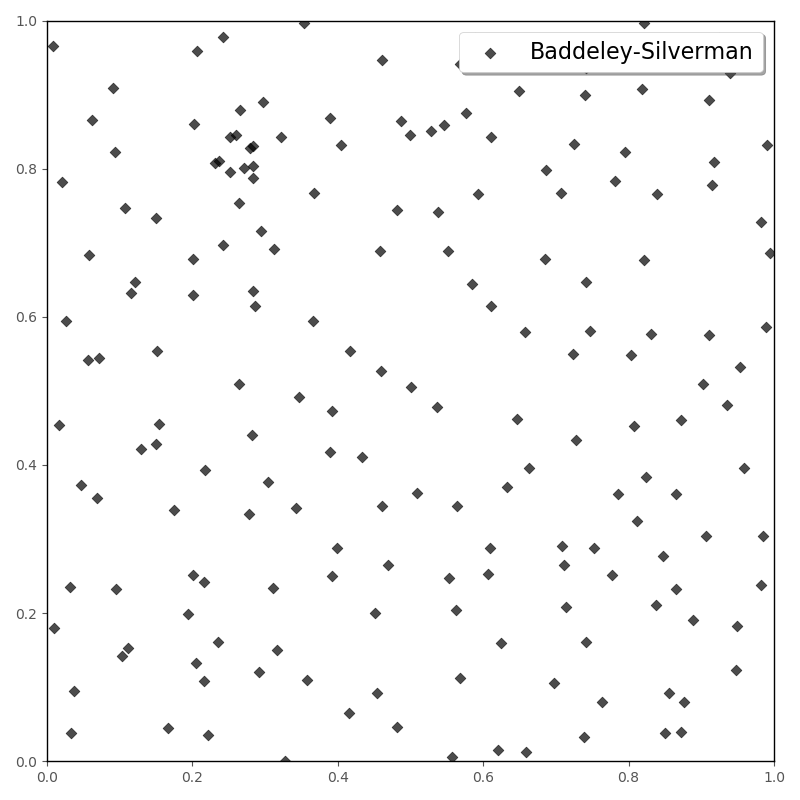}
		\includegraphics[width=0.32\textwidth]{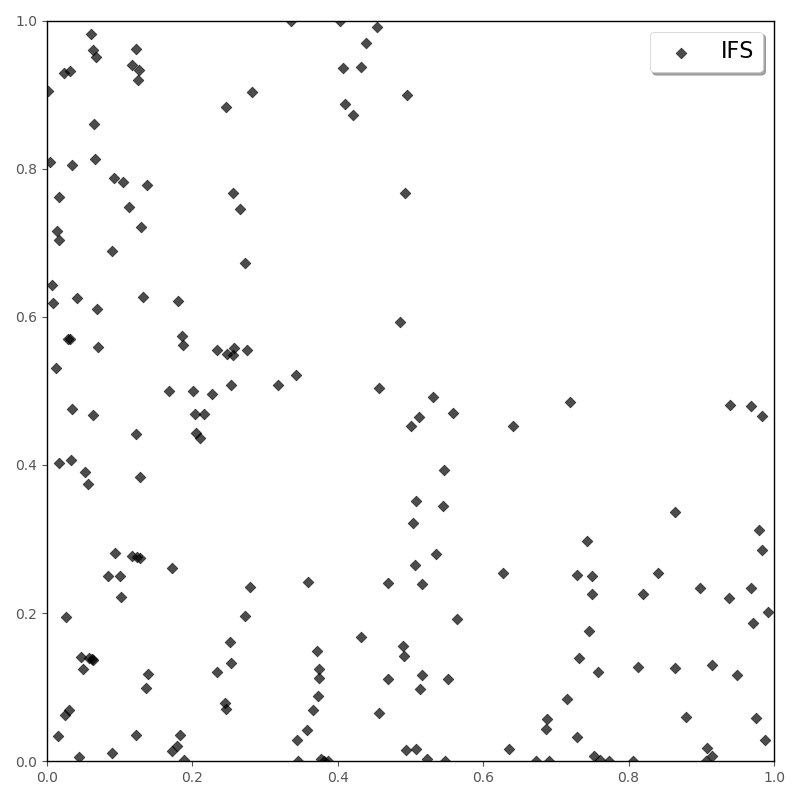}	
	\end{minipage}
	\caption{Example realizations of point processes on unit square.}
	\label{fig_point_processes}
\end{figure}

Figure \ref{fig_PointProc_stable_ranks_dist_shift} shows $H_1$ stable ranks from distance and shift contours for one realization of the point processes. Corresponding stem plot and contour lines are shown in Figure \ref{fig_contours_use}. Note the different character of stable ranks between contours. Distance contour decreases lifespans of bars relative to it making the stable ranks decrease to zero faster and also diminishing their separation (Figure \ref{fig_PointProc_stable_ranks_dist_shift} left). Comparing, for example, Poisson and Baddeley-Silverman point processes in Figure \ref{fig_point_processes}, Poisson seems to have larger \(H_1\) features appearing at larger filtration values. The point structure of Baddeley-Silverman seems to indicate that it has smaller \(H_1\) features at smaller filtration values. Shift contour increases lifespans of those smaller Baddeley-Silverman bars around filtration value \(s=0.08\) (right plot in Figure \ref{fig_contours_use}) whereas more of the later Poisson bars are shortened by the contour after \(s=0.08\). This can be seen in Baddeley-Silverman dominating Poisson stable rank (Figure \ref{fig_PointProc_stable_ranks_dist_shift} right). Note that the horizontal axes in Figure \ref{fig_PointProc_stable_ranks_dist_shift} correspond to the \(t\) variable of contour $C(s,t)$ while the horizontal axes of stem plots are the filtration values \(s\).

Explanation in the previous paragraph exemplifies how the choice of metric allows analyst to emphasize differently homological features in persistence analysis. As referenced in Section \ref{adfgafdghsfgh}, various recent applications have shown that bars of different sizes and also their locations in the filtration might be deemed important for a given analysis task. The framework of hierarchical stabilization facilitates this kind of exploration. For instance, consider set of samplings of some dynamical phenomenon. Analysis with contours might help understanding whether observed smaller \(H_1\) features are just sampling noise or indicate actual puncturing of the underlying topology of the dynamics. Quantifying importance of holes is also interesting in relational databases where they indicate missing data values or non-allowed attribute combinations \cite{BigHoles}. 

\begin{figure}
	\begin{minipage}{0.99\textwidth}
		\includegraphics[width=0.49\textwidth]{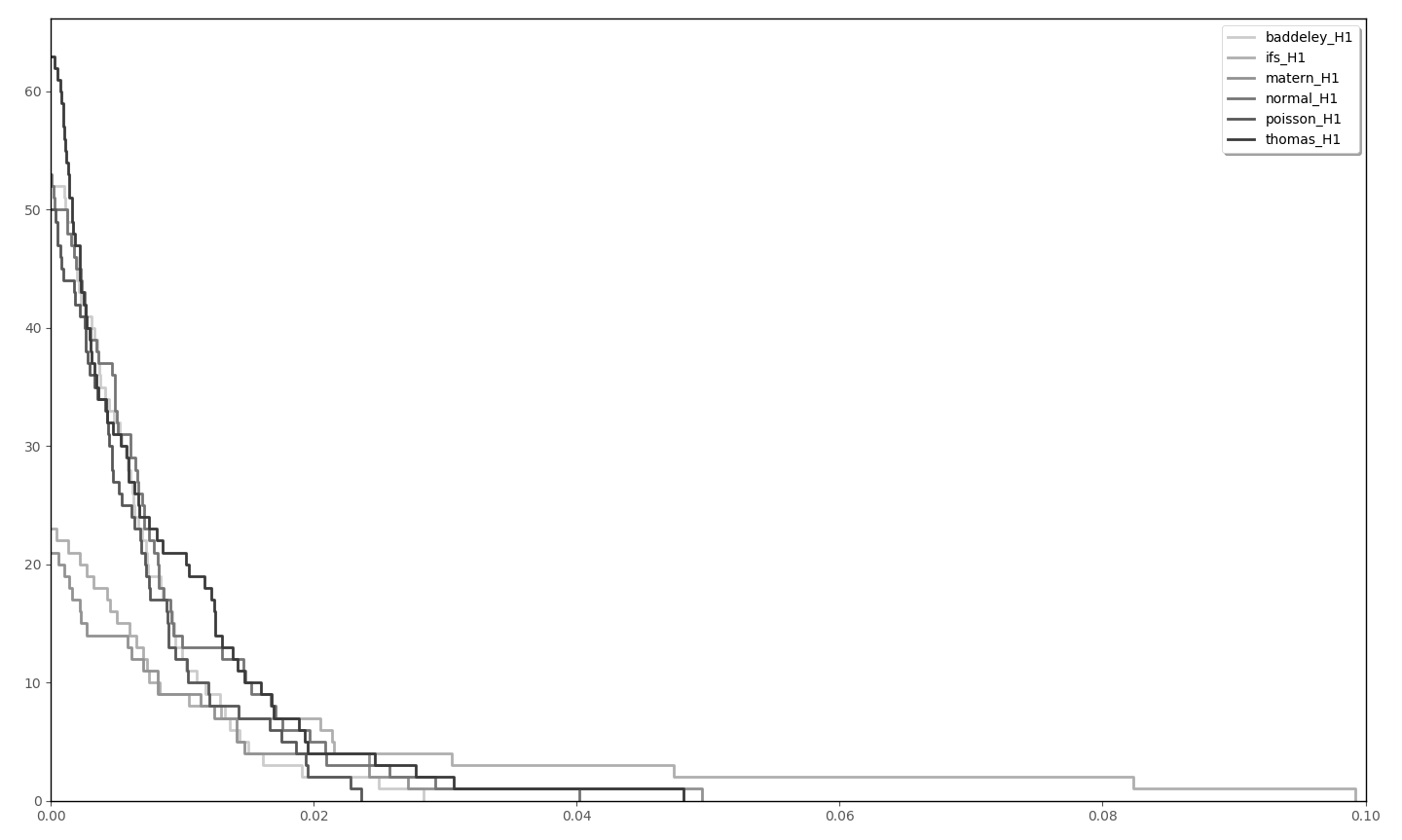}
		\includegraphics[width=0.49\textwidth]{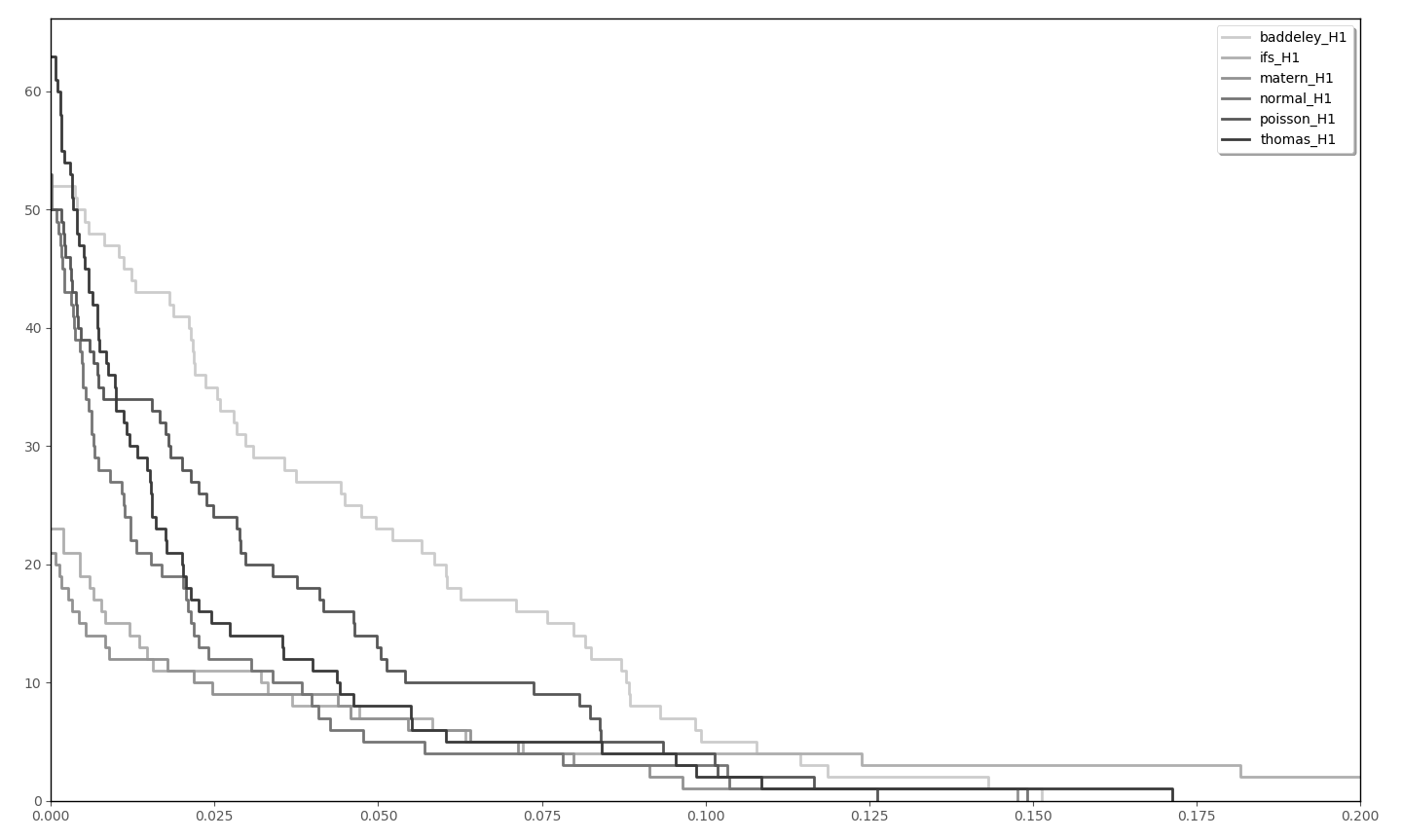}	
	\end{minipage}
	\caption{Stable ranks in \(H_1\) of point processes for distance (left) and shift (right) contours of Figure \ref{fig_contours_use}. The bars are also shown in the stem plot of Figure \ref{fig_contours_use}.}
	\label{fig_PointProc_stable_ranks_dist_shift}
\end{figure}

\begin{figure}
	\begin{minipage}{0.99\textwidth}
		\includegraphics[width=0.32\textwidth]{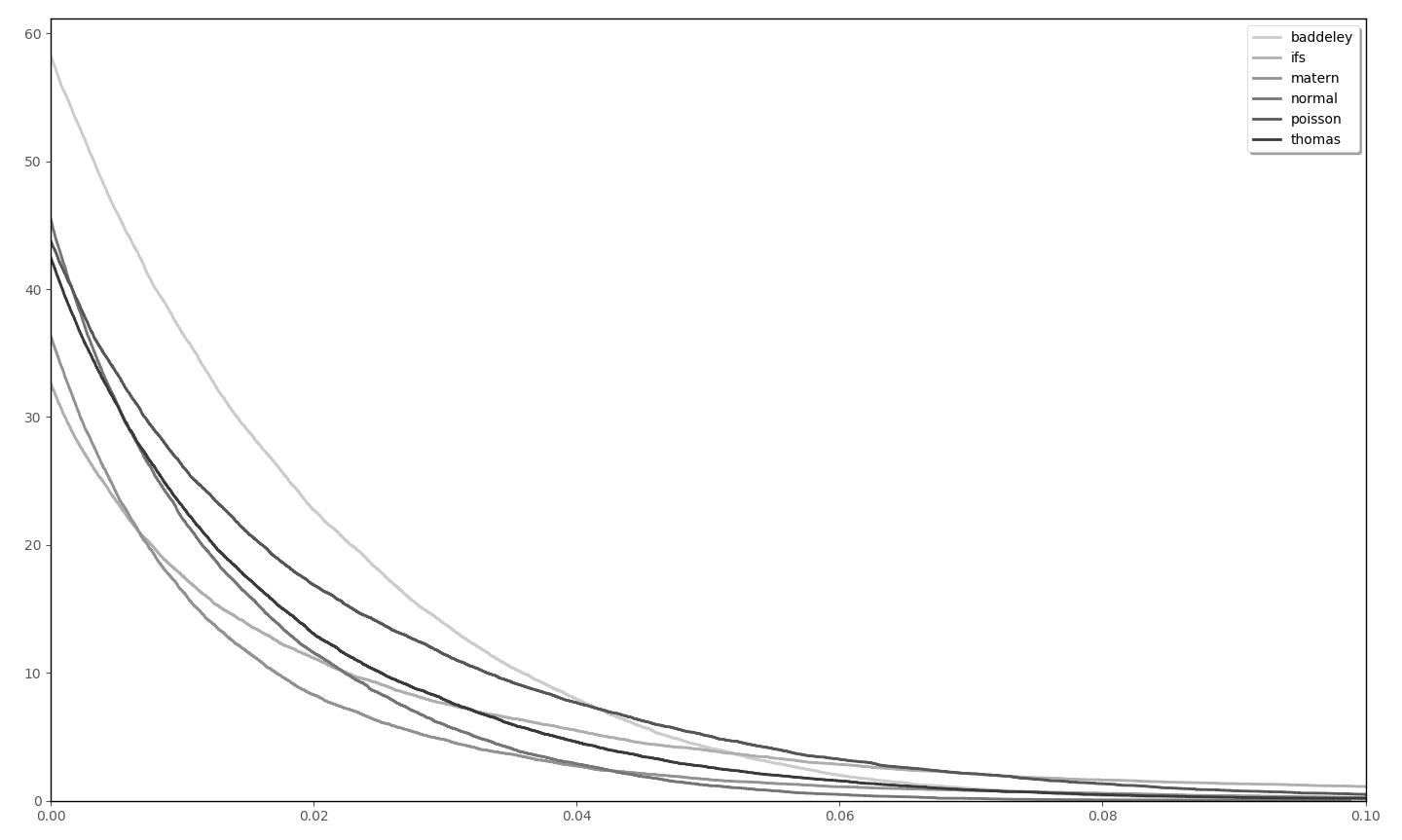}
		\includegraphics[width=0.32\textwidth]{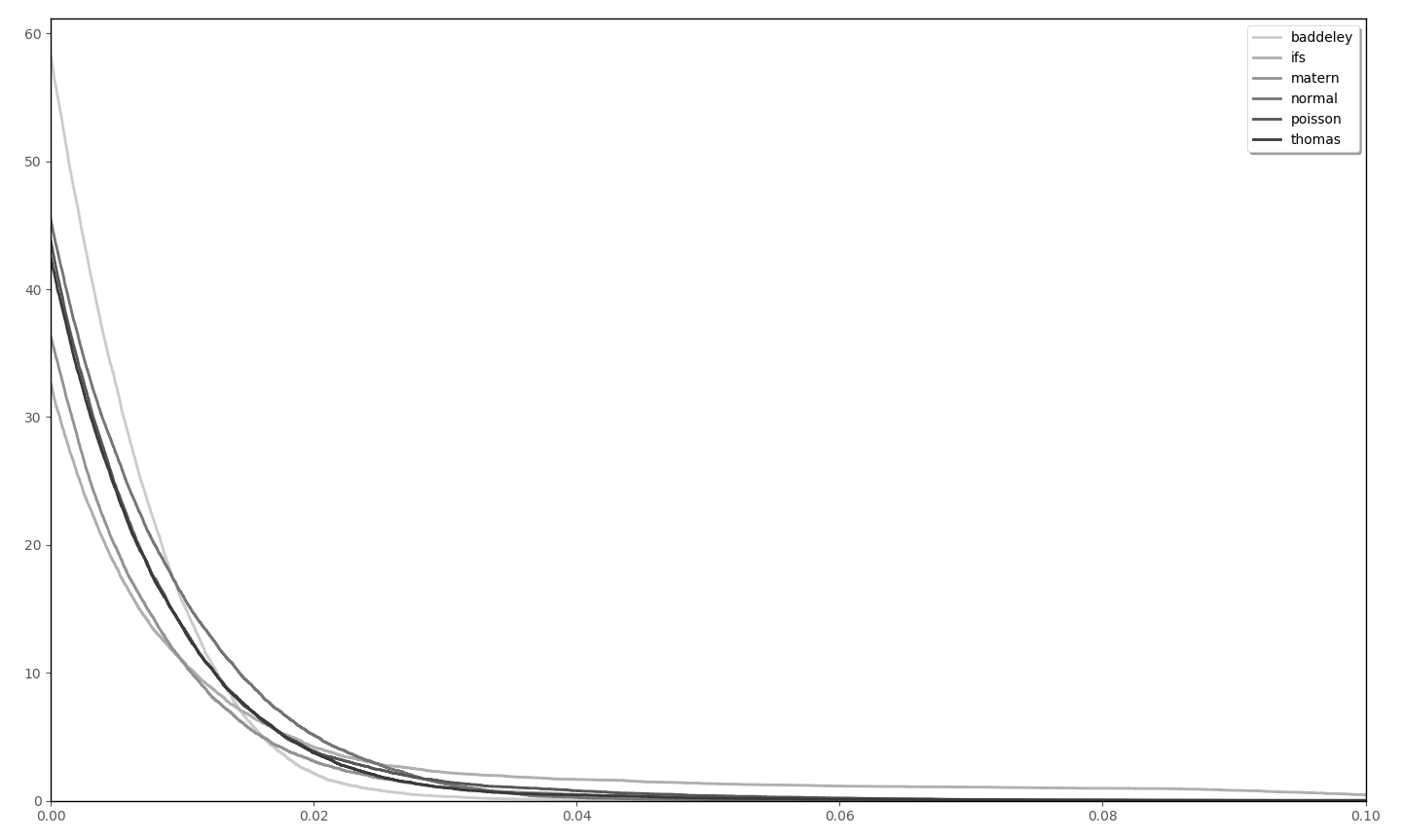}
		\includegraphics[width=0.32\textwidth]{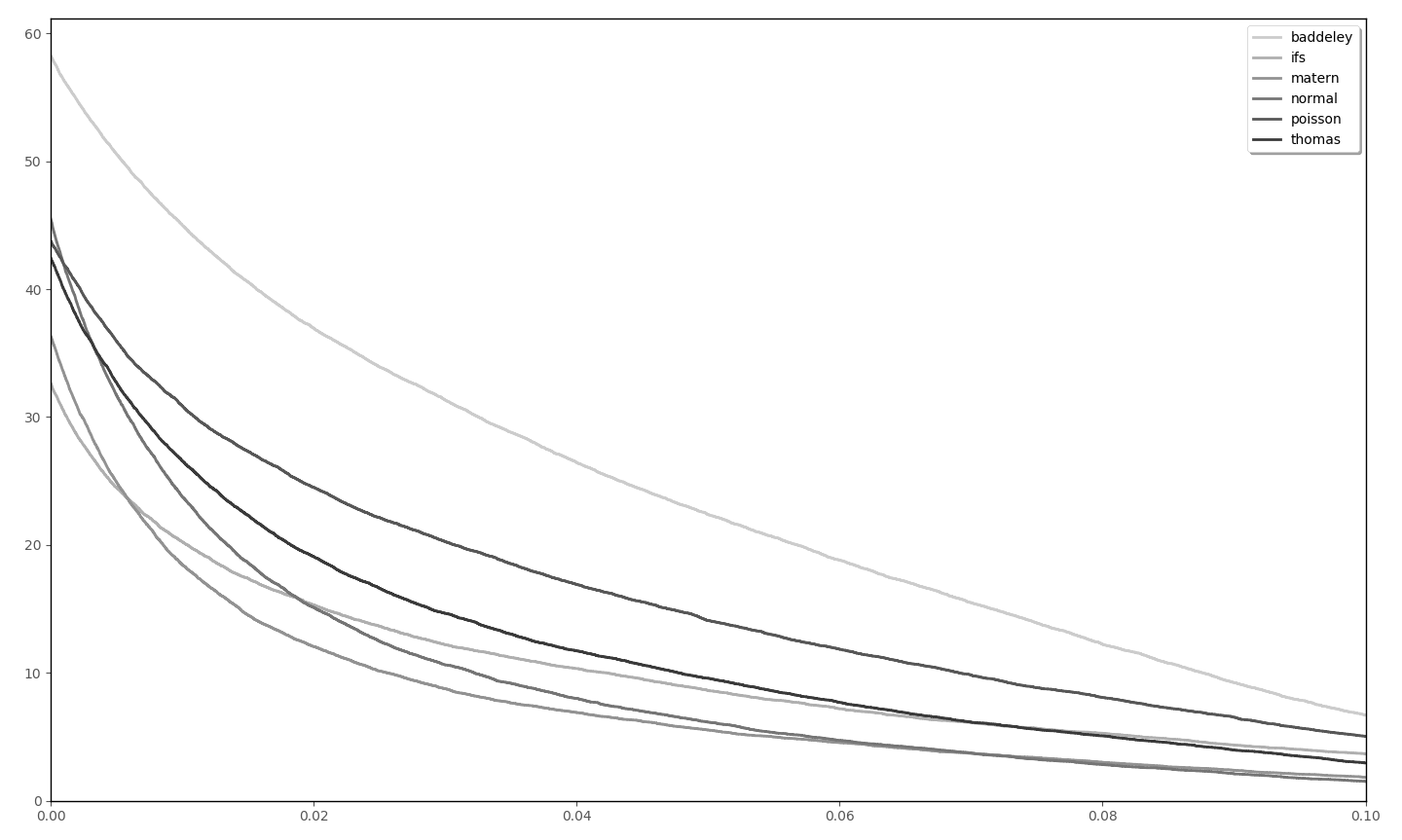}
	\end{minipage}
	\caption{Mean $H_1$ stable ranks of 200 simulations of point processes with respect to the standard contour (left), distance contour of Figure \ref{fig_contours_use} (middle) and shift contour  of Figure \ref{fig_contours_use} (right). All the plots are on the same scale.}
	\label{fig_point_processes_avg200_stan}
\end{figure}

Figure \ref{fig_point_processes_avg200_stan} is a plot of the averages (point-wise means) of $H_1$ stable ranks  with respect to the standard contour and distance and shift contours of Figure \ref{fig_contours_use} for 200 simulations of the point processes. Shift contour increases the separation between the stable ranks as compared to the standard contour, whereas distance contour decreases the separation. It is worth noting that Matern and Thomas processes are well distinguished by the shift contour in the right plot even though in their definition they only differ in the distribution used for point clusters. 

To test how well the stable ranks with respect to different contours perform in classifying different point processes we conducted mean classification procedure:

\begin{itemize}
	\item For each class choose 200 simulations as a training set. Remaining 300 simulations form test set  for the class.
	\item Compute the point-wise means of the training set stable ranks with respect to the chosen contour. These mean invariants are used as classifiers, denoted by \(\widehat{C}_{H_\bullet}\), where \(H_\bullet\) refers to the corresponding homology.
	\item Denote stable ranks in the test set by \(T_{H_\bullet}\). Compute distances \(L_1(\widehat{C}_{H_\bullet},T_{H_\bullet})\) between each test element and all classifiers.
	\item Record found minimum distance by adding 1 to the corresponding pair of the classifier and the test  class. Classification is successful if the classifier and the test  belong to the same class (in the optimal case the value of the pair (Poisson \(\widehat{C}\), Poisson \(T\)) would be 300, for example). 
	\item For cross-validation use 20-fold random subsampling. Randomly sample 200 stable ranks for classifiers, remaining 300 stable ranks in each class constitute the test sets. Repeat the classification procedure above 20 times and take the classification accuracy to be the average over the folds. 
\end{itemize} 
\smallskip

Cross-validated classification accuracies with standard contour are reported in the confusion matrices of Figure \ref{fig_point_proc_classification_acc_integral_stan}. The confusion matrices show relative accuracies after dividing by 300 after each fold and averaging after the full cross-validation run. The mean classification accuracy by taking the average over classes (average of the diagonal) is 85\% for $H_0$ and 73\% for $H_1$. The classification procedure performs comparably or better as the hypothesis testing against the homogeneous Poisson process in \cite{APF}. Note that no other assumptions or parameter selections were involved in our methodology other than the split between training and test samples (200 and 300, respectively.)

\begin{figure}
	\begin{minipage}{0.99\textwidth}
		\includegraphics[width=0.49\textwidth,clip,trim=7cm 2cm 7.3cm 1.2cm]{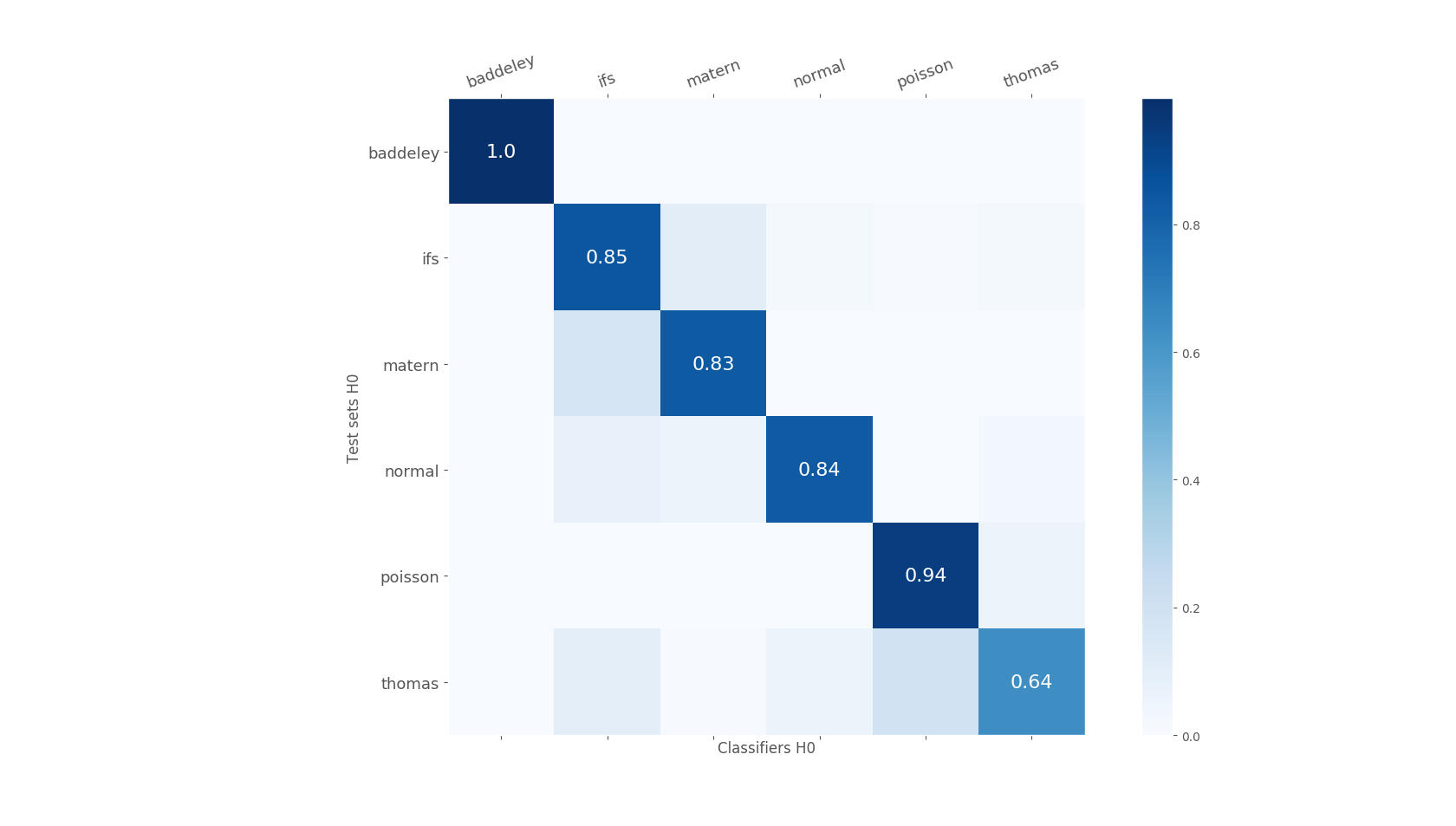}
		\includegraphics[width=0.49\textwidth,clip,trim=7cm 2cm 7.3cm 1.2cm]{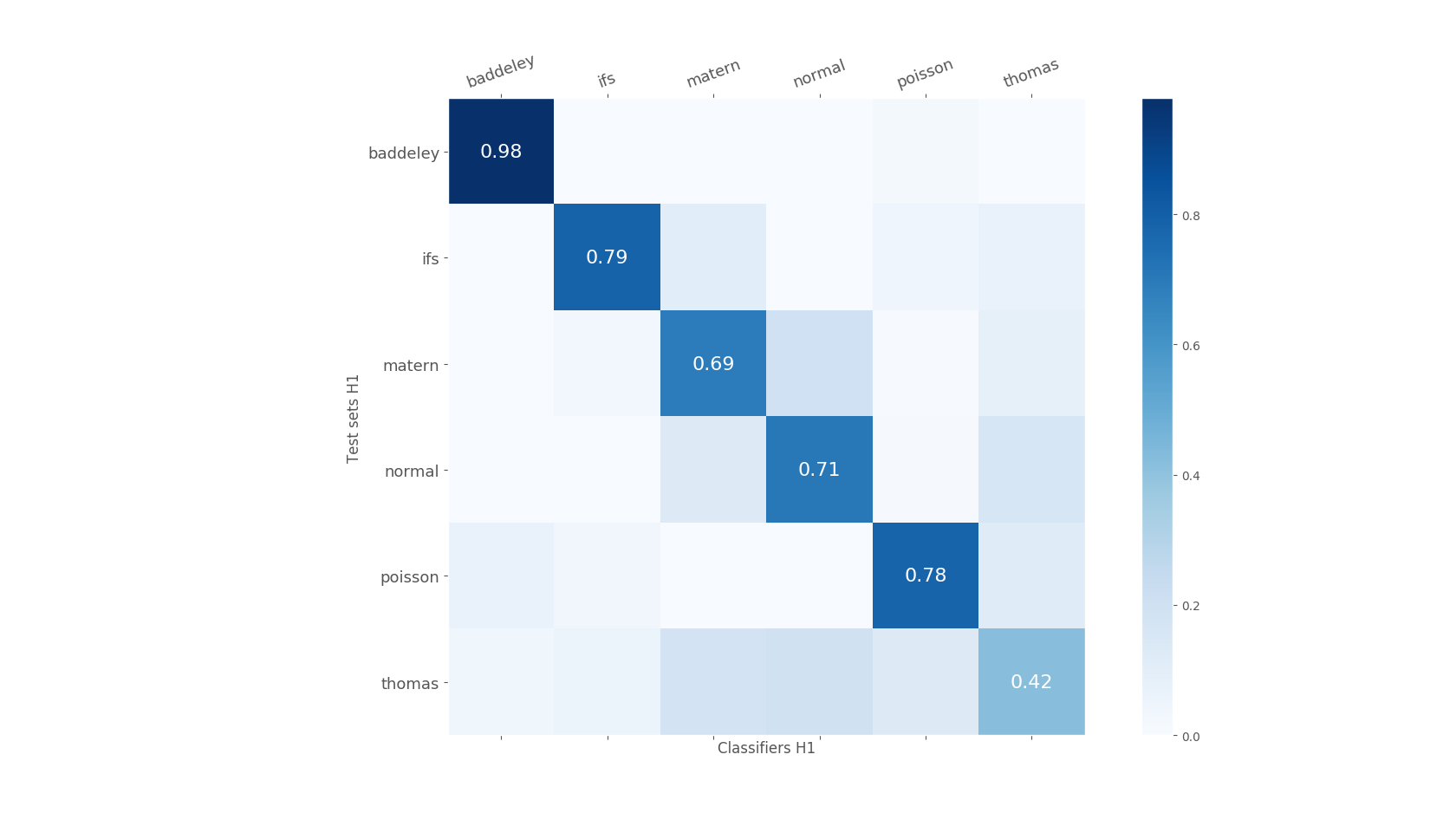}
	\end{minipage}
	\caption{Confusion matrices for the point process classification in $H_0$ (left) and $H_1$ (right) with standard contour.}
	\label{fig_point_proc_classification_acc_integral_stan}
\end{figure}

Figure \ref{fig_crossval_classification_accuracies_H1_integral_contoured_200_78} shows cross-validated classification accuracies for $H_1$ stable ranks with shift contour described in Figure \ref{fig_point_processes_contour}. We thus increase the lifespans of features appearing in the middle of the filtration. The overall classification accuracy increased to 78\%. Particularly classification accuracy of the Thomas process was drastically improved as shown in the confusion matrix of Figure \ref{fig_crossval_classification_accuracies_H1_integral_contoured_200_78}. Also noteworthy is the improvement in the accuracy of normal and Poisson processes. Using the shift contour thus captures more relevant distinguishing homological information of the point processes compared to the standard contour. 

\begin{figure}[!h]
	\begin{minipage}{0.99\textwidth}
		\centering
		\includegraphics[width=0.49\textwidth]{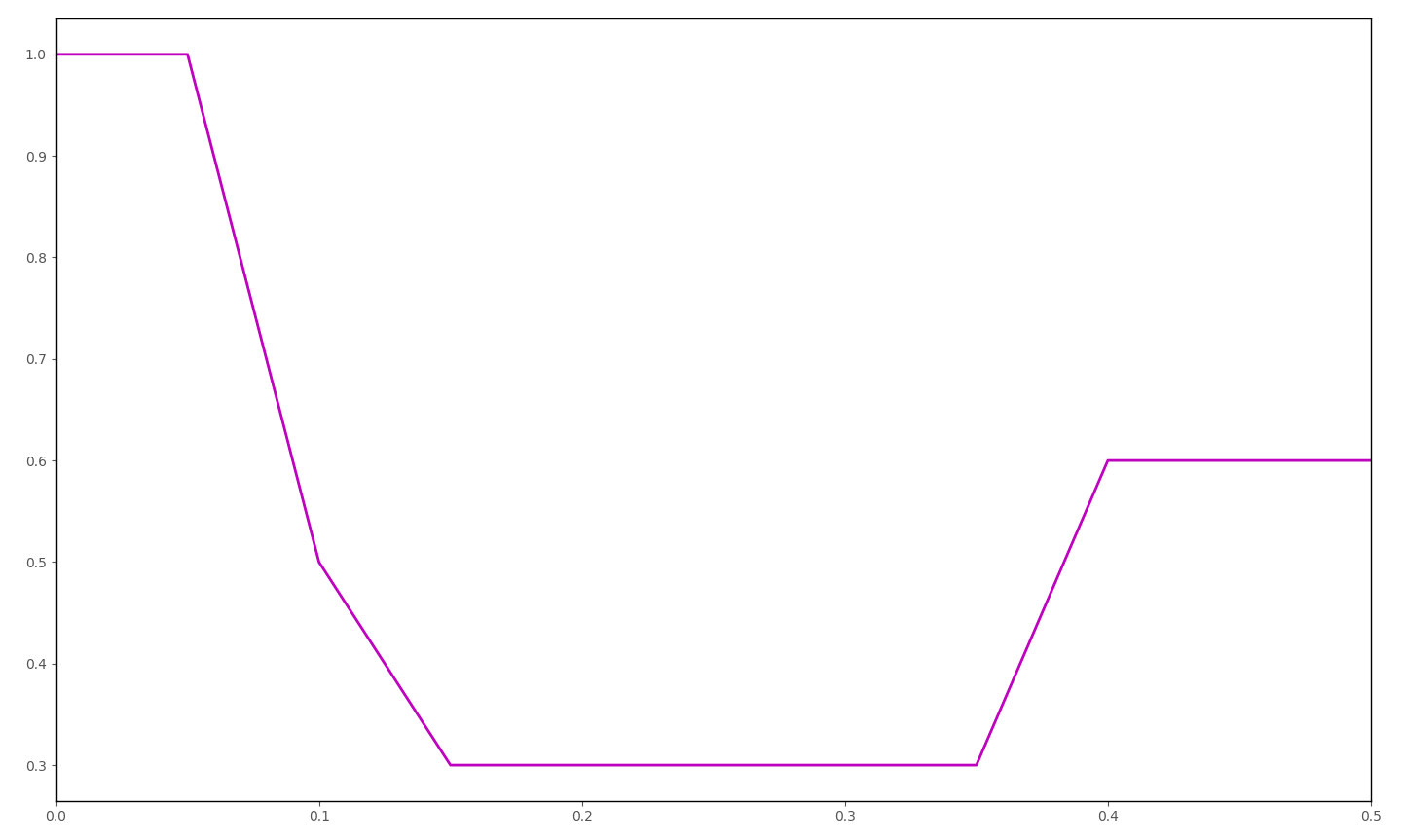}
		\includegraphics[width=0.49\textwidth]{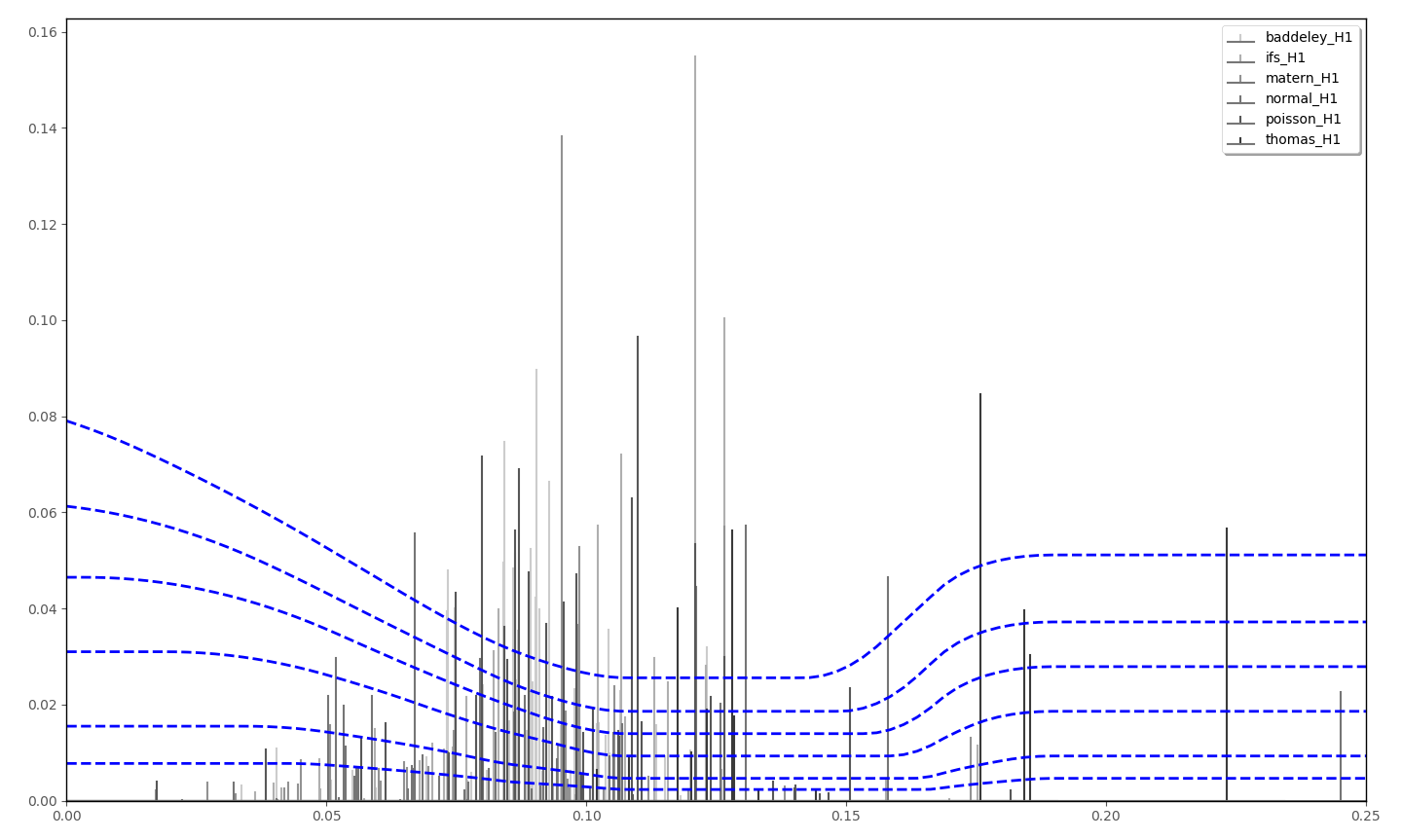}
	\end{minipage}
	\caption{Density function used in producing shift contour for point process classification in $H_1$ (left). Corresponding contour lines and stem plots from $H_1$ persistence analysis of one realization of the studied point processes (right).}
	\label{fig_point_processes_contour}
\end{figure}

\begin{figure}
	\centering
	\includegraphics[scale=0.27,clip,trim=7cm 2cm 2cm 1.2cm]{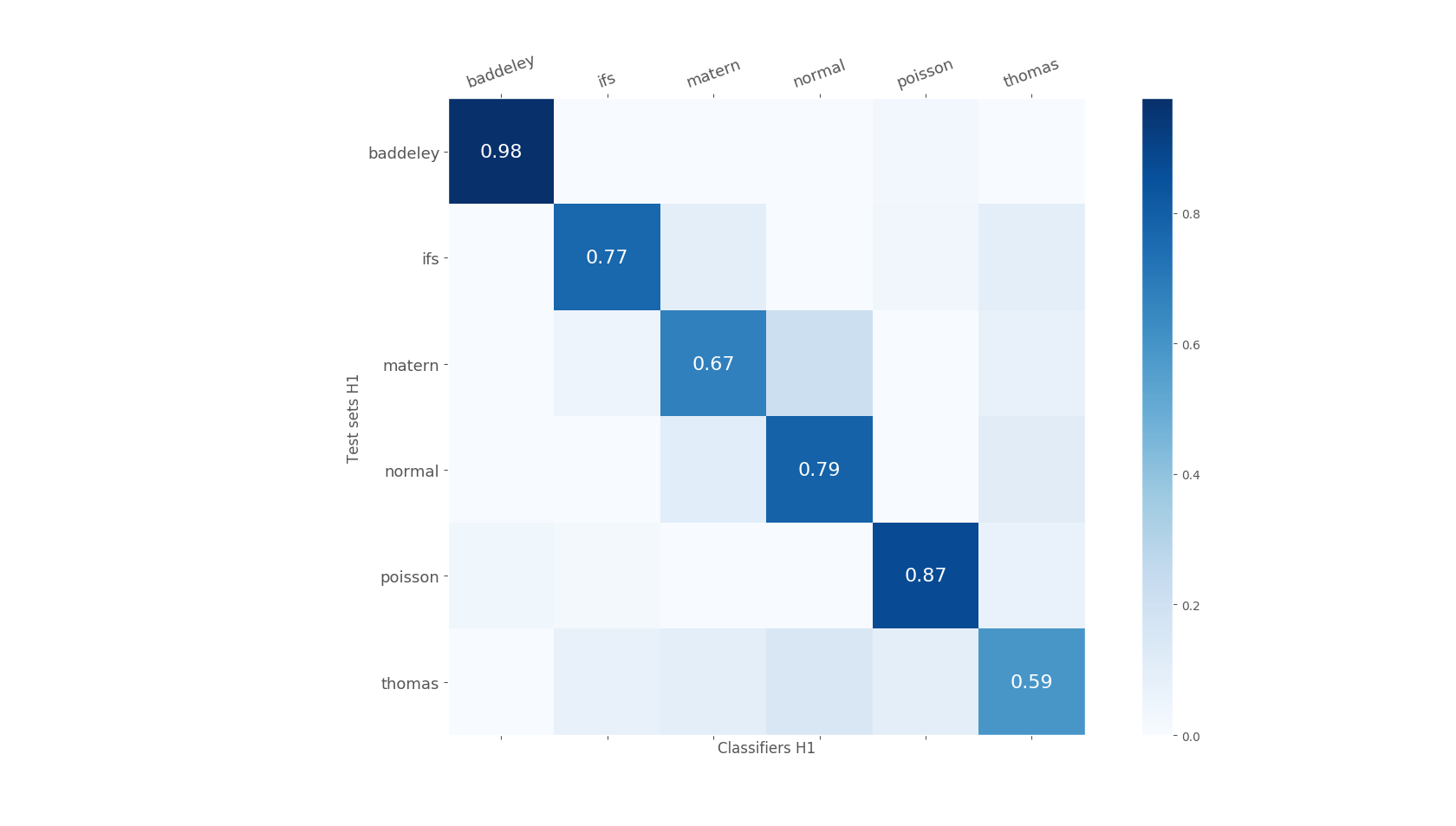}
	\caption{Confusion matrix for the classification of point processes in $H_1$ with contour coming from density function in Figure \ref{fig_point_processes_contour}.}
	\label{fig_crossval_classification_accuracies_H1_integral_contoured_200_78}
\end{figure}

\subsection{Activity monitoring}\label{FASGDFSG}
As an application to real data we studied activity monitoring of different physical activities. Used data set was PAMAP2 data obtainable from~\cite{PAMAP}. It makes sense to use all the persistence information, i.e.\  to combine homologies of different degrees into single classification scheme. In this section we demonstrate how this is enabled by stable ranks and our pipeline. 

The data consisted of seven persons from the PAMAP2 data set performing different activities such as walking, cycling, vacuuming and sitting. Test subjects were fitted with three Inertial Measurements Units (IMUs), one on wrist, ankle and chest, and a heart rate monitor. Measurements were registered every 0.1 seconds. Each IMU measured 3D acceleration, 3D gyroscopic and 3D magnetometer data. One data set thus consisted of 28-dimensional data points indexed by 0.1 second timesteps. 

We looked at two activities in this case study: ascending and descending stairs. At the outset one would expect these activities to be very similar and therefore difficult to distinguish. For persistence analysis we randomly sampled without replacement 100 points from each data set, repeated 100 times. For each of the 7 subjects we thus obtained 100 resamplings from their activity data. We computed $H_0$ and $H_1$ persistence for each sampling. The classification procedure was the same as outlined in Section \ref{point_processes} except we combined both homologies in the classifier as follows. We took the mean of 40 out of 100 stable ranks both in $H_0$ and $H_1$. We thus obtained 14 classifier pairs $(\widehat{C}_{H_0},\widehat{C}_{H_1})$ corresponding to all (subject, activity) classes. Remaining 60 stable ranks formed test data pairs $(T_{H_0}, T_{H_1})$ in each class. For a  pair $(T_{H_0}, T_{H_1})$ we then considered 
\[\text{min}\{L_1(\widehat{C}_{H_0},T_{H_0}) + L_1(\widehat{C}_{H_1},T_{H_1})\} \text{ for each pair }(\widehat{C}_{H_0},\widehat{C}_{H_1}).\]
Again the classification is successful if the minimum is obtained with $(\widehat{C}_{H_0},\widehat{C}_{H_1})$ and $(T_{H_0}, T_{H_1})$ belonging to the same (subject, activity) class.

Results for 20-fold random subsampling cross-validation are shown on the left in Figure \ref{fig_activities_confusions} for the standard contour, with the overall accuracy of 60\%. For the classification with a different contour we use the standard contour for $H_0$ and the shift contour of Figure \ref{fig_activities_density_contour} for \(H_1\). The results are shown on the right in Figure \ref{fig_activities_confusions}. The shift contour increases lifespans of \(H_1\) features appearing with larger filtration values. Exploring the stem plots (Figure \ref{fig_activities_density_contour}) for different data sets shows that larger filtration values have bars sparsely (some data sets having no bars) and their lengths vary significantly between different classes of data. This observation leads to use the contour emphasizing bars in the larger filtration values, by which the classification accuracy increases to 65\%. Note particularly increase in the accuracy of subject 4. Also noteworthy is that ascendings mainly get confused with ascendings and the same for descendings. These data thus exhibit clearly different character and using an appropriate contour makes this difference more pronounced. 

\begin{figure}[!h]	
	\begin{minipage}{0.99\textwidth}
		\centering
		\includegraphics[scale=0.38,clip,trim=5cm 1.2cm 4cm 0cm]{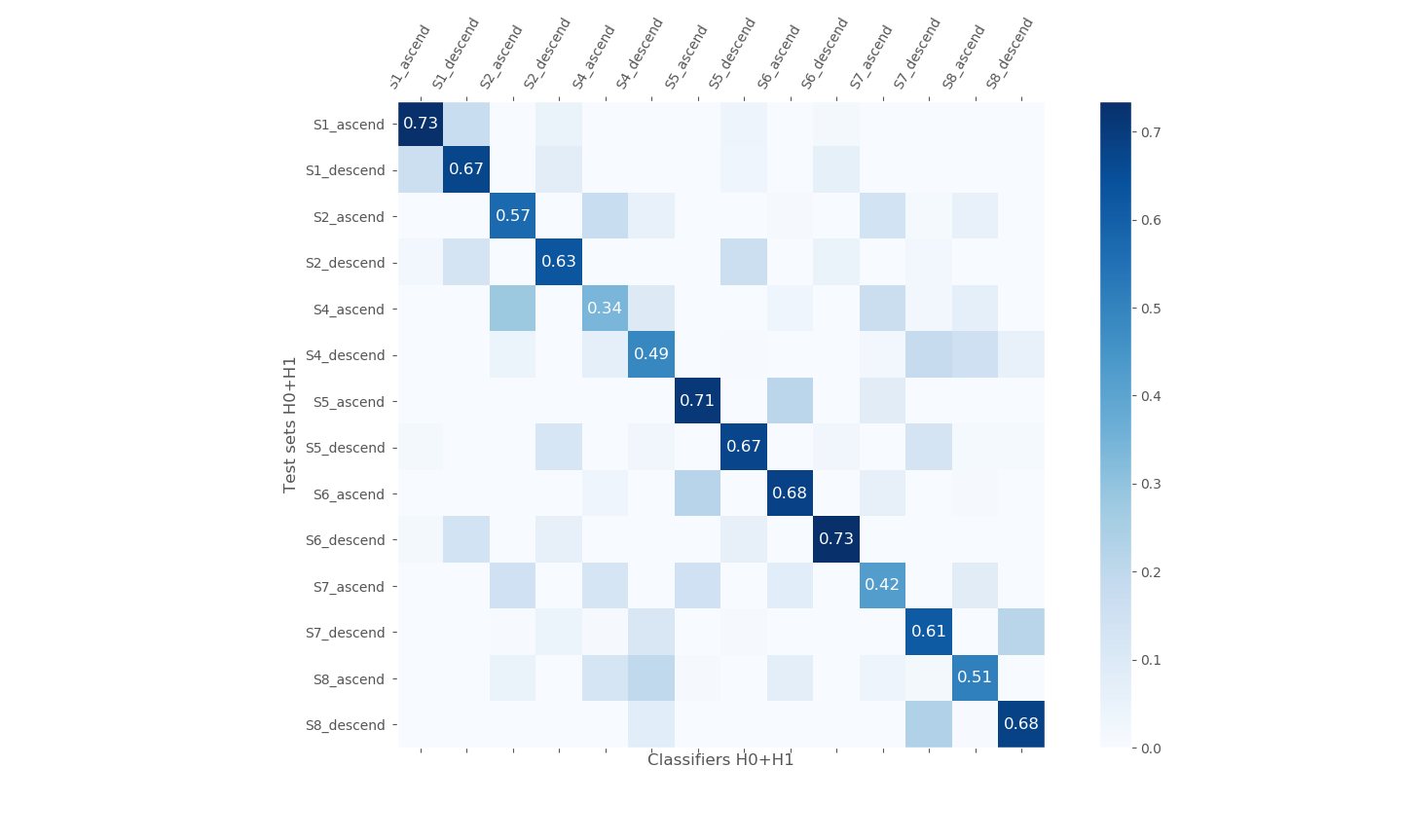}
		\includegraphics[scale=0.38,clip,trim=5cm 1.2cm 4cm 0cm]{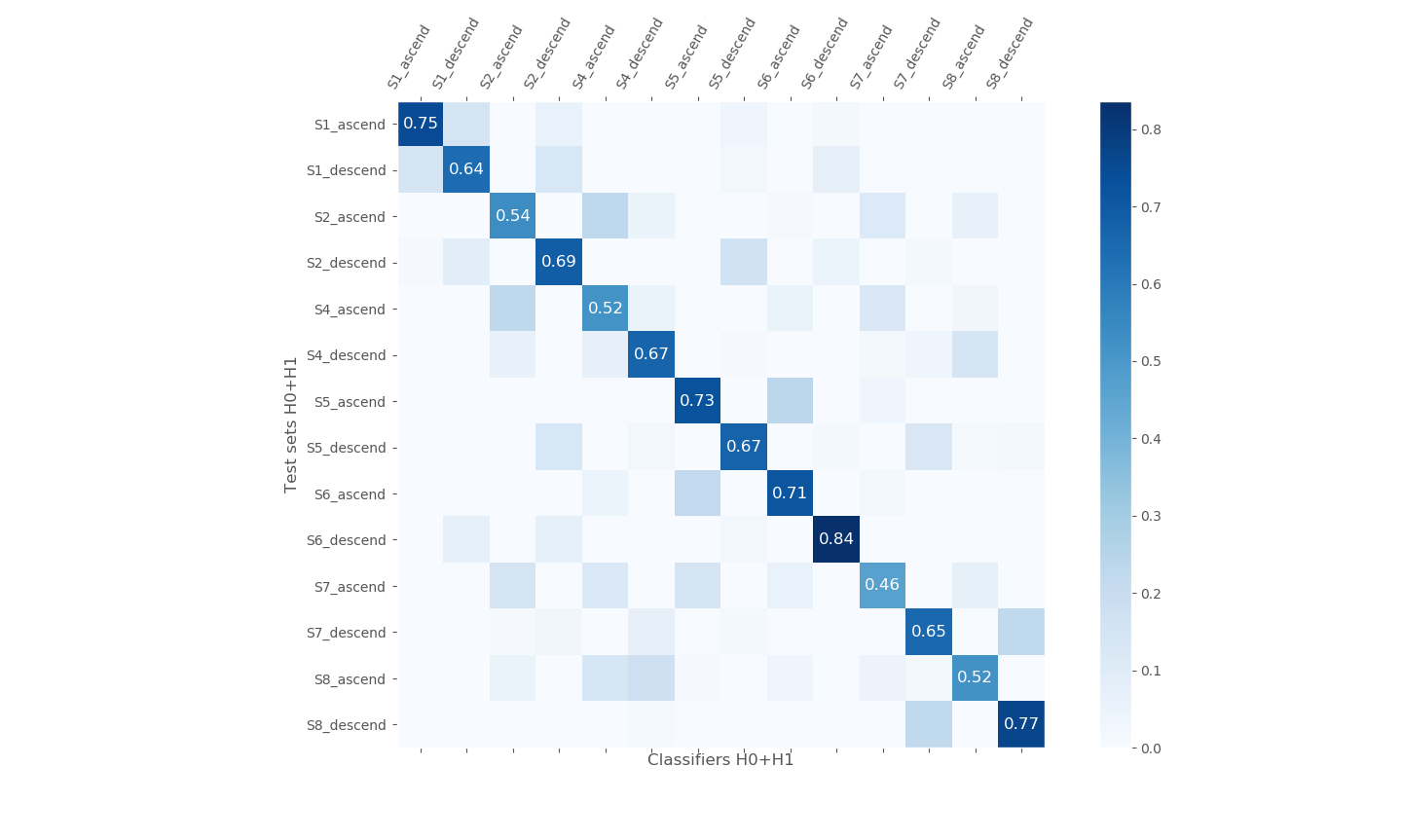}
	\end{minipage}
	\caption{Confusion matrix for the classification of ascending and descending stairs activities with standard contour (left). Confusion matrix for the classification of ascending and descending stairs activities with contour coming from density function of Figure \ref{fig_activities_density_contour} (right).}
	\label{fig_activities_confusions}
\end{figure}

\begin{figure}
	\begin{minipage}{0.99\textwidth}
		\includegraphics[width=0.49\textwidth]{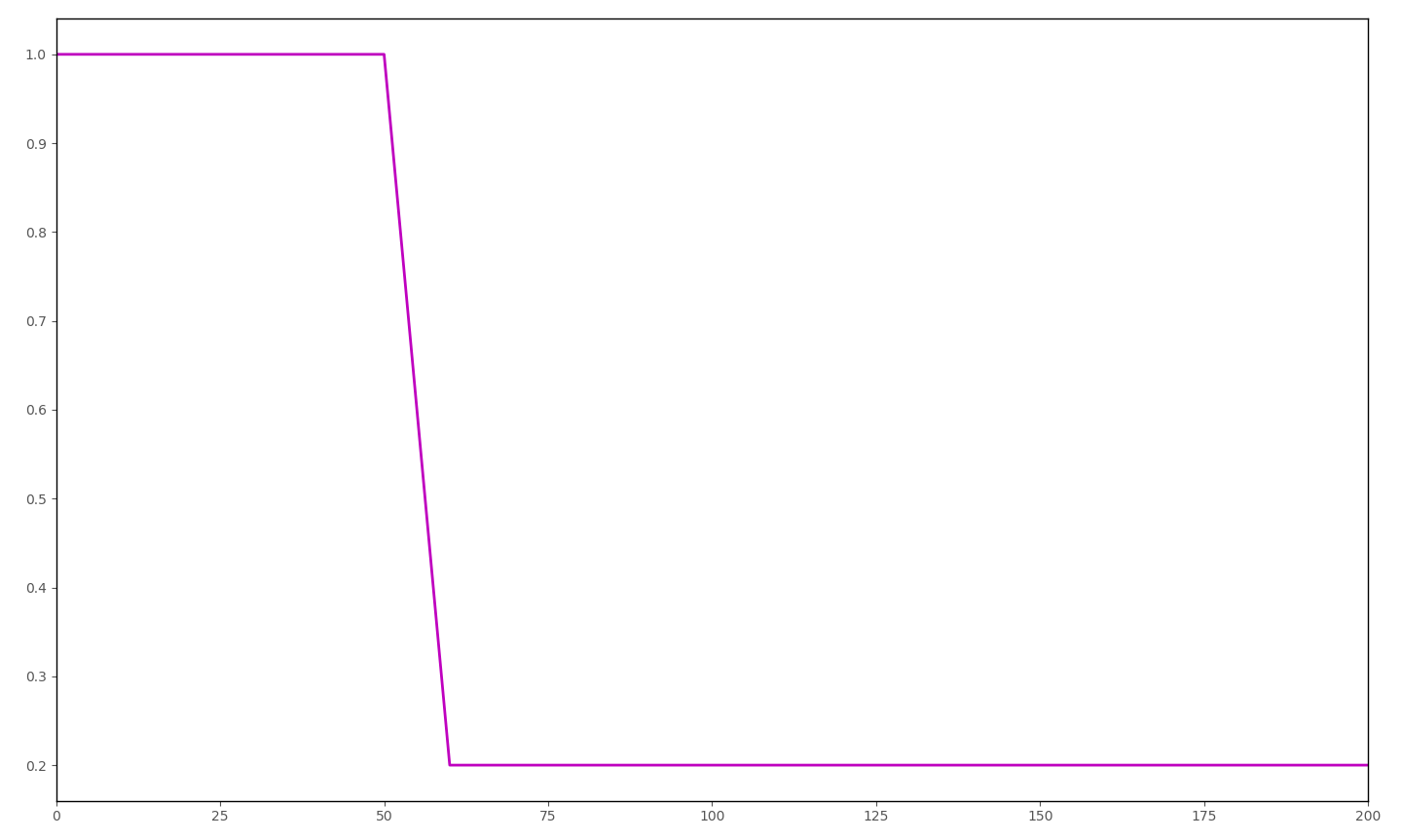}
		\includegraphics[width=0.49\textwidth]{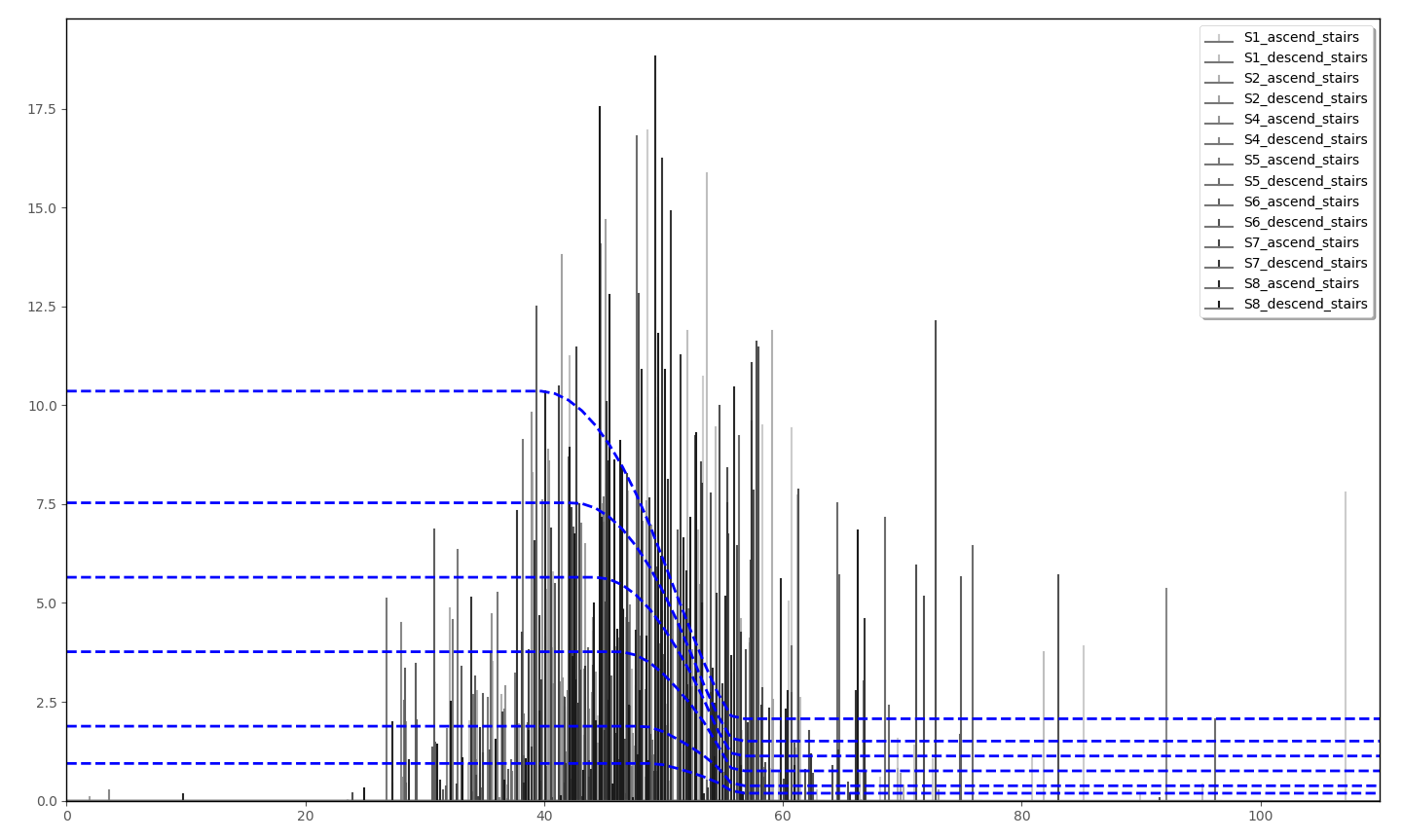}
	\end{minipage}
	\caption{Density function used for $H_1$ stable ranks in the activities classification (left) and contour lines and persistence stem plots for single data sets (right).}
	\label{fig_activities_density_contour}
\end{figure}

\subsection{Choosing the contour}
The examples above illustrate how one does data analysis by choosing a more optimal contour which gives a pseudometric on $\text{Tame}([0,\infty),\text{Vec}_K)$. In the examples this choice was made by visually inspecting stem plots and contour lines. The next step of our pipeline is to automate this process. This is the central reason behind contours and induced metrics: we want to optimize over the space of metrics to find more distinguishing invariants for objects in $\text{Tame}([0,\infty),\text{Vec}_K)$. This leads to optimizing in an appropriate function space since contours arise from density functions (see Section \ref{generalized_persistence and persistence contours}). Another way would be to represent densities by functions whose shape is controlled by few parameters, such as a beta distribution with shape parameters \(\alpha\) and \(\beta\). The optimization then reduces to these parameters.

\appendix
\section{Proofs of Propositions from Section~\ref{sec:hier_stab}}\label{adfdfsgfhd}
\begin{proof}[Proof of Proposition~\ref{prop:Lipschitz}]
	\noindent
	(1):\quad 
	If $d(X,Y)=\infty$, there is nothing to prove. Assume $\epsilon:=d(X,Y)<\infty$. 
	For  $t$ in $[0,\infty)$, there are  inclusions $B(Y,t)\subset B(X,t+\epsilon)$ and
	$B(X,t)\subset B(Y,t+\epsilon)$ which imply $\widehat{R}_d(Y)(t)\geq \widehat{R}_d(X)({t+\epsilon})$ and
	$\widehat{R}_d(X)({t})\geq \widehat{R}_d(Y)({t+\epsilon})$. As this happens for all $t$, we can conclude   
	$\epsilon\geq d_{\bowtie}(\widehat{R}_d(X),\widehat{R}_d(Y))$.
	\smallskip
	
	\noindent
	(2):\quad 
	Using (1), it is enough to prove that, for non-increasing functions $f$ and $g$:
	\[\text{max}\{f(0),g(0)\} d_{\bowtie} (f,g)^{\frac{1}{p}}\geq L_p(f,g).\]
	The  inequality is clear if  $d_{\bowtie}(f,g)=\infty$. Assume there is  $\epsilon$ 
	such that $f(t)\geq g(t+\epsilon)$ and $g(t)\geq f(t+\epsilon)$ for any $t$. 
	This    together with the fact that $f$ and $g$ are non-increasing  imply $h\geq f\geq h_\epsilon$ and $h\geq g\geq h_\epsilon$ where  
	$h=\text{max}\{f,g\}$ and $h_\epsilon$ is the function $t\mapsto h(t+\epsilon)$.  
	The desired inequality is then a consequence of  $h$ being non-increasing and
	the fact   $(b-a)^p\leq b^p-a^p$  for $a\leq b$ in $[0,\infty)$ and $p\geq 1$ which give:
	\[L_p(h,h_\epsilon)^p=\int_0^\infty (h(t)- h_\epsilon(t))^p dt\leq 
	\int_0^\infty h(t)^p- h_\epsilon(t)^p dt=\]
	\[=
	\int_{0}^{\epsilon}h(t)^pdt\leq h(0)^p\epsilon=\text{max}\{f(0),g(0)\}^p\epsilon.\]
\end{proof}

\begin{proof}[Proof of Proposition~\ref{sgdsths}]
	Let  $\{d_\alpha\}_{\alpha\in[0,\infty]}$   be a non-decreasing sequence of pseudometrics on $T$. Choose $\epsilon $ in $(0,\infty)$. For $\alpha$ in  $(0,\infty)$, set $\lfloor{\alpha}/{\epsilon}\rfloor$ to be the biggest natural number not bigger than ${\alpha}/{\epsilon}$. Define $d_\alpha^{\epsilon}:=d_{\lfloor{\alpha}/{\epsilon}\rfloor\epsilon }$ and $d_\infty^{\epsilon}:=d_{\infty}$. In this way, any $\epsilon$ leads to a new non-decreasing sequence $\{d_\alpha^{\epsilon}\}_{\alpha\in[0,\infty]}$ of pseudometrics on $T$. This new sequence is also  non-decreasing. Let $\overline{R}_\epsilon(X)\colon [0,\infty)^2\to [0,\infty)$ be the function  corresponding to this new sequence as defined in~\ref{asfgdsfghs}.(2). Since  $\{d_\alpha^{\epsilon}\}_{\alpha\in[0,\infty]}$ is constant on intervals of the form $[n\epsilon, (n+1)\epsilon)$ where $n$ is a natural number, the function   $\overline{R}_\epsilon(X)\colon [0,\infty)^2\to [0,\infty)$ is  Lebesgue measurable as it is constant  on left closed rectangles that cover  $[0,\infty)^2$. Note that $\overline{R}(X)$ is the limit of  $\overline{R}_\epsilon(X)$ as $\epsilon$ goes to $0$. As a limit of measurable functions, $\overline{R}(X)$ is then  also measurable.
\end{proof}

\begin{proof}[Proof of Proposition~\ref{afgsfhg}]
	Since to prove this proposition one   can use exactly the same   arguments as in the proof of Proposition~\ref{prop:Lipschitz}, we illustrate how to show statement (1) only. 
	If $d_{\infty}(X,Y)=\infty$, then the statement is clear. Assume $\epsilon:=d_{\infty}(X,Y)<\infty$. 
	Since  $\{d_\alpha\}_{\alpha\in[0,\infty]}$ is non-decreasing, for any $(\alpha,t)$ in $[0,\infty)\times [0,\infty)$, we have an inclusion
	$B_{d_{\alpha}}(Y,t)\subset  B_{d_{\alpha}}(X,t+\epsilon)$ which yields
	$\overline{R}(Y)(\alpha,t)\geq \overline{R}(X)(\alpha ,t+\epsilon)$.
	By symmetry  also $\overline{R}(X)(\alpha,t)\geq \overline{R}(Y)(\alpha,t+\epsilon)$, and hence 
	$\epsilon \geq d_{\bowtie}\left(\overline{R}(X),\overline{R}(Y)\right)$.
\end{proof}

\bibliographystyle{plain}
\bibliography{references}
\end{document}